\newtheorem{theorem}{Theorem}[section]
\newtheorem{corollary}[theorem]{Corollary}
\newtheorem{lemma}[theorem]{Lemma}
\newtheorem{proposition}[theorem]{Proposition}
\newtheorem{conjecture}[theorem]{Conjecture}
\newtheorem{question}[theorem]{Question}
\newtheorem{questions}[theorem]{Questions}
\theoremstyle{definition}
\newtheorem{definition}[theorem]{Definition}
\newtheorem{remark}[theorem]{Remark}
\newtheorem{example}[theorem]{Example}
\numberwithin{equation}{section}
\numberwithin{figure}{section}
\newcommand\clp{\mathcal{P}}
\newcommand\D{\ensuremath{\Delta}}        
\newcommand{\one}{1}
\newcommand{\ist}{\ensuremath{[\sigma,\tau]}}
\newcommand{\ost}{\ensuremath{(\sigma,\tau)}}
\newcommand{\iuw}{\ensuremath{[u,w]}}
\newcommand{\ouw}{\ensuremath{(u,w)}}
\newcommand{\mst}{\ensuremath{\mu(\sigma,\tau)}}
\newcommand{\sig}{\sigma}
\newcommand{\alp}{\alpha}
\newcommand\bet{\beta}
\newcommand\gam{\gamma}
\newcommand\del{\delta}
\newcommand{\op}{\oplus}
\newcommand{\om}{\ominus}
\newcommand{\as}{\alp\op\sig}
\newcommand{\at}{\alp\op\tau}
\newcommand{\rev}[1]{#1^r}
\newcommand{\comp}[1]{#1^c}
\newcommand{\rc}[1]{#1^{rc}}
\newcommand{\emb}{\eta}
\newcommand\ra{\rightarrow}
\newcommand{\covers}{\rightarrow}
\newcommand{\coverslabel}[1]{\stackrel{#1}{\longrightarrow}}
\newcommand{\decrease}[1]{#1^-}
\newcommand\sg{\varsigma}
\newcommand\leqpo{\leq_0}
\newcommand\geqpo{\geq_0}
\newcommand{\twocases}{\mu(\sg_m, \tau_m)^+}
\newcommand{\len}[1]{|#1|}
\newcommand{\rk}[1]{|#1|}
\newcommand{\parts}[1]{\ell(#1)}
\newcommand{\cpp}{\mathcal{CP}}
\begin{document}

\begin{frontmatter}

\title{On the topology of the permutation pattern poset}

\author{Peter R.\ W.\ McNamara\fnref{peter,simons}}
\ead{peter.mcnamara@bucknell.edu}
\ead[url]{http://www.facstaff.bucknell.edu/pm040}

\author{Einar Steingr\'imsson\fnref{einar,irf}}
\ead{einar@alum.mit.edu}
\ead[url]{https://personal.cis.strath.ac.uk/einar.steingrimsson}

\fntext[simons]{This work was partially supported by a grant from the Simons Foundation (\#245597 to Peter McNamara).}

\fntext[irf]{Steingr\'imsson was supported by grant no.\ 090038013 from the Icelandic Research Fund.}

\address[peter]{Department of Mathematics, Bucknell University, Lewisburg, PA 17837, USA}

\address[einar]{Department\ of Computer and Information Sciences, University of Strathclyde, Glasgow G1 1XH, UK}

\begin{abstract}
  The set of all permutations, ordered by pattern containment, forms a poset.  This paper presents the first explicit major results on the topology of intervals in this poset.  We show that almost all (open) intervals in this poset have a disconnected subinterval and are thus not shellable.  Nevertheless, there seem to be large classes of intervals that are shellable and thus have the homotopy type of a wedge of spheres.  We prove this to be the case for all intervals of layered permutations that have no disconnected subintervals of rank 3 or more.  We also characterize in a simple way those intervals of layered permutations that are disconnected.  These results carry over to the poset of generalized subword order when the ordering on the underlying alphabet is a rooted forest.  We conjecture that the same applies to intervals of separable permutations, that is, that such an interval is shellable if and only if it has no disconnected subinterval of rank 3 or more.  We also present a simplified version of the recursive formula for the M\"obius function of decomposable permutations given by Burstein et al.\ \cite{mob-sep}.
\end{abstract}

\begin{keyword}
pattern poset \sep shellable \sep disconnected \sep layered permutations \sep generalized subword order \sep M\"obius function.

\MSC[2010] primary 05E45; secondary 05A05 \sep 06A07 \sep 52B22 \sep 55P15

\end{keyword}

\end{frontmatter}

\section{Introduction}\label{sec-intro}

An occurrence of a pattern $p$ in a permutation $\pi$ is a subsequence of $\pi$ whose letters appear in the same relative order of size as those in $p$.  For example, the permutation 416325 contains two occurrences of the pattern 231, in 463 and 462.  The origin of the study of permutation patterns can be traced back a long way.  In the 1960s and 70s the number of permutations of length $n$ avoiding (having no occurrence of) any one of the six patterns of length 3 was determined by Knuth \cite[ Exercise~2.2.1.5]{knuth1} and Rogers \cite{rogers-asc-seqs}.  In all of these cases, which are easily seen to fall into two equivalence classes, the numbers in question turn out to be the $n$-th Catalan number.  In a seminal 1985 paper, Simion and Schmidt \cite{simion-schmidt} then did the first systematic study of pattern avoidance, and established, among other things, the number of permutations avoiding any given set of patterns of length 3.  In the last two decades this research area has grown steadily, and explosively in recent years, with several different directions emerging.  Also, many connections to other branches of combinatorics, other mathematics, physics and biology have been developed, in addition to the strong ties to theoretical computer science, in which pattern research also has roots.  One of the early such connections was established in 1990 by Lakshmibai and Sandhya \cite{lakshmibai-sandhya}, who showed that a Schubert variety $X_\pi$ is smooth if and only if $\pi$ avoids both 4231 and 3412.  For a recent comprehensive survey of pattern research see \cite{kitaev-book}, and \cite{steingrimsson-open-problems} for an overview of the latest developments.

It is easy to see that pattern containment defines a poset (partially ordered set) $\clp$ on the set of all permutations of length $n$ for all $n>0$.  This poset is the underlying object of all studies of pattern avoidance and containment.  A classical question about any combinatorially defined poset is what its M\"obius function is, and in \cite{wilf}, Wilf asked what can be said about the M\"obius function of $\clp$.  A generalization of that question concerns the topology of the (order complexes of) intervals in $\clp$, since the M\"obius function of an interval $I=[a,b]$ in $\clp$ equals the reduced Euler characteristic of the topological space determined by the order complex $\D(I)$, whose faces are the chains of the open interval $(a,b)$.  In particular, we would like to know the homology and the homotopy type of intervals in $\clp$.

The first results on the M\"obius function of intervals of $\clp$ were obtained by Sagan and Vatter \cite{sagan-vatter}, who used discrete Morse theory to compute the M\"obius function for the poset of layered permutations; as they pointed out, this poset is easily seen to be isomorphic to a certain poset they studied of compositions of an integer.  Later results about the M\"obius function of $\clp$ have been obtained by Steingr\'imsson and Tenner \cite{ste-tenner} and by Burstein et al.\ \cite{mob-sep}, the latter of which gave an effective formula for the M\"obius function of intervals of separable permutations (those avoiding both of the patterns 2413 and 3142) and reduced the computation for decomposable permutations (those non-trivially expressible as direct sums) to that for indecomposable ones.  Recently, Smith \cite{smith-shelling-fixed-des} (see also \cite{smith-mobius-one-descent}) obtained the first systematic results for several classes of intervals of indecomposable permutations, including those intervals $[1,\pi]$ where $\pi$ is any permutation with exactly one descent.

Although the techniques employed by Sagan and Vatter \cite{sagan-vatter} are frequently used to obtain results about the homotopy type of the intervals studied, they  did not present such results.  Later, in a paper generalizing the results in \cite{sagan-vatter} and those of Bj\"orner \cite{bjorner-subword} and Tomie \cite{tomie-chebyshev},  McNamara and Sagan \cite{mcnamara-sagan} computed the M\"obius function of generalized subword order, using discrete Morse theory, and also determined the homotopy type of all intervals whose underlying poset has rank at most 1.  That, however, does not encompass the case of layered permutations (or any intervals in $\clp$), since the underlying poset for layered permutations consists of the positive integers, under their usual total ordering.

In this paper we present the first explicit major results on the topology of intervals in $\clp$.  However, this only scratches the surface; the poset $\clp$ is clearly very rich in terms of the variety of its intervals and their topology.  Nevertheless, we hope that the results presented here break the ground for further progress.  In fact, this is already happening.  Recently, after the present paper first appeared as a preprint, Jason Smith \cite{smith-shelling-fixed-des} found further results on the topology of intervals in $\clp$, which we describe below.  Although a completely general characterization of the topology of intervals in this poset may be impossible, it seems warranted, given the results so far and the already more substantial results on the M\"obius function, to hope for comprehensive understanding.  That, in turn, is likely to shed light on various other aspects of the study of permutation patterns, as this poset is a fundamental object for all such studies.  

As is conventional in topological combinatorics, we will say that $I$ has a property if the topological space determined by $\D(I)$ has that property.  As is so often the case, our results on the topology of intervals are mostly based on showing that they are shellable.  This implies that these intervals have the homotopy type of a wedge of spheres, where all the spheres are of the top dimension, that is, the same dimension as $\D(I)$, and the homology is thus only in the top dimension.  In that case, the number of spheres equals, up to a sign depending only on rank, the M\"obius function of the interval.

We first characterize those intervals that are disconnected, since an interval with a disconnected subinterval of rank at least 3 is certainly not shellable.  An example of a disconnected interval is given in Figure~\ref{fig-1342-1342675}.
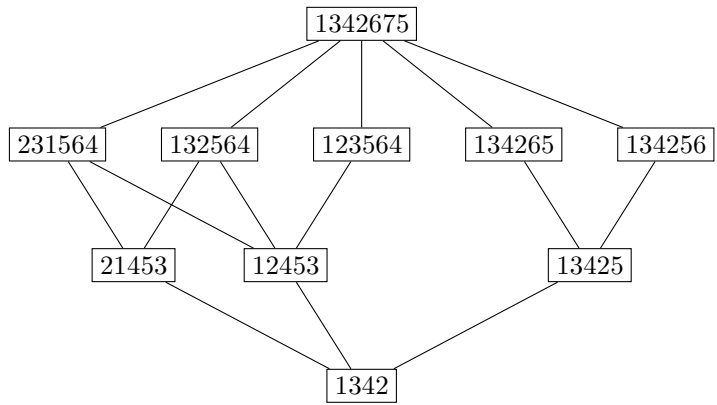
\begin{figure}[htbp]
\begin{center}
\begin{tikzpicture}[scale=2.0]
\tikzstyle{every node}=[rounded corners=0pt, inner sep=3pt]; 
\draw (2,0) node[draw] (a1) {1342};
\draw (0.5,0.8) node[draw] (b1) {21453};
\draw (1.5,0.8) node[draw] (b2) {12453};
\draw (3.5,0.8) node[draw] (b3) {13425};
\draw (0,1.6) node[draw] (c1) {231564};
\draw (1,1.6) node[draw] (c2) {132564};
\draw (2,1.6) node[draw] (c3) {123564};
\draw (3,1.6) node[draw] (c4) {134265};
\draw (4,1.6) node[draw] (c5) {134256};
\draw (2,2.4) node[draw] (d1) {1342675};
\draw (a1) -- (b1);
\draw (a1) -- (b2);
\draw (a1) -- (b3);
\draw (b1) -- (c1);
\draw (b1) -- (c2);
\draw (b2) -- (c1);
\draw (b2) -- (c2);
\draw (b2) -- (c3);
\draw (b3) -- (c4);
\draw (b3) -- (c5);
\draw (c1) -- (d1);
\draw (c2) -- (d1);
\draw (c3) -- (d1);
\draw (c4) -- (d1);
\draw (c5) -- (d1);
\end{tikzpicture}
\end{center}
\caption{The disconnected interval [1342, 1342675]}
\label{fig-1342-1342675}
\end{figure}
If a disconnected subinterval has rank at least 3 we qualify it as being \emph{non-trivial}, since such a subinterval prevents an interval containing it from being shellable, as shown by Bj\"orner \cite[Prop.~4.2]{bjorner}.  (Note that an interval of rank 2 that is not a chain is disconnected, but shellable since its order complex is 0-dimensional.)  It turns out that ``almost all'' intervals in $\clp$ have non-trivial disconnected subintervals and are thus not shellable. More precisely, given any permutation $\sig$, the probability that the interval $\ist$ has such a disconnected subinterval, for a randomly chosen permutation $\tau$ of length $n$, goes to one as $n$ goes to infinity.    Shellable intervals are thus, in this sense, an exception to the general rule.  This seems to be just one manifestation of a more general property of $\clp$: it seems to be very hard to get a grip on its generic intervals.  Even so, there are various substantial classes of intervals where results have been pried out in recent years, and almost certainly more is to come.

We give a very simple characterization of those intervals of layered permutations that 
are disconnected.  This allows us to determine which intervals of layered permutations have no non-trivial disconnected subintervals and, in contrast to statements in the previous paragraph, we show that all such intervals are shellable.  We conjecture that the same is true for intervals of separable permutations, that is, that the only obstruction to shellability of such an interval is a non-trivial disconnected subinterval.

We also present a unified (and simplified) version of the two fundamental propositions in \cite[Propositions~1 and~2]{mob-sep}, which reduce the computation of the M\"obius function for decomposable permutations to a computation involving their components.

As mentioned above, Jason Smith \cite{smith-shelling-fixed-des} has recently found new results on shellability of intervals in $\clp$.   Namely, he shows that intervals of permutations that all have the same number of descents are shellable.  He also conjectures that if $\pi$ has exactly one descent then the interval $[1,\pi]$ is shellable if and only if $\pi$ avoids the two patterns 456123 and 356124.  Containing either of these patterns implies the interval has a disconnected subinterval of rank at least three and thus cannot be shellable.

The paper is organized as follows.  In Section \ref{sec-prelims} we collect some necessary definitions and observations.  In Section \ref{sec-chain} we explain which intervals in $\clp$ are chains.  In Section \ref{non-shell} we show that almost all intervals in $\clp$ are non-shellable, more precisely that for a fixed $\sig$ the proportion of intervals $\ist$ that have non-trivial disconnected subintervals, and are thus non-shellable, goes to one as the length of $\tau$ goes to infinity.  In Section \ref{disc-intervals} we give a general characterization of disconnected intervals in $\clp$. We also show that disconnectivity is preserved under certain operations on intervals and, in Section \ref{sec-iso}, that some of those operations actually give intervals isomorphic to the original ones.  In Section \ref{sec-layered} we give necessary and sufficient conditions for an interval of layered permutations to be disconnected, and show that having no non-trivial disconnected subintervals implies (and hence is equivalent to) shellability.  In fact, our results here apply to a more general situation, namely to generalized subword order (see, for example, \cite{mcnamara-sagan, sagan-vatter}) where the underlying poset is a rooted forest.  In Section \ref{mob-decomp} we give a unified (and simplified) version of the two fundamental recursive formulas in \cite{mob-sep} for the M\"obius function of intervals $\ist$ where $\tau$ is decomposable.  Finally, in Section \ref{sec-open}, we mention some open problems and questions.

\section{Preliminaries}\label{sec-prelims}

In this section, we establish terminology and notation that we will use repeatedly.

The letters of all our permutations $\pi$ are positive integers, and we call the number of letters in $\pi$ the \emph{length} of $\pi$, denoted $\len{\pi}$.  We will use $\emptyset$ to denote the unique permutation of length 0.  As mentioned above, the definition of the partial order in the poset $\clp$ refers only to the relative order of size of letters in permutations. Thus, deleting different letters from a given permutation can result in the same element of $\clp$, such as when we delete either the 2 or the 3 from 416325.  The resulting permutations, 41635 and 41625, are said to be order \emph{isomorphic}, and they have the same \emph{standard form}, namely 31524, since 31524 is the (only) permutation of $\{1,2,3,4,5\}$ whose letters appear in the same order of size as in 41635 and 41625.  The map that takes a permutation to its standard form is referred to as \emph{flattening}.

The \emph{direct sum} of two permutations $\alp$ and $\bet$, denoted $\alp\op\bet$, is the concatenation of $\alp$ and $\bet'$, where $\bet'$ is obtained from $\bet$ by adding to each of its letters the largest letter of $\alp$.  The \emph{skew sum} of $\alp$ and $\bet$, denoted $\alp\ominus\bet$, is the concatenation of $\alp'$ and $\bet$, where $\alp'$ is obtained from $\alp$ by adding to each of its letters the largest letter of $\bet$.  In particular, if $\alp$ and $\bet$ are in standard form, then so are $\alp\op\bet$ and $\alp\ominus\bet$.  For example, if $\alp=213$ and $\bet=3142$, then $\alp\op\bet=2136475$ and $\alp\ominus\bet=6573142$.  We say that a permutation is \emph{decomposable} (respectively \emph{skew decomposable}) if it is the direct sum (resp.\ skew sum) of two nonempty permutations, otherwise it is \emph{indecomposable} (resp.\ \emph{skew indecomposable}).  Clearly, every permutation has a unique \emph{finest} decomposition (resp.\ skew decomposition), that is, a decomposition (resp.\ skew decomposition) into the maximum number of indecomposable (resp.\ skew indecomposable) components.  Note that a permutation cannot be both decomposable and skew decomposable, so every permutation is either indecomposable or skew indecomposable (or both).

For permutations $\sig \leq \tau$ with $\sig$ of length $k$, an \emph{embedding $\emb$ of $\sig$ in $\tau$} is a sequence 
\[
\emb = (0,\ldots,0,\sig(1),0,\ldots,0,\sig(2),0,\ldots,0,\sig(k), 0, \ldots, 0)
\]
of length $\len{\tau}$ so that the nonzero positions in $\emb$ are the positions of an occurrence of $\sig$ in $\tau$.  For example, $21300$, $21030$ and $21003$ are the embeddings of $213$ in $21453$.
A key concept is that every maximal chain from $\tau$ to $\sig$ corresponds to at least one embedding of $\sig$ in $\tau$: starting at $\tau$, each covering relation corresponds to ``zeroing out'' a not necessarily unique letter of $\tau$.  For example, with $\covers$ denoting a covering relation, the chain
\[
21453 \covers 2134 \covers 213
\]
corresponds to the embeddings $21300$ and $21030$ because of the following two choices for zeroing out letters:
\[
21453 \covers 21340 \covers 21300,
\]
\[
21453 \covers 21340 \covers 21030.
\]

To every such embedding $\emb$, we define its \emph{zero set} to be the set of positions that are zero.  Given a permutation $\tau$ and a subset $Z$ of $\{1, \ldots, \len{\tau}\}$, let $\tau-Z$ denote the permutation obtained by deleting the letters of $\tau$ in positions in $Z$ and then flattening.  We will often think of elements of $[\sig,\tau]$ as being of the form $\tau - Z$.  

As always in posets, the closed interval $\ist$ in $\clp$ is the set $\{\pi\;|\;\sig\le\pi\le\tau\}$, and the open interval $(\sig,\tau)$ (the \emph{interior of $\ist$}) is the set $\{\pi\;|\;\sig<\pi<\tau\}$, where ``$<$'' and ``$\le$'' have the usual meaning.  When we talk about topological properties of an interval $I=\ist$ such as connectedness and shellability (to be discussed later), the interval inherits these properties from the topological space determined by the \emph{order complex} of the open interval $(\sig,\tau)$, that is, from the simplicial complex whose faces are the chains of $(\sig,\tau)$.  We denote this order complex by $\D(I)$ or by $\D(\sig,\tau)$.  The \emph{rank} of a closed interval $\ist$ is one less than the maximum possible number of elements in a chain $\sig<\pi_1<\pi_2<\cdots<\pi_k<\tau$ and the rank of an element $\pi\in\ist$ is the rank of $[\sig,\pi]$.  When we talk about the rank of a subinterval $[\sig',\tau']$ or $(\sig',\tau')$, we always mean the rank of the closed interval, although when talking about topological properties of such an interval these are determined by $(\sig,\tau)$, as mentioned above.

We do not state the definition of shellability here since we have already stated the facts we need about shellability: that shellability of an interval completely determines its topology (a wedge of spheres of the top dimension), that disconnected intervals of rank at least 3 are not shellable, and that an interval of a poset is not shellable if it contains a subinterval that is not shellable.  Our main technique for showing shellability will be CL-shellability, for which we give the necessary details in Section~\ref{sec-layered}.  For background on these concepts we refer the reader to \cite{wachs-lectures}.

\section{When is an interval a chain?}\label{sec-chain}

We begin with a classification of those intervals that are chains since it is an obvious question that is not too difficult to answer.  To give the answer, we will need two definitions and a lemma, which will also be useful later (in the proof of Theorem~\ref{thm-augmentation}).

\begin{definition}
A \emph{run} of a permutation $\tau$ is a contiguous subsequence of letters of $\tau$ of the form $(a, a+1, a+2, \ldots, a+k)$ or $(a, a-1, a-2, \ldots, a-k)$.
\end{definition}

For example, 543126 contains disjoint runs of lengths 3, 2 and 1 in that order.
The key to classifying intervals that are chains will be Lemma~\ref{lem-samerun}.  It has been proved earlier by, for example, Homberger \cite{homberger} and Sagan \cite{sagan} but, for the sake of completeness, we give a proof here.

\begin{lemma}\label{lem-samerun}
Positions $i$ and $j$ of a permutation $\tau$ satisfy $\tau - \{i\} = \tau - \{j\}$ if and only if $i$ and $j$ are positions in the same run of $\tau$.  
\end{lemma}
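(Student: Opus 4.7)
The plan is to establish the two directions separately, relying on a careful analysis of how the flattening map behaves when different letters are deleted.

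\textbf{Forward direction} (same run implies $\tau - \{i\} = \tau - \{j\}$): I would first handle the case of adjacent positions in a run, where $j = i+1$ and $\tau(i+1) = \tau(i) \pm 1$; say $\tau(i) = a$ and $\tau(i+1) = a+1$. Comparing $\tau - \{i\}$ and $\tau - \{i+1\}$ position by position, both carry the same underlying letter at each position other than the shifted position $i$, and since every such letter is neither $a$ nor $a+1$, it flattens identically whether $a$ or $a+1$ is removed. At the shifted position $i$ itself, one deletion carries $a+1$ flattened after removing $a$ (giving $a$), and the other carries $a$ flattened after removing $a+1$ (also giving $a$). The general case of two (not necessarily adjacent) positions in a common run then follows by transitivity along the run.

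\textbf{Reverse direction} (equality implies same run): Assume $i < j$, and set $a = \tau(i)$, $b = \tau(j)$. Let $\phi_c$ denote the flattening map that sends $x$ to $x$ if $x < c$ and to $x - 1$ if $x > c$. Writing both $\tau - \{i\}$ and $\tau - \{j\}$ as explicit length-$(n-1)$ sequences and equating them at each position, the nontrivial constraint comes from positions $k$ with $i \le k \le j-1$, where one reads
\[
\phi_i(\tau(k+1)) = \phi_j(\tau(k)).
\]
Assume without loss of generality $a < b$ (the case $a > b$ is symmetric and produces a decreasing run). Induction on $m$ then shows $\tau(i+m) = a + m$ for $m = 0, 1, \ldots, j-i$: the inductive step applies the displayed equation at position $k = i+m$, where $\phi_j(a+m) = a+m$ yields $\phi_i(\tau(i+m+1)) = a+m$ and hence $\tau(i+m+1) = a+m+1$. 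Setting $m = j-i$ gives $b = a + (j-i)$ and exhibits $\tau(i), \tau(i+1), \ldots, \tau(j)$ as the increasing run $a, a+1, \ldots, b$.

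\textbf{Main obstacle:} The delicate point in the induction is verifying $\phi_j(a+m) = a + m$ at each step, which requires $a + m < b$ throughout the range $0 \le m \le j - i - 1$. This is where the permutation property must be invoked: if instead $a + m = b$ for some such $m$, then the previously established $\tau(i + m) = a + m = b = \tau(j)$ would exhibit two distinct positions of $\tau$ carrying the same value, a contradiction. With this observation in hand, the induction runs without further difficulty, and the equalities at positions outside $[i,j]$ serve merely as a consistency check. Everything else amounts to bookkeeping with the two flattening maps $\phi_i$ and $\phi_j$.
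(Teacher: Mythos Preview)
Your argument is correct and follows essentially the same inductive idea as the paper's proof: compare the entries at position $i$ of the two deletions to force $\tau(i+1) = \tau(i) \pm 1$, then step through the interval $[i,j]$. The paper phrases this as induction on $j-i$ (after one step it reduces to the pair $(i+1,j)$, since $\tau - \{i+1\} = \tau - \{j\}$), while you induct on $m$ to build the run values $\tau(i+m) = a+m$ directly; these are two packagings of the same computation, yours being the more explicit unrolling.

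One notational slip to fix: you define $\phi_c$ with $c$ a \emph{value}, but then write $\phi_i$ and $\phi_j$ with the \emph{positions} $i,j$ as subscripts. Throughout the reverse direction these should read $\phi_a$ and $\phi_b$ (with $a = \tau(i)$, $b = \tau(j)$), so that the displayed identity becomes $\phi_a(\tau(k+1)) = \phi_b(\tau(k))$ and the inductive step reads $\phi_b(a+m) = a+m$, $\phi_a(\tau(i+m+1)) = a+m$.
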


\begin{proof}
  The ``if'' direction is straightforward to check.  For the ``only if'' direction suppose, without loss of generality, that $i<j$.  We consider the case when $\tau(i) < \tau(j)$, with the other case being similar.  Since the $i$-th letters of $\tau - \{i\}$ and $\tau - \{j\}$ are equal, we get that either $\tau(i+1)=\tau(i)$ or $\tau(i+1)-1=\tau(i)$.  The former case is impossible, so we get that positions $i$ and $i+1$ are in the same run.  Now we know that $\tau-\{i+1\} = \tau-\{j\}$, and $\tau(i+1) \leq \tau(j)$ with equality if and only if $i+1=j$.  Therefore, by induction on $j-i$, we conclude that $i$ and $j$ are positions in the same run of $\tau$.
\end{proof}

We now give a definition that allows for a simple characterization of those intervals that are chains.

\begin{definition}
  If $\ist$ is an interval and $\tau=a_1a_2\ldots a_n$ we say that $a_i$ is \emph{removable (with respect to $\sig$)} if removing $a_i$ from $\tau$ yields a permutation in $\ist$.
\end{definition}

Equivalently, $a_i$ is removable if there is an occurrence of $\sig$ in $\tau$ that does not contain~$a_i$.

\begin{proposition}\label{prop-chain}
The interval $\ist$ is a chain if and only if the set of all removable letters of $\tau$ forms a single run.
\end{proposition}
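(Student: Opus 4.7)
The approach is to prove both directions separately.  The forward direction uses Lemma~\ref{lem-samerun} almost directly, while the reverse direction requires induction on the rank $|\tau|-|\sigma|$.

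For $(\Rightarrow)$, assume $[\sigma,\tau]$ is a chain, so $\tau$ has a unique cover in the interval.  The covers of $\tau$ in $[\sigma,\tau]$ correspond to the distinct permutations of the form $\tau-\{i\}$ for removable positions $i$.  By Lemma~\ref{lem-samerun}, $\tau-\{i\}=\tau-\{j\}$ if and only if $i$ and $j$ lie in the same run of $\tau$.  Moreover, removability is constant along each run of $\tau$, since whether $\tau-\{i\}$ lies in $[\sigma,\tau]$ depends only on the resulting permutation, which is the same for all positions in the run.  Hence the removable positions form a union of complete runs, and the number of distinct covers of $\tau$ equals the number of runs in that union.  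Uniqueness of the cover therefore forces this number to be~$1$.

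For $(\Leftarrow)$, I would induct on $|\tau|-|\sigma|$.  The base case $\sigma=\tau$ is immediate.  Otherwise let $R$ be the single run of removable letters, say at positions $i,i+1,\ldots,i+k$ with (without loss of generality ascending) letters $a,a+1,\ldots,a+k$, and let $\tau':=\tau-\{i+k\}$ be the unique cover.  A standard saturated-chain argument (every element strictly below $\tau$ in the interval is below some cover of $\tau$, hence below $\tau'$) shows $[\sigma,\tau]=[\sigma,\tau']\cup\{\tau\}$, so it suffices to prove $[\sigma,\tau']$ is a chain.  If $\tau'=\sigma$, this is immediate; otherwise, by the inductive hypothesis, it suffices to show that the removables of $\tau'$ again form a single run.

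To that end, I would first note that any non-removable position $p$ of $\tau$ remains non-removable in $\tau'$: since $\sigma\not\le\tau-\{p\}$ and $\tau'-\{p\}=\tau-\{i+k,p\}$ is obtained from $\tau-\{p\}$ by deleting one further letter, we have $\sigma\not\le\tau'-\{p\}$ as well.  Thus the removables of $\tau'$ lie among the positions corresponding to $R\setminus\{i+k\}$, which in $\tau'$ occupy positions $i,\ldots,i+k-1$ with unchanged letters $a,\ldots,a+k-1$ (each strictly less than $a+k$, so unaffected by flattening) and therefore all sit inside a single run of $\tau'$.  Since $\tau'>\sigma$, at least one of these $k$ positions is removable in $\tau'$, whereupon Lemma~\ref{lem-samerun} applied to $\tau'$ forces all $k$ of them to be removable.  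Hence the removables of $\tau'$ form a single run, the inductive hypothesis applies, and $[\sigma,\tau]$ is a chain.

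The main obstacle is this inductive step: showing that the single-run-removability property is preserved upon passing to the unique cover.  The delicate point is combining the monotonicity of pattern non-containment (to rule out new removables outside the shrinking run) with two applications of Lemma~\ref{lem-samerun}, once to $\tau$ and once to $\tau'$, to localize the removable letters of $\tau'$ to precisely the shrunken run.
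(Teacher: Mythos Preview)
Your proof is correct and follows essentially the same approach as the paper: both directions rest on Lemma~\ref{lem-samerun}, and the converse is handled by induction (the paper inducts on the length of the run of removable letters, you on $|\tau|-|\sigma|$, but the inductive step---pass to the unique cover and check that the single-run property persists---is the same). Your write-up simply fills in the details the paper leaves implicit, in particular the verification that non-removable positions of $\tau$ stay non-removable in $\tau'$ and that the surviving run remains a run after flattening.
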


For example, $[21, 51234]$ is a chain of length 3, the removable letters being 1,2,3,4.

\begin{proof}[Proof of Proposition~\ref{prop-chain}]
Suppose $\ist$ is a chain, so removing any particular removable letter from $\tau$ yields the same permutation.  By Lemma~\ref{lem-samerun}, we get that all the removable letters must be in the same run.   The converse follows from Lemma~\ref{lem-samerun} by induction on the length of the run of removable letters.
\end{proof}

Using similar ideas, we can easily determine the complete structure of intervals of rank 2.  The result below follows from Lemma~\ref{lem-samerun}.  Note that if a run contains one removable letter then all its letters are removable.

\begin{proposition}\label{prop-ranktwo}
An interval $\ist$ of rank 2 has exactly $k$ elements of rank 1, where $k$ is the number of runs of $\tau$ consisting of removable letters.  
\end{proposition}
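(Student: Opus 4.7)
The plan is to reduce the statement directly to Lemma~\ref{lem-samerun} by reinterpreting ``rank 1 element of $\ist$'' as ``distinct permutation of the form $\tau-\{i\}$ with $\tau(i)$ removable''.

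First, I would observe that the rank 1 elements of $\ist$ are precisely the permutations $\pi$ satisfying $\sig\leq\pi$ and $\len{\pi}=\len{\tau}-1$, since the rank of $\ist$ is $\len{\tau}-\len{\sig}=2$ and each covering relation in $\clp$ decreases length by one. Equivalently, the rank 1 elements are exactly the distinct permutations $\tau-\{i\}$ as $i$ ranges over positions with $\tau(i)$ removable. Counting the rank 1 elements of $\ist$ thus amounts to counting the number of distinct values $\tau-\{i\}$ where $i$ runs over the positions of removable letters of $\tau$.

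Next, I would apply Lemma~\ref{lem-samerun}, which tells us that $\tau-\{i\}=\tau-\{j\}$ if and only if $i$ and $j$ lie in the same run of $\tau$. Combined with the remark preceding the proposition --- that if one letter of a run is removable then all letters of the run are removable, an immediate consequence of Lemma~\ref{lem-samerun} itself --- this shows that the set of removable positions decomposes into a disjoint union of complete runs, and that two removable positions give the same rank 1 element exactly when they lie in the same such run. Hence the distinct rank 1 elements of $\ist$ are in bijection with the runs of $\tau$ consisting entirely of removable letters, yielding the count $k$.

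There is no real obstacle here; the only delicate point is the \emph{well-definedness} of ``runs of removable letters'', which must be justified via the observation above before one can legitimately speak of runs that ``consist of removable letters''. Once that is noted, the proposition is essentially a one-line corollary of Lemma~\ref{lem-samerun}.
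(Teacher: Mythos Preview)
Your proof is correct and follows exactly the approach the paper intends: the paper simply states that the result follows from Lemma~\ref{lem-samerun} together with the observation that a run containing one removable letter consists entirely of removable letters, and you have fleshed out precisely that argument.
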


It will be convenient and sensible to restrict some of our later results to intervals of rank at least 3; because of Proposition~\ref{prop-ranktwo} we can do so in the knowledge that rank 2 intervals are well understood.

\section{Almost all intervals are non-shellable}\label{non-shell}

In studying examples of intervals in $\clp$, one quickly realizes that their structure is not simple in general.  One cause for this is stated in the title of this section and is made precise by the results below.  We begin with two preliminary lemmas, the first of which is a straightforward observation.

\begin{lemma}\label{lem-two-sep-occ}
Let $\sig$ be a permutation of length $k\ge2$.  If $\sig$ is indecomposable then $\sig\op\sig$ contains exactly two occurrences of $\sig$.  If $\sig$ is skew indecomposable then $\sig\om\sig$ contains exactly two occurrences of $\sig$.  In either case, the two occurrences consist necessarily of the first and second component, respectively, in the (skew) sum.
\end{lemma}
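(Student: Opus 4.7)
The plan is to classify every occurrence of $\sig$ in $\tau := \sig \op \sig$ by how many of its $k$ letters come from each summand. An occurrence selects $k$ positions from the $2k$ available, say $j$ from the first copy of $\sig$ and $k-j$ from the second, with $0 \le j \le k$. Since each copy has length exactly $k$, the extremal cases $j = k$ and $j = 0$ force the occurrence to use all positions of the first copy or all positions of the second copy, respectively; these are the two ``obvious'' occurrences, and they are distinct because their position sets are disjoint.

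To show these are the only occurrences, I would rule out $0 < j < k$ by contradiction. By definition of $\op$, every letter in the first copy of $\sig$ is strictly smaller than every letter in the second, and all letters of the first copy appear before all letters of the second. So any subsequence with $j$ letters drawn from the first copy and $k - j$ from the second flattens to a permutation of the form $\alp \op \bet$ with $\len{\alp} = j \geq 1$ and $\len{\bet} = k - j \geq 1$. But this flattening must equal $\sig$, contradicting the assumption that $\sig$ is indecomposable. Hence $j \in \{0, k\}$, and there are exactly two occurrences, matching the two components of the direct sum as claimed.

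The skew indecomposable statement for $\sig \om \sig$ is entirely parallel: in $\sig \om \sig$ every letter of the first copy is strictly greater than (but still appears before) every letter of the second, so a mixed occurrence with $0 < j < k$ would flatten to a nontrivial skew sum $\alp \om \bet$, contradicting skew indecomposability of $\sig$. Again the only remaining occurrences are the two components.

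There is no real obstacle in this proof; it is a direct application of the definitions of $\op$, $\om$, and (skew) indecomposability. The only points requiring attention are verifying that the two candidate occurrences are genuinely distinct (immediate from their disjoint position sets) and handling the indecomposable and skew indecomposable cases in parallel via the evident duality between $\op$ and $\om$.
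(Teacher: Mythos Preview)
Your argument is correct and is precisely the natural way to verify this fact. The paper does not actually give a proof of this lemma; it simply calls it ``a straightforward observation,'' so your write-up fills in exactly the details one would expect.
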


The next lemma will be needed in both this and later sections.

\begin{lemma}\label{lem-sum-disconn}
Let $\sig$ be a permutation of length $k\ge2$.  If  $\sig$ is indecomposable then the open interval $(\sig,\sig\op \sig)$ is disconnected.  Otherwise, if $\sig$ is skew indecomposable, the open interval $(\sig,\sig\om \sig)$ is disconnected.
\end{lemma}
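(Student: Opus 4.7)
The plan is to handle the skew-indecomposable case by the complement map $\pi \mapsto \comp{\pi}$, which is an automorphism of $\clp$ that exchanges $\op$ with $\om$ and swaps indecomposability with skew indecomposability; so I focus on the direct-sum case with $\sig$ indecomposable and set $k = |\sig|$. The strategy is to partition the open interval $(\sig, \sig\op\sig)$ into two nonempty subsets
\[
A = \{\sig \op \alp : \alp \neq \emptyset,\ \alp < \sig\}, \qquad B = \{\bet \op \sig : \bet \neq \emptyset,\ \bet < \sig\},
\]
and show that no element of $A$ is comparable to any element of $B$.

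To see that $A \cup B$ covers the entire open interval, take $\pi \in (\sig, \sig \op \sig)$ and compose an embedding of $\sig$ in $\pi$ with an embedding of $\pi$ in $\sig \op \sig$; by Lemma~\ref{lem-two-sep-occ} the resulting occurrence of $\sig$ in $\sig \op \sig$ must coincide with either the first or the second copy of $\sig$, which forces the embedding of $\pi$ to use every position in the corresponding half. Therefore $\pi$ is either of the form $\sig \op \alp$ or $\bet \op \sig$. Disjointness $A \cap B = \emptyset$ follows because an equality $\sig \op \alp = \bet \op \sig$ with $0 < |\bet| < k$ would furnish two direct-sum cuts of the same permutation at the distinct positions $|\bet|$ and $k$; combining them yields a decomposition $\sig = \gam \op \del$ with both parts nonempty, contradicting indecomposability of $\sig$.

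The main obstacle will be ruling out comparabilities between $A$ and $B$. Suppose $\sig \op \alp \leq \bet \op \sig$; any embedding picks some positions from the $\bet$-prefix and some from the $\sig$-suffix of $\bet \op \sig$, so the realized pattern has the form $\bet' \op \sig'$ for some $\bet' \leq \bet$ and $\sig' \leq \sig$. The equation $\bet' \op \sig' = \sig \op \alp$ then forces $|\bet'| < k$ to be a valid direct-sum cut of $\sig \op \alp$, but indecomposability of $\sig$ prevents $\sig \op \alp$ from having any cut strictly between $0$ and $k$. Thus $|\bet'| = 0$, giving $|\sig'| = k + |\alp| > k = |\sig|$, which contradicts $\sig' \leq \sig$. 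The reverse comparability $\bet \op \sig \leq \sig \op \alp$ is ruled out by a symmetric cut analysis on the other side: the realized pattern $\sig' \op \alp'$ would need cut position $|\sig'|$ to be a valid cut of $\bet \op \sig$, but indecomposability of $\sig$ shows all such cuts lie in $\{0, \ldots, |\bet|\} \cup \{|\bet| + k\}$, while the length constraints $|\sig'| \leq k$ and $|\alp'| \leq |\alp| < k$ leave no valid value.

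Finally, $A$ and $B$ are nonempty since $k \geq 2$ yields $1 < \sig$, hence $\sig \op 1 \in A$ and $1 \op \sig \in B$. Assembling these four pieces partitions $(\sig, \sig \op \sig)$ into two nonempty sides with no comparabilities crossing, which is precisely the claimed disconnection.
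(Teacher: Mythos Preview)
Your proof is correct and follows essentially the same approach as the paper: both partition $(\sig,\sig\op\sig)$ into the ``$\sig$-as-prefix'' and ``$\sig$-as-suffix'' sides and invoke Lemma~\ref{lem-two-sep-occ} to show the two sides cover the interval.  Your argument is in fact more explicit than the paper's in that you directly rule out comparabilities between $A$ and $B$ via direct-sum cut positions, whereas the paper leaves this implicit in its observation that removing letters from both halves of $\tau$ yields a permutation outside $\ost$.
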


\begin{proof}
  Suppose $\sig$ is indecomposable and  let $\tau=\sig\op\sig$.  Since $\len{\sig}\ge2$, 
the interior $(\sig,\tau)$ of $[\sig,\tau]$ is nonempty.   We claim that $(\sig,\tau)$ is the disjoint union of the following two sets, and is thus disconnected:
\begin{align*}
  S &= \{\pi\in (\sig,\tau) \;|\; \pi = A\op \rho,~\mbox{and $A$ constitutes the only occurrence of $\sig$ in $\pi$}\},\\
  T &= \{\pi\in (\sig,\tau) \;|\; \pi = \rho\op A,~\mbox{and $A$ constitutes the only occurrence of $\sig$ in $\pi$}\}.
\end{align*}

Note that $\rho$ must be nonempty in both cases, in order for $\pi$ to belong to $(\sig,\tau)$.

Clearly, these sets are disjoint, since one consists of permutations whose only occurrence of $\sig$ is an initial segment, and the other set consists of permutations whose only occurrence of $\sig$ is a final segment, and each of these permutations is strictly longer than $\sig$.

The sets $S$ and $T$ cover $(\sig,\tau)$, because, by Lemma~\ref{lem-two-sep-occ}, $\tau$ has precisely two occurrences of $\sig$,  consisting of the first half of $\tau$ and the second half, respectively.  Thus, removing any subset of letters from the first half of $\tau$ yields a permutation in $T$, removing any letters from the second half of $\tau$ yields a permutation in $S$, while removing letters from both halves of $\tau$ yields a permutation that is not in $\ost$.

If $\sig$ is skew indecomposable, an analogous argument establishes the claim.
\end{proof}

Since every permutation is either indecomposable or skew indecomposable (or both), Lemma~\ref{lem-sum-disconn} can be applied in the proof of the next result.

\begin{theorem}\label{thm-disconnected-subinterval}
Given a permutation  $\sig$, let $P(n)$ be the probability that the interval $[\sig,\tau]$ has a non-trivial disconnected subinterval, where $\tau$ is a randomly chosen permutation of length $n$. Then
\[
\lim_{n\ra\infty}{P(n)} = 1.
\]
Thus almost all intervals $[\sig,\tau]$ in $\clp$ are not shellable.
\end{theorem}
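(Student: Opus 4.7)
The plan is to find, for each fixed $\sig$, a single pattern $\pi$ whose presence in $\tau$ already forces a non-trivial disconnected subinterval of $\ist$, and then invoke the elementary fact that a random long permutation contains any fixed pattern with probability tending to $1$.

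First I would produce, for the given $\sig$, a permutation $\rho$ of length at least $3$ with $\sig \leq \rho$ and such that $\rho$ is either indecomposable or skew indecomposable. When $\len{\sig} \geq 3$, the choice $\rho = \sig$ itself works, because by the remark in Section~\ref{sec-prelims} every permutation is indecomposable or skew indecomposable. When $\len{\sig} \leq 2$, it suffices to take $\rho = 231$, which is indecomposable of length $3$ and contains every pattern of length at most $2$. Now set $\pi = \rho \op \rho$ or $\pi = \rho \om \rho$, according as $\rho$ is indecomposable or only skew indecomposable. Lemma~\ref{lem-sum-disconn} then gives that $(\rho,\pi)$ is disconnected, while the closed interval $[\rho,\pi]$ has rank $\len{\rho} \geq 3$. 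Hence whenever $\pi \leq \tau$, the subinterval $[\rho,\pi]$ of $\ist$ is a non-trivial disconnected subinterval, and the theorem of Bj\"orner cited in the introduction forces $\ist$ to be non-shellable.

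It therefore suffices to show that $P(\pi \leq \tau) \to 1$ as $n \to \infty$. Setting $k = \len{\pi}$, I would partition $\{1,\ldots,n\}$ into $m = \lfloor n/k \rfloor$ consecutive disjoint blocks of size $k$ and let $X_i$ be the indicator that the standardisation of $\tau$ on the $i$-th block equals $\pi$. A short symmetry argument gives $E[X_i] = 1/k!$, and a similar count of ordered pairs of disjoint $k$-subsets of $\{1,\ldots,n\}$ gives $E[X_i X_j] = 1/(k!)^2$ for $i \neq j$, so the $X_i$ are pairwise independent. Applying Chebyshev's inequality to $X = \sum_i X_i$ then yields $P(X = 0) \leq k!/m \to 0$, and $X \geq 1$ certifies $\pi \leq \tau$. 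Since $P(n) \geq P(\pi \leq \tau)$, the theorem follows.

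The only real obstacle is the probability estimate in the last step, and even that is routine; the one thing that needs to be checked carefully is the pairwise independence of the block indicators, which is noticeably cleaner than any argument that would need to handle overlapping pattern occurrences. All the topological content of the theorem has already been packaged into Lemma~\ref{lem-sum-disconn}.
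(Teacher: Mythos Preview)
Your proposal is correct and follows essentially the same route as the paper: find a fixed $\rho\ge\sig$ of length at least $3$ that is indecomposable or skew indecomposable, apply Lemma~\ref{lem-sum-disconn} to get that $(\rho,\rho\op\rho)$ or $(\rho,\rho\om\rho)$ is a non-trivial disconnected interval, and then show that a random $\tau$ of length $n$ contains the relevant $\pi$ with probability tending to~$1$. The paper makes the same reduction (taking $\rho=\sig$ when $\len\sig\ge3$ and reducing short $\sig$ to this case), so the topological content is handled identically.

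The only substantive difference is in the probability estimate. The block indicators $X_i$ you define are in fact \emph{mutually} independent (the flattenings of a uniform permutation on disjoint position blocks are independent uniform patterns), so the paper simply bounds the avoidance probability by $(1-1/k!)^{\lfloor n/k\rfloor}$ directly, obtaining an exponentially small bound with no second-moment computation. Your Chebyshev argument is correct but works harder than necessary: once you have checked $E[X_iX_j]=1/(k!)^2$, you have already established enough independence to write down the product bound immediately.
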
  

\begin{proof} We assume $|\sig|\geq3$ since establishing the first assertion for such $\sig$ will clearly also prove it for shorter permutations.  We can also assume that $\sig$ is indecomposable, with the proof for skew indecomposable $\sig$ being similar.  As $n$ grows, let us consider the probability that $\tau$ contains an occurrence of $\sig\op\sig$.

The Marcus-Tardos Theorem\footnote{The Marcus-Tardos Theorem \cite{marcus-tardos}, previously known as the Stanley-Wilf Conjecture, says that the number of permutations of length $n$ that avoid a given pattern $p$ grows exponentially as a function of $n$, whereas the total number of permutations grows much faster, of course.} tells us that this probability will tend to 1, but we can also show this in the following elementary manner.
Let $k=|\sig\op\sig|$.  Clearly, the probability that any given $k$ letters in $\tau$ do not form an occurrence of $\sig\op\sig$ is $1-\frac{1}{k!}$.  Since $\tau$ contains $\lfloor n/k\rfloor$ disjoint subsequences of $k$ letters, we can certainly say that the probability that $\tau$ contains an occurrence of $\sig\op\sig$ is bounded below by
\[
1-\left(1-\frac{1}{k!}\right)^{\lfloor \frac{n}{k}\rfloor}.
\]
Therefore, the interval $\ist$ with $\sig$ indecomposable contains the subinterval $(\sig, \sig\op\sig)$ with probability tending to 1 as $\len{\tau} \to\infty$.  Lemma~\ref{lem-sum-disconn} tells us that these subintervals are disconnected. 

The second assertion then follows from the fact that disconnected intervals of rank at least 3 are not shellable, and from \cite[Prop.~4.2]{bjorner}, which includes the statement that any subinterval of a shellable interval is shellable.
\end{proof}

\section{Disconnectivity of intervals}\label{disc-intervals}

Clear examples of non-shellable intervals $[\sig,\tau]$ are those for which $(\sig,\tau)$ is disconnected with $\len{\tau}-\len{\sig} \geq 3$.  See Figures~\ref{fig-1342-1342675},  \ref{fig-123-356124} and~\ref{fig-123-351624} for such examples.  In fact, for intervals of rank exactly 3, shellability of $(\sig,\tau)$ is equivalent to connectivity since $\D(\sig,\tau)$ is just a graph, that is, one-dimensional.  Moreover, if an interval contains a subinterval that is not shellable, then it is itself not shellable \cite[Prop.~4.2]{bjorner}, as mentioned above, leading to further relevance of disconnected intervals.  For example, the open interval $(123, 1342675)$ is connected, but its open subinterval $(1342, 1342675)$ is not, and thus $(123, 1342675)$ is not shellable.  In fact, ``most'' non-shellable intervals violate shellability because they contain a non-trivial disconnected subinterval, an assertion made precise by Theorem~\ref{thm-disconnected-subinterval}.  Also
 compare Theorem~\ref{thm-layered-shellable}, which shows that, in the case of layered permutations, any non-shellable interval contains a non-trivial disconnected subinterval.  In summary, if we are to study shellability in the permutation pattern poset, the study of disconnectivity is a natural place to start.  

The first examples of disconnected intervals $(\sig,\tau)$ with $\len{\tau}-\len{\sig} \geq 3$ occur when $\len{\tau}=6$.  Two such examples are shown in Figures~\ref{fig-123-356124} and \ref{fig-123-351624} and others follow from Lemma~\ref{lem-sum-disconn}, such as $[321, 321\op321]$.
\begin{figure}[htbp]
\begin{center}
\begin{tikzpicture}[scale=2.0]
\tikzstyle{every node}=[rounded corners=0pt, inner sep=3pt]; 
\draw (1.5,0) node[draw] (a1) {123};
\draw (0,0.8) node[draw] (b1) {4123};
\draw (1,0.8) node[draw] (b2) {3124};
\draw (2,0.8) node[draw] (b3) {1342};
\draw (3,0.8) node[draw] (b4) {2341};
\draw (0,1.6) node[draw] (c1) {45123};
\draw (1,1.6) node[draw] (c2) {35124};
\draw (2,1.6) node[draw] (c3) {24513};
\draw (3,1.6) node[draw] (c4) {34512};
\draw (1.5,2.4) node[draw] (d1) {356124};
\draw (a1) -- (b1);
\draw (a1) -- (b2);
\draw (a1) -- (b3);
\draw (a1) -- (b4);
\draw (b1) -- (c1);
\draw (b1) -- (c2);
\draw (b2) -- (c2);
\draw (b3) -- (c3);
\draw (b4) -- (c3);
\draw (b4) -- (c4);
\draw (c1) -- (d1);
\draw (c2) -- (d1);
\draw (c3) -- (d1);
\draw (c4) -- (d1);
\end{tikzpicture}
\end{center}
\caption{The interval [123, 356124]}
\label{fig-123-356124}
\end{figure}
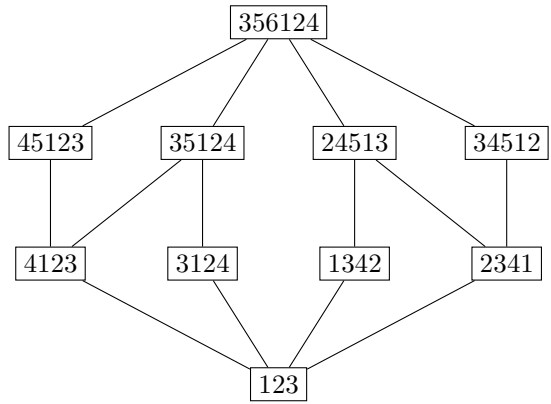
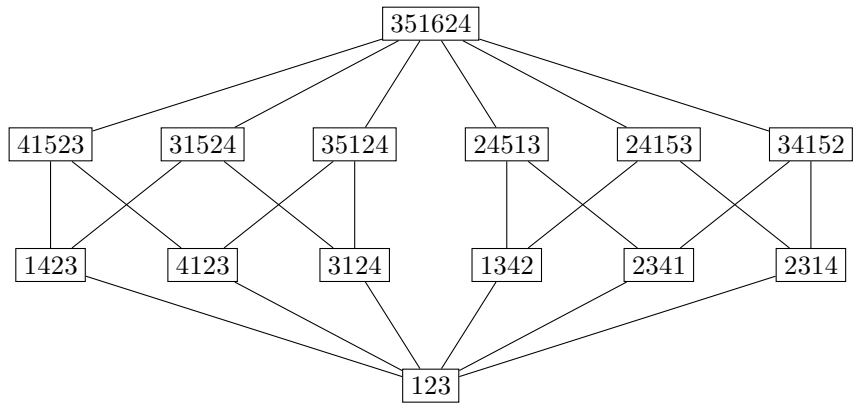
\begin{figure}[htbp]
\begin{center}
\begin{tikzpicture}[scale=2.0]
\tikzstyle{every node}=[rounded corners=0pt, inner sep=3pt]; 
\draw (2.5,0) node[draw] (a1) {123};
\draw (0,0.8) node[draw] (b1) {1423};
\draw (1,0.8) node[draw] (b2) {4123};
\draw (2,0.8) node[draw] (b3) {3124};
\draw (3,0.8) node[draw] (b4) {1342};
\draw (4,0.8) node[draw] (b5) {2341};
\draw (5,0.8) node[draw] (b6) {2314};
\draw (0,1.6) node[draw] (c1) {41523};
\draw (1,1.6) node[draw] (c2) {31524};
\draw (2,1.6) node[draw] (c3) {35124};
\draw (3,1.6) node[draw] (c4) {24513};
\draw (4,1.6) node[draw] (c5) {24153};
\draw (5,1.6) node[draw] (c6) {34152};
\draw (2.5,2.4) node[draw] (d1) {351624};
\draw (a1) -- (b1);
\draw (a1) -- (b2);
\draw (a1) -- (b3);
\draw (a1) -- (b4);
\draw (a1) -- (b5);
\draw (a1) -- (b6);
\draw (b1) -- (c1);
\draw (b1) -- (c2);
\draw (b2) -- (c1);
\draw (b2) -- (c3);
\draw (b3) -- (c2);
\draw (b3) -- (c3);
\draw (b4) -- (c4);
\draw (b4) -- (c5);
\draw (b5) -- (c4);
\draw (b5) -- (c6);
\draw (b6) -- (c5);
\draw (b6) -- (c6);
\draw (c1) -- (d1);
\draw (c2) -- (d1);
\draw (c3) -- (d1);
\draw (c4) -- (d1);
\draw (c5) -- (d1);
\draw (c6) -- (d1);
\end{tikzpicture}
\end{center}
\caption{The interval [123, 351624]}
\label{fig-123-351624}
\end{figure}
\begin{remark}
It is easy to check that Figure~\ref{fig-1342-1342675} gives a negative answer to a question in \cite[\S5]{steingrimsson-open-problems}, which originally appeared (with a typographical error) as \cite[Question~31.2]{mob-sep}.  This question asks if the subcomplex of $\D(\sig,\tau)$ induced by those elements $\pi$ of $\ost$ for which $\mu(\sig,\pi)\neq0$ is a pure complex.  A counterexample with the same rank but with minimal $\len{\tau}$ is $[213,254613]$, which yields a poset isomorphic to the one in Figure~\ref{fig-1342-1342675}.

The motivation for the question above is that the subcomplex might be shellable even if $\D(\sig,\tau)$ is not.  This hope is false even when the subcomplex is pure since $[123,356124]$ is disconnected and remains disconnected after removing the elements $\pi$ for which $\mu(123, \pi)=0$.  See Figure~\ref{fig-123-356124}.

Figure~\ref{fig-123-351624} addresses a side comment from \cite{steingrimsson-open-problems}, which asks for an example of an interval $\ist$ with $\mst = \pm1$, but for which $\D(\sig,\tau)$ is not  homotopy equivalent to a sphere.
\end{remark}

\subsection{A test for disconnectivity of intervals}\label{sub-disconnectivity-test}

Proposition~\ref{prop-disconnectedtest} below gives criteria for checking whether a general interval $\ost$ is disconnected.  Its main application will be its central role in the proof of Theorem~\ref{thm-augmentation}.

Since the lemma below is no more than an expression of the order relation in the permutation pattern poset in terms of zero sets, we omit the proof.

\begin{lemma}\label{lem-zerosets}
Let $\pi_1, \pi_2 \in [\sig,\tau]$ and suppose $\pi_1=\tau-Z_1$ for some $Z_1 \subseteq \{1, \ldots, \len{\tau}\}$.  Then $\pi_1 \leq \pi_2$ if and only if there exists $Z_2 \subseteq Z_1$ such that $\pi_2 = \tau - Z_2$.  
\end{lemma}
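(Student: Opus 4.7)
The plan is to prove both directions of the iff by routine index-tracking, treating the lemma as what the authors call ``an expression of the order relation in the permutation pattern poset in terms of zero sets''. I will read the supposition as permitting $Z_1$ to be chosen alongside $Z_2$ (rather than fixed arbitrarily first, a reading which admits counterexamples such as $\tau=1324$, $\pi_1=12$ with $Z_1=\{2,3\}$, and $\pi_2=132$: here $\pi_1\leq\pi_2$ yet no $Z_2\subseteq\{2,3\}$ gives $\pi_2=\tau-Z_2$). Under this natural reading --- ``$\pi_1\leq\pi_2$ iff some pair $(Z_1,Z_2)$ with $Z_2\subseteq Z_1$ realizes $\pi_1=\tau-Z_1$ and $\pi_2=\tau-Z_2$'' --- the proof is essentially the composition of two pattern embeddings.

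For the easy backward direction, I would start from a pair $Z_2\subseteq Z_1$ with $\pi_1=\tau-Z_1$ and $\pi_2=\tau-Z_2$. Since $Z_2\subseteq Z_1$, the letters of $\tau$ at positions $\{1,\ldots,\len{\tau}\}\setminus Z_1$ form a subsequence of the letters at positions $\{1,\ldots,\len{\tau}\}\setminus Z_2$. Flattening preserves relative order, so the flattened first sequence, which is $\pi_1$, appears as a pattern inside the flattened second sequence, which is $\pi_2$. Hence $\pi_1\leq\pi_2$.

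For the forward direction, suppose $\pi_1\leq\pi_2$. First I would choose any embedding of $\pi_2$ into $\tau$; its zero set is some $Z_2\subseteq\{1,\ldots,\len{\tau}\}$ with $\pi_2=\tau-Z_2$, and the positions of $\pi_2$ are identified with $\{1,\ldots,\len{\tau}\}\setminus Z_2$ via the natural order-preserving bijection. Using $\pi_1\leq\pi_2$, I would pick an embedding of $\pi_1$ into $\pi_2$ with zero set $W\subseteq\{1,\ldots,\len{\pi_2}\}$, so that $\pi_1=\pi_2-W$. Pulling $W$ back through the bijection yields $W'\subseteq\{1,\ldots,\len{\tau}\}\setminus Z_2$. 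Setting $Z_1:=Z_2\cup W'$, one has $Z_2\subseteq Z_1$ by construction, and deleting the positions of $Z_1$ from $\tau$ produces the same subsequence of letters as deleting $W$ from $\pi_2$, which flattens to $\pi_1$. Thus $\pi_1=\tau-Z_1$, exhibiting the required pair.

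I do not expect a substantive mathematical obstacle: the combinatorial content is just the two-step composition of embeddings carried out above, which is why the authors omit the proof. The only real ``obstacle'' is the semantic one noted at the outset, namely committing to the reading under which the lemma is true; once that is fixed, the rest is a matter of identifying positions consistently through the two-stage deletion.
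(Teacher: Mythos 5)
The paper explicitly omits a proof of this lemma, saying only that it ``is no more than an expression of the order relation in the permutation pattern poset in terms of zero sets,'' so there is no authorial argument to compare yours against. That said, both the semantic point you raise and your two-direction argument are correct. Parsed literally, with ``for some $Z_1$'' fixing an arbitrary zero set of $\pi_1$ in advance, the forward implication fails, and your counterexample checks out: for $\tau=1324$ and $Z_1=\{2,3\}$ one gets $\pi_1=12$, while $\pi_2=132$ has $\{4\}$ as its only zero set in $\tau$, which is not contained in $\{2,3\}$. Under the reading where the pair $(Z_1,Z_2)$ may be chosen together, both directions go through exactly as you describe: the backward direction is immediate because the positions surviving $Z_1$ form a subsequence of those surviving $Z_2$, and the forward direction is a composition of embeddings ($\pi_1$ into $\pi_2$ into $\tau$), with $Z_1 = Z_2 \cup W'$ where $W'$ is the pullback of the zero set of the inner embedding.

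One refinement worth recording: in the paper's sole application of this lemma (the proof of Proposition~\ref{prop-disconnectedtest}), what is actually used is the direction with the zero set of the \emph{larger} permutation fixed in advance --- given $\pi \leq \pi_1$ and a specific representation $\pi_1 = \tau - S'_1$, one needs a superset $S''_1 \supseteq S'_1$ with $\pi = \tau - S''_1$. Your forward-direction construction proves this sharper statement too, since the opening step ``choose any embedding of $\pi_2$ into $\tau$'' can equally well start from the given embedding; so the version of the lemma you prove is strong enough to support the paper's use of it, and nothing further is needed.
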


For sets $Z_1$ and $Z_2$, we will follow the custom of writing $Z_1 - Z_2$ for the set difference $Z_1 \setminus Z_2$ when $Z_2 \subseteq Z_1$.  

\begin{proposition}\label{prop-disconnectedtest}
Suppose permutations $\sig < \tau$ satisfy $\len{\tau}-\len{\sig} \geq 3$.  Then the open interval $\ost$ is disconnected if and only if the embeddings of $\sig$ in $\tau$ can be partitioned into two nonempty sets $E_1$ and $E_2$ with the following properties:
\begin{enumerate}
\renewcommand{\theenumi}{\alph{enumi}}
\item\label{prop-disc-a} $S_1 \cap S_2 = \emptyset$, where $S_i$ is the union of the zero sets of the elements of $E_i$;
\item\label{prop-disc-b} For all $\emb_1 \in E_1$ with zero set $Z_1$, and all $\emb_2 \in E_2$ with zero set $Z_2$, there do not exist $z_1 \in Z_1$ and $z_2$ in $Z_2$ such that 
\begin{equation}\label{equ-disconnected}
\tau - (Z_1- \{z_1\}) = \tau - (Z_2 - \{z_2\}).
\end{equation}
\end{enumerate}
Furthermore, the nature of the resulting disconnection is that the elements of $(\sig,\tau)$ of the form $\tau - S'_1$ for $S'_1 \subseteq S_1$ are disconnected from those of the form $\tau - S'_2$ for $S'_2 \subseteq S_2$.
\end{proposition}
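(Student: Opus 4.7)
The plan is to handle both implications by tracking the various representations $\pi = \tau - Z$ of elements of $\ost$ via Lemma~\ref{lem-zerosets}; throughout, $Z_\eta$ denotes the zero set of an embedding $\eta$.

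For the forward direction, suppose a partition $(E_1, E_2)$ satisfying (a) and (b) is given. I would define
\[
V_i = \{\pi \in \ost : \pi = \tau - Z \text{ for some nonempty } Z \subseteq S_i\},\quad i=1,2,
\]
and check three facts. First, $V_1 \cup V_2 = \ost$: any $\pi = \tau - Z$ in $\ost$ extends to an embedding $\eta$ with $Z \subseteq Z_\eta$, and $Z \subseteq S_i$ whenever $\eta \in E_i$. Second, each $V_i$ is upward-closed in $\ost$; this is immediate from Lemma~\ref{lem-zerosets}, since moving up in the poset shrinks the zero set. Third, $V_1 \cap V_2 = \emptyset$, which is the crux of the argument. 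Granted these, a comparable pair $\pi_1 \leq \pi_2$ with $\pi_1 \in V_1$ forces $\pi_2 \in V_1$ by upward-closure, so $\pi_2 \in V_2$ would place $\pi_2$ in the empty intersection; hence $V_1, V_2$ give the disconnection described in the ``furthermore'' clause.

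The main obstacle is proving $V_1 \cap V_2 = \emptyset$. I would use a lifting trick that converts any putative $\pi \in V_1 \cap V_2$ into a direct violation of (b). Suppose $\pi = \tau - Z_1 = \tau - Z_2$ with $Z_i \subseteq S_i$. Pick any embedding $\hat\eta$ of $\sig$ in $\pi$ and any position $j$ in the zero set of $\hat\eta$ (viewed inside $\pi$). Through each representation $\pi = \tau - Z_i$, lift $\hat\eta$ to an embedding $\eta_i$ of $\sig$ in $\tau$ with $Z_{\eta_i} \supseteq Z_i$; since $Z_{\eta_i}$ meets the nonempty set $Z_i \subseteq S_i$, condition (a) forces $\eta_i \in E_i$. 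Let $z_i^\ast$ denote the $\tau$-position to which $j$ lifts under representation $i$. The permutation of length $|\sig|+1$ obtained from $\pi$ by retaining exactly the positions in the image of $\hat\eta$ together with $j$ is then expressible both as $\tau - (Z_{\eta_1} - \{z_1^\ast\})$ and as $\tau - (Z_{\eta_2} - \{z_2^\ast\})$, in direct contradiction to (b).

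For the reverse direction, assume $\ost$ is disconnected with components $A$ and $B$. For each embedding $\eta$, set
\[
C_\eta = \{\tau - Z : \emptyset \neq Z \subsetneq Z_\eta\}\subseteq \ost.
\]
I would first check that each $C_\eta$ is connected in the comparability graph of $\ost$: any two members $\tau - Z, \tau - Z'$ are joined via $\tau - (Z \cap Z')$ when that is nonempty, or via $\tau - (Z \cup Z')$ when it is a proper subset of $Z_\eta$, with the remaining case $Z \cup Z' = Z_\eta$ handled by first passing to $\tau - (Z \setminus \{z_0\})$ for some $z_0 \in Z$. The hypothesis $|Z_\eta| = |\tau|-|\sig|\geq 3$ guarantees the auxiliary elements lie in $C_\eta$. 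Hence each $C_\eta$ is contained in exactly one of $A, B$, which partitions the embeddings into nonempty $E_1, E_2$; nonemptiness holds because $\ost = \bigcup_\eta C_\eta$ covers both components. Conditions (a) and (b) then follow by a single principle: a violation would produce a single element of $\ost$ lying simultaneously in $C_{\eta_1} \subseteq A$ and $C_{\eta_2} \subseteq B$---namely $\tau - \{z\}$ where $z \in S_1 \cap S_2$ for (a), and the common value $\tau - (Z_{\eta_1} - \{z_1\}) = \tau - (Z_{\eta_2} - \{z_2\})$ itself for (b)---contradicting $A \cap B = \emptyset$.
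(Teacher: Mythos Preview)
Your proof is correct, and it takes a route that is recognizably related to but structurally different from the paper's.

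For the implication (a)$+$(b)$\Rightarrow$ disconnected, the paper defines the same pieces $P_i$ (your $V_i$), shows they cover $\ost$, and then argues by contradiction: assuming connectivity, it finds $\pi_1 \in P_1$, $\pi_2 \in P_2$ with $\pi_1 \geq \pi_2$, descends to a minimal element $\pi$ of $\ost$ below $\pi_2$, and follows maximal chains from $\tau$ through $\pi_1,\pi$ and through $\pi_2,\pi$ down to $\sig$ to manufacture embeddings $\emb_1\in E_1$, $\emb_2\in E_2$ that violate (b) at $\pi$.  You instead prove $V_1\cap V_2=\emptyset$ directly: given $\pi=\tau-Z_1=\tau-Z_2$ in the intersection, you lift a single embedding of $\sig$ in $\pi$ through each representation and use (a) to place the lifts in $E_1$ and $E_2$, obtaining the violation of (b) from a single extra position $j$.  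Your argument avoids the minimal-element step and the chain-following, and the upward-closure observation replaces the paper's explicit connectivity contradiction.

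For disconnected $\Rightarrow$ (a)$+$(b), the paper works from the top: it defines $S_i$ by which single deletions $\tau-\{j\}$ land in $P_i$, argues that mixed deletions drop out of $\ist$, and then builds $E_i$ from $S_i$.  You instead work from the embeddings: you show each $C_\eta=\{\tau-Z:\emptyset\neq Z\subsetneq Z_\eta\}$ is connected (your three-case argument is where the hypothesis $\rk{\tau}-\rk{\sig}\geq 3$ is used), so each $C_\eta$ lies in a single component, which immediately partitions the embeddings and makes (a) and (b) one-line consequences.  This is a cleaner packaging of the same content; the paper's version makes the ``furthermore'' description of $P_1,P_2$ more explicit along the way, while yours isolates the key topological fact that each $C_\eta$ is connected.
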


Before we prove Proposition \ref{prop-disconnectedtest}, we make a few remarks about its content and implications.

Note that Condition (\ref{prop-disc-a}) implies that if $(\sig,\tau)$ is disconnected with $\len{\tau} - \len{\sig} \geq 3$, then $\len{\sig} \geq \len{\tau}/2$. 

Roughly speaking, Condition (\ref{prop-disc-b}) states that we cannot ``add back in'' a single nonzero letter to $\emb_1$ and another to $\emb_2$ to obtain equal permutations.   More precisely, for an interval with a fixed top element $\tau$, we know that permutations in that interval can be identified by their (not necessarily unique) zero sets.  Moreover, embeddings have explicit zero letters.  For an embedding $\emb$ of $\sig$ in $\tau$, we will say that we are \emph{filling a zero} in the embedding when we make a given zero letter of $\emb$ nonzero, thus yielding a unique new permutation $\pi$.  In this terminology,~\eqref{equ-disconnected}  holds if there exist embeddings $\emb_1 \in E_1$ and $\emb_2 \in E_2$ such that filling a zero in each embedding results in the same permutation $\pi$.    
 
\begin{example}
For the interval $[1342, 1342675]$ of Figure~\ref{fig-1342-1342675}, we let 
\[E_1 = \{1342000\} \mbox{\ \ and\ \ } E_2 = \{1000342, 0100342, 0010342, 0001342\}.\]  Clearly Condition (\ref{prop-disc-a}) is satisfied.  To see that (\ref{prop-disc-b}) is satisfied, one can either check that every expression of the form~\eqref{equ-disconnected} is false, or note that $\tau - (Z_1 - \{z_1\})$ will always be $13425$, while any element of the form $\tau - (Z_2- \{z_2\})$ will not have its largest letter at the end.  Thus $(1324, 1342675)$ is confirmed as disconnected.

To see that Condition  (\ref{prop-disc-a}) alone is insufficient to imply disconnectivity, consider the interval $(23514, 24618357)$, where we let $E_1 = \{02305104\}$ and $E_2 = \{23510040\}$ without loss of generality.  This interval satisfies  (\ref{prop-disc-a}) but is connected.  It does not satisfy  (\ref{prop-disc-b}): letting $z_1 = 7$ and $z_2 = 6$, we see that both sides of~\eqref{equ-disconnected} yield 246135.  

To see that Condition  (\ref{prop-disc-b}) alone is insufficient to imply disconnectivity, consider the interval $(12, 45312)$, where we let $E_1 = \{12000\}$ and $E_2 = \{00012\}$ without loss of generality.  We see that this interval satisfies (\ref{prop-disc-b}) but is connected; clearly, it does not satisfy (\ref{prop-disc-a}).  

The condition $\len{\tau}-\len{\sig}\geq 3$ is necessary since, for example, the interval $(\one,213)$ is disconnected but does not satisfy (\ref{prop-disc-a}).  
\end{example}

\begin{proof}[Proof of Proposition~\ref{prop-disconnectedtest}]
Suppose $\ost$ is disconnected and can be partitioned into two subposets $P_1$ and $P_2$ that are not connected to each other.  This induces a partition of the set $\{1,2,\ldots,\len{\tau}\}$ into three sets $S_1$, $S_2$ and $R$, defined in the following way: if $\tau - \{j\} \in P_i$ then $j \in S_i$ for $i \in \{1,2\}$, while otherwise $j \in R$.    For the ``only if'' direction of the proof, we will begin by determining those sets $S$ for which $\tau - S \in P_i$.  

Since the union of $P_1$ and $P_2$ is all of $\ost$, it follows that $\tau - \{r\} \not\geq \sig$ for any $r \in R$.  Thus if $\tau - S \in [\sig,\tau]$ for some subset $S$ of $\{1,2,\ldots,\len{\tau}\}$, then $S$ can only contain elements of $S_1$ and $S_2$.  With this in mind, 
let $a_1 \in S_1$ and $a_2 \in S_2$ and consider $\pi = \tau - \{a_1,a_2\}$, which is covered by both $\tau - \{a_1\}$ and $\tau - \{a_2\}$.  If $\pi \geq \sig$ then, since $\len{\tau}-\len{\sig} \geq 3$, $\pi$ is in both $P_1$ and $P_2$, a contradiction.  Thus $\pi \not\geq \sig$.  Continuing this argument, any element of the form $\tau - (S_1' \cup S_2')$ for nonempty subsets $S_i' $ of $S_i$ will lie below $\tau - \{a_1,a_2\}$ for any $a_1 \in S_1'$ and $a_2 \in S_2'$, and so will not be in $\ist$.  Thus every element of $\ist$ takes the form $\tau - S'_i$ for some $i \in \{1,2\}$ and $S'_i \subseteq S_i$. Moreover, $\pi \in (\sig,\tau)$ takes the form $\pi = \tau - S'_i$ with $S'_i \subseteq S_i$ if and only if $\pi \in P_i$, since $\tau - \{j\} \in P_i$ for all $j \in S'_i$.   

For $i=1,2$, let $E_i$ be the embeddings of $\sig$ in $\tau$ obtained from $\tau$ by deleting only elements of $S_i$.  If an embedding $\emb$ of $\sig$ in $\tau$ were not in either $E_1$ or $E_2$, then $\sig$ could take the form $\tau - (S_1' \cup S_2')$ for nonempty subsets $S_i' $ of $S_i$, contradicting the argument of the previous paragraph.  To see why $S_i$ is the union of the zero sets of the elements of $E_i$, as in the statement of the proposition, we make two observations.  First, the zero set of any element of $E_i$ is contained in $S_i$.  Secondly, with the aim of showing that any element of $S_1$ or $S_2$ is contained in the zero set of some element of $E_1 \cup E_2$, let $j \in S_1 \cup S_2$.  Then $\tau - \{j\} \in P_1 \cup P_2$, and at least one embedding that includes $j$ in its zero set can be obtained by following a maximal chain from $\tau$ to $\sig$ via $\tau - \{j\}$.   Thus $j$ is in the zero set of some element of $E_1 \cup E_2$.  Therefore, each $S_i$ is the union of the zero sets of the elements of $E_i$.  By the definition of $S_i$ at the start of this proof, (\ref{prop-disc-a}) is now immediate.  If (\ref{prop-disc-b}) failed to hold, by the last sentence of the previous paragraph the element given by both sides of~\eqref{equ-disconnected} would be in both $P_1$ and $P_2$, a contradiction.  Thus (\ref{prop-disc-a}) and (\ref{prop-disc-b}) both hold.

Now suppose that (\ref{prop-disc-a}) and (\ref{prop-disc-b}) both hold, and define $S_i$ as in (a).  For $i \in \{1,2\}$, let $P_i$ consist of the elements of $(\sig,\tau)$ of the form $\tau - S_i'$ for some $S_i' \subseteq S_i$.  We wish to show that $P_1$ is disconnected from $P_2$ and that their union is all of $(\sig,\tau)$.  Note that this will automatically give the last assertion of the statement of the proposition.

We first observe that every element $\pi \in (\sig,\tau)$ is in $P_1$ or $P_2$.  Indeed, $\pi$ is on a maximal chain from $\tau$ to $\sig$, and following the edges of this maximal chain will determine at least one embedding of $\sig$ in $\tau$.  Thus $\pi = \tau - S$ for some $S \subseteq S_1$ or some $S \subseteq S_2$, or both.

Towards a contradiction, suppose $(\sig,\tau)$ is connected.  Without loss of generality, there exist $\pi_1 \in P_1$ and $\pi_2 \in P_2$ such that $\pi_1 \geq \pi_2$.  Let $\pi \leq \pi_2$ be a minimal element of $\ost$.  We will show that $\pi$ gives a solution to~\eqref{equ-disconnected}, which will be a contradiction.  Suppose $\pi$ can take the form $\tau - (S_1' \cup S_2')$ for nonempty subsets $S_i' $ of $S_i$.  Therefore, $\pi$ is on a maximal chain from $\tau$ to $\sig$ that zeroes out elements of both $S_1$ and $S_2$, and suppose this maximal chain determines the embedding $\emb$ of $\sig$ in $\tau$.   Since $S_1 \cap S_2 = \emptyset$, $\emb$ cannot be in $E_1$ because $\emb$ has elements of $S_2$ zeroed out, and similarly $\emb \not\in E_2$.  But since every embedding of $\sig$ in $\tau$ is in $E_1 \cup E_2$, we get that $\pi \not\geq \sig$.  Therefore every expression for $\pi$ in the form $\tau - S$ must take the (not necessarily unique) form $\tau - S'_i$ for $S'_i \subseteq S_i$ for some $i \in \{1,2\}$. 

Since $\pi \leq \pi_1$ and $\pi_1$ can take the form $\tau - S'_1$ for $S'_1 \subseteq S_1$, Lemma~\ref{lem-zerosets} gives that $\pi = \tau - S''_1$ for some $S''_1 \subseteq S_1$.  Taking a corresponding maximal chain $C_1$ from $\tau$ to $\sig$ through $\pi_1$ and $\pi$ will thus end at an embedding $\emb_1 \in E_1$ with $\emb_1$ having zero set $Z_1$. Similarly, since $\pi \leq \pi_2$, we get $\pi = \tau - S''_2$ for some $S''_2 \subseteq S_2$, and a maximal chain $C_2$ with a resulting embedding $\emb_2 \in E_2$ with zero set $Z_2$.  Since $\pi$ is a minimal element of $\ost$, it covers $\sig$.  For $i=1,2$, the element of $C_i$ which covers $\sig$ will take the form $\tau - (Z_i - \{z_i\})$ for some $z_i \in Z_i$.   But since this element is $\pi$ for both $C_1$ and $C_2$,~\eqref{equ-disconnected} holds, contradicting (\ref{prop-disc-b}).  We conclude that $\ost$ is disconnected. 
\end{proof}

\begin{corollary}
Suppose permutations $\sig < \tau$ satisfy $\len{\tau}-\len{\sig} \geq 3$.  Then $\ost$ has at least $k$ connected components if and only if the embeddings of $\sig$ in $\tau$ can be partitioned into sets $E_1, E_2, \ldots, E_k$ that pairwise satisfy (\ref{prop-disc-a}) and (\ref{prop-disc-b}) of Proposition~\ref{prop-disconnectedtest}.\end{corollary}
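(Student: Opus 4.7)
The plan is to reduce the corollary to iterated applications of Proposition~\ref{prop-disconnectedtest}, handling the two directions separately.

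For the ``if'' direction, I would proceed by induction on $k$, with base case $k=1$ trivial (any nonempty interval has at least one connected component) and $k=2$ being Proposition~\ref{prop-disconnectedtest} itself. Given a partition $E_1, \ldots, E_k$ satisfying (\ref{prop-disc-a}) and (\ref{prop-disc-b}) pairwise, I would group the last $k-1$ classes as $E' = E_2 \cup \cdots \cup E_k$, whose union of zero sets is $S' = S_2 \cup \cdots \cup S_k$. First I would verify that the pair $(E_1, E')$ satisfies the hypotheses of Proposition~\ref{prop-disconnectedtest}: condition (\ref{prop-disc-a}) holds because $S_1$ is disjoint from each $S_i$ with $i \geq 2$, hence from their union; condition (\ref{prop-disc-b}) holds because any embedding in $E'$ lies in some specific $E_i$ with $i \geq 2$, to which the pairwise hypothesis with $E_1$ applies. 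The proposition then splits $\ost$ into a piece consisting of elements $\tau - S_1'$ for $S_1' \subseteq S_1$, disconnected from the piece of elements $\tau - (S')'$ for $(S')' \subseteq S'$. Applying the inductive hypothesis to this second piece with the partition $E_2, \ldots, E_k$ (whose pairwise conditions are inherited) yields at least $k-1$ components there, for at least $k$ in total.

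For the ``only if'' direction, suppose $\ost$ has at least $k$ connected components and fix $k$ of them, $P_1, \ldots, P_k$. I would set $S_i = \{j : \tau - \{j\} \in P_i\}$ for each $i$. The crucial structural fact, generalizing verbatim the argument used in the proof of Proposition~\ref{prop-disconnectedtest}, is that for $a_i \in S_i$ and $a_j \in S_j$ with $i \neq j$, the permutation $\tau - \{a_i, a_j\}$ cannot lie in $\ist$: since $\len{\tau} - \len{\sig} \geq 3$, otherwise it would sit strictly below both $\tau - \{a_i\}$ and $\tau - \{a_j\}$ in $\ost$ and hence simultaneously in $P_i$ and $P_j$. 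Iterating this observation shows that every element of $\ost$ takes the form $\tau - S'$ with $S' \subseteq S_i$ for a single $i$; in particular every embedding's zero set lies in a unique $S_i$, and I would let $E_i$ be the embeddings whose zero set is contained in $S_i$. Each $E_i$ is nonempty since any $j \in S_i$ extends to an embedding via a saturated chain from $\tau - \{j\}$ down to $\sig$. Condition (\ref{prop-disc-a}) is then immediate, while a pairwise failure of (\ref{prop-disc-b}) would, as in the proof of the proposition, produce a permutation lying in both $P_i$ and $P_j$, a contradiction.

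The main obstacle I anticipate is exactly the structural claim in the forward direction that every element of $\ost$ is of the form $\tau - S'$ with $S' \subseteq S_i$ for a unique $i$; but the two-site deletion argument above handles this cleanly. Beyond that, everything amounts to inductive bookkeeping for the ``if'' direction and careful transcription of the $k=2$ argument for the ``only if'' direction, so no genuinely new technical idea should be needed.
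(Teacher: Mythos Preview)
Your approach matches the paper's, which tersely applies Proposition~\ref{prop-disconnectedtest} with one connected component against the remainder of $\ost$, notes that the choice of component was arbitrary, and says the converse is similar. One small fix to your ``only if'' direction: if $\ost$ has strictly more than $k$ components, fixing only $k$ of them means some embeddings of $\sig$ in $\tau$ have zero sets lying in none of your $S_i$, so the $E_i$ fail to partition all embeddings; simply take $P_1,\ldots,P_{k-1}$ to be individual components and let $P_k$ be the union of all remaining components, after which your argument goes through verbatim.
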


\begin{proof}
For the ``only if'' direction, consider any connected component $P_1$, which we know is disconnected from the remainder $P_2$ of $\ost$, and apply Proposition~\ref{prop-disconnectedtest}.  Since $P_1$ is arbitrary, the result follows.  A similar idea proves the converse.
\end{proof}

An example of an interval $\ost$ with $k$ connected components is given by setting $\sig = 321 \op 321 \op \cdots \op 321$, i.e., the direct sum of $k-1$ copies of 321, and $\tau = \sig \op 321$.  Each connected component is simply a chain of length 1.

\subsection{Preservation of disconnectivity under augmentation}

In practice, many disconnected intervals are of the form $(\as\,,\, \at)$ for some disconnected interval $\ost$.  Our next result explains this phenomenon.
 
\begin{theorem}\label{thm-augmentation}
Suppose $(\sig,\tau)$ is a disconnected interval.  Then for any permutation $\alp$, 
the open interval
\[
(\as\, ,\,\at)
\]
is also disconnected.
\end{theorem}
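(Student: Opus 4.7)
The plan is to apply Proposition~\ref{prop-disconnectedtest} to the interval $(\as,\at)$. First suppose $\len{\tau}-\len{\sig}\ge 3$; the rank-2 case follows from Proposition~\ref{prop-ranktwo}, since multiple runs of removable letters of $\tau$ with respect to $\sig$ yield multiple runs of removable letters in the $\tau$-part of $\at$ with respect to $\as$. Given the partition $E_1\sqcup E_2$ of the embeddings of $\sig$ in $\tau$ provided by Proposition~\ref{prop-disconnectedtest}, I would lift it to a partition $E_1'\sqcup E_2'$ of the embeddings of $\as$ in $\at$ satisfying the same conditions~(\ref{prop-disc-a}) and~(\ref{prop-disc-b}).

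The key structural observation is that in any embedding $\emb'$ of $\as$ in $\at$, the $\len{\sig}$ positions playing the role of $\sig$ must lie in the $\tau$-part of $\at$ (positions $\len{\alp}+1$ through $\len{\alp}+\len{\tau}$); indeed those positions carry values strictly greater than all values at the $\alp$-positions of $\emb'$, and a pigeonhole argument rules out any of them lying in the $\alp$-part of $\at$. Consequently $\emb'$ canonically induces an embedding $\phi(\emb')$ of $\sig$ in $\tau$ by shifting the $\sig$-positions down by $\len{\alp}$. I then set $E_i' := \phi^{-1}(E_i)$; each is nonempty because for $\emb \in E_i$, the canonical extension $\emb'$ (placing $\alp$ verbatim in the first $\len{\alp}$ positions of $\at$ and using $\emb$ in the $\tau$-part) satisfies $\phi(\emb') = \emb \in E_i$.

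To verify condition~(\ref{prop-disc-a}) for this partition: a $\tau$-part position $j>\len{\alp}$ lying in $S_1'\cap S_2'$ would, after shifting by $-\len{\alp}$, lie in $S_1\cap S_2 = \emptyset$, a contradiction. The $\alp$-part positions require a separate argument using condition~(\ref{prop-disc-b}) for $E_1,E_2$. To verify condition~(\ref{prop-disc-b}): suppose for contradiction that filling one zero each in $\emb_1'\in E_1'$ and $\emb_2'\in E_2'$ produces the same permutation $\pi$. The direct-sum decomposition $\pi = \alp_1\op\tau_1$ inherited from $\at$ should then express $\tau_1 \in [\sig,\tau]$ both as $\tau$ minus a subset of $S_1$ (via $\phi(\emb_1')$) and as $\tau$ minus a subset of $S_2$ (via $\phi(\emb_2')$); whenever $\sig<\tau_1<\tau$, this places $\tau_1$ simultaneously in both components of $\ost$, contradicting its disconnectedness.

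The main obstacle will be the boundary case in which the deletions concentrate entirely in the $\alp$-part of $\at$, so that $\tau_1 = \tau$ and the reduction to $\ost$ breaks down; the same difficulty underlies the $\alp$-part portion of condition~(\ref{prop-disc-a}). Resolving these edge cases will demand a more delicate argument that exploits the fact that any non-canonical embedding of $\as$ in $\at$---one in which some $\alp$-positions of the embedding lie in the $\tau$-part of $\at$---exists only when the corresponding $\tau$-positions carry values below the minimum of the $\sig$-part values, a constraint strong enough that the boundary configurations force a violation of either the disjointness of $S_1$ and $S_2$ or of condition~(\ref{prop-disc-b}) for the original partition.
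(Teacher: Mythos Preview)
Your high-level strategy matches the paper's exactly: handle rank $2$ via Proposition~\ref{prop-ranktwo}, then for rank $\geq 3$ lift the partition $E_1 \sqcup E_2$ through the induced map $\phi$ and verify conditions~(\ref{prop-disc-a}) and~(\ref{prop-disc-b}) of Proposition~\ref{prop-disconnectedtest}. Your structural observation that the $\sig$-letters must land in the $\tau$-part of $\at$ is correct and is also the paper's starting point.

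However, the places you flag as ``requiring a separate argument'' are precisely where the real content lies, and your proposal does not supply it. The paper's key idea, which you are missing, is an \emph{asymmetry trick}: since $S_1 \cap S_2 = \emptyset$, position $1$ of $\tau$ lies in at most one of the $S_i$; say without loss of generality $1 \notin S_1$. Then every embedding in $E_1$ has a nonzero first letter, which forces every embedding in $E_1' = \phi^{-1}(E_1)$ to place $\alp$ verbatim in the first $\len{\alp}$ positions of $\at$. Consequently $S_1'$ is exactly the shift of $S_1$ by $\len{\alp}$ and contains \emph{no} $\alp$-part positions, so the $\alp$-part case of~(\ref{prop-disc-a}) is vacuous on the $E_1'$ side. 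This is a purely condition-(\ref{prop-disc-a}) argument; your suggestion to invoke condition~(\ref{prop-disc-b}) of the original partition for this step points in the wrong direction.

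The same asymmetry drives the proof of~(\ref{prop-disc-b}). Because $1 \notin S_1$, the common cover $\pi$ obtained by filling a zero of $\emb_1' \in E_1'$ is automatically of the form $\alp \op \sig'$ with $\sig'$ covering $\sig$ in $\ist$. One must then split into two cases for the zero filled in $\emb_2'$: if it is after the $\len{\alp}$-th nonzero position, the match $\sig' = \sig''$ violates condition~(\ref{prop-disc-b}) for $(\sig,\tau)$ as you anticipate; but if it is before, one gets $\sig' = \one \op \sig$, and a further argument using Lemma~\ref{lem-samerun} (same-run positions) is needed to manufacture a new embedding in $E_1'$ that contradicts $1 \notin S_1$. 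Your description of the boundary case (``values below the minimum of the $\sig$-part values'') is too vague to substitute for this, and in fact there is no value constraint on which $\tau$-positions can host $\alp$-letters---only a positional one (they must precede the $\sig$-positions).
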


\begin{proof}
Let us assume that $(\sig,\tau)$ is disconnected.  First suppose that $\len{\tau}-\len{\sig}=2$ and refer to Proposition~\ref{prop-ranktwo}.  If removing a particular letter of $\tau$ gives a permutation greater than $\sig$, then removing the corresponding letter of $\at$ will give a permutation greater than $\as$.  Therefore, there are at least as many elements of rank 1 in $[\as, \at]$ as in $\ist$, implying the result.

Now assume $\len{\tau}-\len{\sig}\geq3$.  We will use Proposition~\ref{prop-disconnectedtest} throughout the remainder of this proof as our characterization of disconnectivity, and adopt the notation used there for the interval $(\sig,\tau)$.  

It will be helpful to use a running example throughout.  Consider the disconnected interval $(321, 326154)$, where $E_1 = \{320100\}$ and $E_2 = \{003021\}$.  Suppose we are trying to show that the interval $(21\op321\,,\, 21\op326154) = (21543, 21548376)$ is disconnected.  Except in sentences where we explicitly mention our example, all statements will apply to the general case.

Our first observation is that when $\as$ embeds in $\at$, the letters of the $\sig$ portion of $\as$ must embed into the $\tau$ portion of $\at$.  In this way, every embedding $\emb^+$ of $\as$ in $\at$ uniquely induces an embedding of $\sig$ in $\tau$.  In our example, the embeddings $21005043$ and $00215043$ both induce the embedding $003021$.  For $i=1,2$, let $E_i^+$ denote those embeddings of $\as$ in $\at$ that induce an element of $E_i$.  In our example, $E_1 = \{21540300\}$ and $E_2 = \{21005043, 00215043\}$.  Clearly, every embedding of $\as$ is in exactly one of $E_1^+$ and $E_2^+$.  

\noindent\textbf{Proof of (\ref{prop-disc-a}).}  Defining $S_i^+$ as the union of the zero sets of the elements of $E_i^+$, our first task is to show that $S_1^+ \cap S_2^+ = \emptyset$.  In the setting of $(\sig,\tau)$, we know that the position 1 cannot be in both $S_1$ and $S_2$ since $S_1 \cap S_2 = \emptyset$.  Suppose, without loss of generality, that $1 \not\in S_1$.  In other words, every embedding $\emb$ in $E_1$ satisfies $\emb(1) \neq 0$.

Let $\emb^+ \in E_1^+$ be an embedding of $\as$ in $\at$ that induces an embedding $\emb \in E_1$.  Since $\emb(1) \neq 0$, it follows from the definition of this inducing that $\emb^+$ must embed the letters of the $\alp$ portion of $\as$ directly into the $\alp$ portion of $\at$.  As a result, the elements of $S^+_1$ are exactly the elements of $S_1$ shifted by an appropriate amount, i.e., 
\begin{equation}\label{equ-s1plus}
S_1^+ = \{s+\len{\alp} : s \in S_1\}.
\end{equation}
In our example, $S_1 = \{3,5,6\}$ and $S_1^+ = \{5,7,8\}$.  In particular, every element of $S_1^+$ is contained in the $\tau$ positions of $\at$, i.e., in the set $\{\len{\alp}+1, \ldots, \len{\alp}+\len{\tau}\}$.  So let us consider the elements of $S_2^+$ contained in the $\tau$ positions of $\at$.  If $\emb^+ \in E_2^+$ induces $\emb \in E_2$, the nonzero letters of $\emb$ in $\tau$ must correspond to nonzero letters of $\emb^+$ in the $\tau$ positions of $\at$.  Therefore, the zero letters of $\emb^+$ in the $\tau$ positions of $\at$ must correspond to zero letters of $\emb$. More precisely, $S_2^+$ is contained in the set
\begin{equation}\label{equ-s2plus}
\{1, 2, \ldots, \len{\alp}\} \cup \{s+\len{\alp} : s \in S_2\}.
\end{equation}
In our example, this is the set $\{1,2\} \cup \{3,4,6\}$.  Comparing~\eqref{equ-s1plus} and~\eqref{equ-s2plus}, the fact that $S_1 \cap S_2 = \emptyset$ implies  that $S_1^+ \cap S_2^+ = \emptyset$.

\

\noindent\textbf{Proof of (\ref{prop-disc-b}).}  The second part of the proof is to show that since $(\sig,\tau)$ satisfies (\ref{prop-disc-b}) of Proposition~\ref{prop-disconnectedtest}, $(\as\,,\, \at)$ satisfies the appropriate analogue.  Suppose to the contrary that 
\begin{equation}\label{equ-plusconnected}
(\at) - (Z_1^+ - \{z_1\}) = (\at) - (Z_2^+ - \{z_2\}),
\end{equation}
where $Z_i^+$ is the zero set of some $\emb_i^+ \in E_i^+$, and $z_i \in Z_i^+$, for $i=1,2$.  In other words, we can fill a zero in each of $\emb_1^+$ and $\emb_2^+$ to obtain a common permutation $\pi$.  Note that $\pi$ covers $\as$.

Again, suppose without loss of generality that $1 \not\in S_1$.  Thus, as before, $\emb_1^+$ embeds the $\alp$ portion of $\as$ directly into the $\alp$ portion of $\at$.  Since $\pi$ is obtained by filling a zero in $\emb_1^+$ and furthermore $1 \not\in S_1$, this zero must be in position $\len{\alp} + j$ of $\emb_1^+$, where
\begin{equation}\label{equ-positionremoved}
j \in \{2, \ldots, \len{\tau}\}.
\end{equation}
In our example, $\emb_1^+ = 21540300$ and suppose we fill the zero in position $2+5$ to obtain $21540360$, so $\pi=215436$.  It follows that 
\begin{equation}\label{equ-pi}
\pi = \alp \op \sig',
\end{equation}
where $\sig'$ is an element that covers $\sig$ in $\ist$.  We know that $\sig'$ can be defined in the setting of $\ost$ in the following way: letting $\emb_1$ denote the embedding of $\sig$ in $\tau$ induced by $\emb_1^+$, $\sig'$ is the permutation obtained from $\emb_1$ by filling the zero in some position $j$, and suppose that the new nonzero entry is the $j'$-th entry of $\sig'$.  Since $1 \not\in S_1$, observe that $1 < j' \leq j$.  In our example, $\emb=320100$, $j=5$, $\sig'=3214$,  and $j'=4$.

Next consider $\emb_2^+$ and suppose first that $\pi$ is obtained from $\emb_2^+$ by filling a zero before the $\len{\alp}$-th nonzero position of $\emb_2^+$.  
Then the first $\len{\alp}+1$ entries of $\pi$ take the form 
\begin{equation}\label{equ-pi2}
(\alp'_1, \ldots, \alp'_{i}, a, \alp'_{i+1}, \ldots, \alp'_{\len{\alp}})
\end{equation}
where $a$ is the entry introduced by the filling of the zero, and each $\alp'_k$ is either equal to $\alp_k$ or $\alp_k+1$, depending on the value of $a$.  In our example, we could take $\emb_2^+ = 00215043$ and if we fill the zero in position 1, we obtain $10326054$; the expression in~\eqref{equ-pi2} is then $132$.  (From this point on, our example becomes less useful since it should not yield a solution to~\eqref{equ-plusconnected}, while our assumption in the general case is that such a solution exists.  Thus the deductions that follow do not apply to our example.) Reconciling~\eqref{equ-pi} and~\eqref{equ-pi2}, we must have $a=\alp_{i+1}$ and $\alp'_{\len{\alp}} > \len{\alp}$, so $\alp'_{\len{\alp}} = \len{\alp}+1$.  Therefore, we get that 
 \[
 \pi = \alp \op \sig' = \alp \op \one \op \sig.
 \]
Thus $\sig' = \one \op \sig$ covers $\sig$ in $[\sig,\tau]$ and has the property that removing its entry in position 1 or position $j'$ recovers $\sig$ after flattening.  Lemma~\ref{lem-samerun} then gives that positions $j'$ and 1 must be in the same run in $\sig'$. 

From $\emb_1^+$, obtain a new embedding $\emb_3^+$ in $\at$ by adding a zero in position $\len{\alp}+1$ and filling the zero in position $\len{\alp}+j$, but otherwise preserving $\emb_1^+$.  Since 1 and $j'$ are positions in the same run in $\sig'$, $\emb_3^+$ is an embedding of $\as$ in $\at$.  Since $\len{\tau}-\len{\sig} \geq 2$, 
$\emb_3^+$ and $\emb_1^+$ have at least one zero position in common, and so $\emb_3^+ \in E_1^+$.  But $\emb_3^+$ induces an embedding $\emb_3$ of $\sig$ in $\tau$ that satisfies $\emb_3(1)=0$.  This contradicts the fact that $1 \not\in S_1$. 

Finally, assume $\pi$ is obtained from $\emb_2^+$ by filling a zero after the $\len{\alp}$-th nonzero position of $\emb_2^+$. 
In our example, we could take $\emb_2^+= 21005043$ and filling the zero in position 6 yields $21006354$ and so $\pi = 216354$.  Alternatively, we could take $\emb_2^+=00215043$ and filling the zero in position 6 yields $00326154$ and so $\pi=326154$.  For $\pi$ to give a solution to~\eqref{equ-plusconnected}, we know from~\eqref{equ-pi} that $\pi$ must take the form $\pi = \alp \op \sig''$ for some $\sig''$.  Letting $\emb_2$ denote the embedding of $\sig$ in $\tau$ induced by $\emb_2^+$, we see that $\sig''$ is an element that covers $\sig$ in $\ist$ and is obtained from $\emb_2$ by filling a zero.  But we already know that $\sig'$ is an element that covers $\sig$ in $\ist$ and is obtained from $\emb_1$ by filling a zero, so~\eqref{equ-pi} implies that $\sig'=\sig''$.
Applying Proposition~\ref{prop-disconnectedtest}(\ref{prop-disc-b}), this contradicts the disconnectivity of $(\sig,\tau)$.  
\end{proof}

For completeness we list in the following corollary the  straightforward symmetric variations of Theorem~\ref{thm-augmentation}.

\begin{corollary}\label{cor-augmentation}
Suppose $(\sig,\tau)$ is a disconnected interval.  Then for any permutation $\alp$, all of the following augmentations of $(\sig,\tau)$ are also disconnected:
\begin{enumerate}
\renewcommand{\theenumi}{\alph{enumi}}
\item $(\as\, ,\,\at)$;
\item $(\alp\om\sig\,,\,\alp\om\tau)$;
\item $(\sig\op\alp\,,\,\tau\op\alp)$;
\item $(\sig\om\alp\,,\,\tau\om\alp)$.
\end{enumerate}
Consequently, any sequence of augmentations from these four types preserves disconnectivity.
\end{corollary}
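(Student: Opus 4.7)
My plan is to deduce parts (b), (c), and (d) from Theorem~\ref{thm-augmentation} (which is precisely part (a)) by exploiting the two standard symmetries of $\clp$: the reverse map $\pi\mapsto\rev{\pi}$ and the complement map $\pi\mapsto\comp{\pi}$. Both are poset automorphisms of $\clp$, hence they send any interval to an isomorphic interval and therefore preserve every topological invariant of the order complex—in particular, connectedness.

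First I would record how these two maps interact with direct and skew sum. A direct index computation from the definitions in Section~\ref{sec-prelims} gives
\[
\rev{(\alp\op\bet)} = \rev{\bet}\om\rev{\alp}, \qquad \comp{(\alp\op\bet)} = \comp{\alp}\om\comp{\bet},
\]
together with the symmetric versions obtained by swapping $\oplus$ and $\ominus$. The key features are that each of reverse and complement swaps $\oplus$ with $\ominus$, and that reverse additionally swaps the order of the summands.

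Now for (b): if $(\sig,\tau)$ is disconnected, then $(\comp{\sig},\comp{\tau})$ is also disconnected since complement is a poset automorphism. Applying Theorem~\ref{thm-augmentation} with $\comp{\alp}$ in place of $\alp$ shows that $(\comp{\alp}\op\comp{\sig}\,,\,\comp{\alp}\op\comp{\tau})$ is disconnected. Taking complements once more and applying the identity above converts this to $(\alp\om\sig\,,\,\alp\om\tau)$, which must therefore be disconnected as well. Part (d) follows by running exactly the same argument with reverse in place of complement, using the identity $\rev{(\alp\op\bet)}=\rev{\bet}\om\rev{\alp}$ and the fact that reverse is also a poset automorphism; the swap of summand order produces $(\sig\om\alp\,,\,\tau\om\alp)$ rather than $(\alp\om\sig\,,\,\alp\om\tau)$. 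Part (c) is then obtained either by applying the reverse symmetry to (b), or equivalently by conjugating Theorem~\ref{thm-augmentation} with the reverse–complement involution $\pi\mapsto\rc{\pi}$. The final assertion, that any sequence of these four augmentations preserves disconnectivity, is an immediate induction, since each individual step does.

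There is no real obstacle here: the only thing requiring care is the bookkeeping of how $\rev{\cdot}$ and $\comp{\cdot}$ turn $\oplus$ into $\ominus$ (and how reverse flips the order of summands), so that the correct instance of Theorem~\ref{thm-augmentation} is applied and the correct symmetry is undone at the end. Everything else is formal.
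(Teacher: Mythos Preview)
Your proposal is correct and essentially identical to the paper's proof: both derive (b), (c), (d) from Theorem~\ref{thm-augmentation} by conjugating with the poset automorphisms given by complement, reverse, and reverse--complement, using exactly the identities $\comp{(\alp\op\bet)}=\comp{\alp}\om\comp{\bet}$ and $\rev{(\alp\op\bet)}=\rev{\bet}\om\rev{\alp}$ you record. The only cosmetic difference is that the paper treats all three symmetries in parallel rather than deriving (c) from (b), but this is the same argument.
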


Combined with Lemma~\ref{lem-sum-disconn} for example, Corollary~\ref{cor-augmentation} allows us to easily generate infinite classes of disconnected intervals.

Our ``augmentation'' terminology is not meant to suggest that the intervals themselves are larger, just that the top and bottom elements of the corresponding closed intervals are longer.  

Recall that the \emph{complement} $\comp{\pi}$ of a permutation $\pi$ with $\len{\pi}=k$ is defined by
\[
\comp{\pi} = (k+1-\pi(1), \ldots, k+1-\pi(k)).
\]

\begin{proof}[Proof of Corollary~\ref{cor-augmentation}]
Part (a) is exactly Theorem~\ref{thm-augmentation}.  
For any permutation $\pi$, let $\rev{\pi}$ denote its reversal, and $\rc{\pi}$ denote the complement of $\rev{\pi}$.  If $(\sig,\tau)$ is disconnected, since reversal and complementation each preserve pattern containment, we know that $(\comp{\sig}, \comp{\tau})$, $(\rc{\sig}, \rc{\tau})$ and $(\rev{\sig}, \rev{\tau})$ are all disconnected.  By Theorem~\ref{thm-augmentation}, we get that the following intervals are all disconnected for any permutation $\alp$:
\[
(\comp{\alp}\op\comp{\sig}, \comp{\alp}\op\comp{\tau}), \ \ 
(\rc{\alp}\op\rc{\sig}, \rc{\alp}\op\rc{\tau}), \ \ 
(\rev{\alp}\op\rev{\sig}, \rev{\alp}\op\rev{\tau}).
\] 
These open intervals can be rewritten as 
\[
(\comp{(\alp\om\sig)}, \comp{(\alp\om\tau)}), \ \ 
(\rc{(\sig\op\alp)}, \rc{(\tau\op\alp)}), \ \ 
(\rev{(\sig\om\alp)}, \rev{(\tau\om\alp)}),
\]
respectively, from which (b), (c) and (d) follow.
\end{proof}

\section{Isomorphism under augmentation}\label{sec-iso}

While Corollary~\ref{cor-augmentation} shows that disconnectivity is preserved under augmentation, under certain conditions we actually get an isomorphism, as we now show.  As in Corollary~\ref{cor-augmentation}, we list here all the  versions obtained from symmetries.

\begin{theorem}\label{thm-isomorphism}
Consider an interval $[\sig,\tau]$ and let $\alp$ and $\gam$ be indecomposable permutations and $\bet$ and $\del$ be skew indecomposable permutations.
\begin{enumerate}
\renewcommand{\theenumi}{\alph{enumi}}
\item If $\alp\op\sig \not\leq \tau$, then $\ist \,\cong\, [\alp\op\sig\,,\, \alp\op\tau]$.
\item If $\bet\om\sig \not\leq \tau$, then $\ist \,\cong\, [\bet\om\sig\,,\,\bet\om\tau]$.
\item If $\sig\op\gam \not\leq \tau$, then $\ist \,\cong\, [\sig\op\gam\,,\, \tau\op\gam]$.
\item If $\sig\om\del \not\leq \tau$, then $\ist \,\cong\, [\sig\om\del\,,\, \tau\om\del]$.
\end{enumerate}
\end{theorem}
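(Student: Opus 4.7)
The plan is to prove (a) by showing that the map $\phi \colon [\sig,\tau] \to [\alp\op\sig,\alp\op\tau]$ given by $\phi(\pi)=\alp\op\pi$ is a poset isomorphism, and then derive (b)--(d) from (a) via the complement/reverse symmetries of $\clp$.  The map $\phi$ is clearly well-defined, injective, and order-preserving.  The content lies in two claims: (i) $\phi$ is surjective, and (ii) $\phi(\pi_1) \leq \phi(\pi_2)$ forces $\pi_1 \leq \pi_2$; both will follow from the same structural tool.

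That tool is the following elementary observation, which I will use repeatedly: every pattern of a direct sum $\alp\op\gam$ has the form $\alp_0\op\gam_0$ for some $\alp_0\leq\alp$ and $\gam_0\leq\gam$ (empty summands allowed).  This holds because in any embedding the chosen positions of $\alp\op\gam$ split between the $\alp$-part and the $\gam$-part with all values in the former below all values in the latter, so the flattened pattern is a direct sum.  Given $\pi' \in [\alp\op\sig,\alp\op\tau]$, this writes $\pi'=\alp_0\op\tau_0$ with $\alp_0\leq\alp$ and $\tau_0\leq\tau$.  Now fix an embedding of $\alp\op\sig$ into $\pi'$.  Because $\alp$ is indecomposable, its image is contained in a single indecomposable component of $\pi'$, and the indecomposable components of $\pi'$ are precisely those of $\alp_0$ followed by those of $\tau_0$.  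If the component containing the image of $\alp$ lies in $\alp_0$, then $\alp\leq\alp_0\leq\alp$ gives $\alp_0=\alp$.  If instead it lies in some component $\tau_0^{(k)}$ of $\tau_0$, then the $\sig$-portion of the embedding, sitting at later positions with larger values, must lie inside $\tau_0^{(k)}\op\tau_0^{(k+1)}\op\cdots$, so $\alp\op\sig$ is a pattern of this suffix of $\tau_0$, forcing $\alp\op\sig\leq\tau_0\leq\tau$ and contradicting the hypothesis.  Thus $\pi'=\alp\op\tau_0$.

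I will then re-apply the same tool to $\alp\op\sig\leq\alp\op\tau_0$, obtaining an equality $\alp\op\sig=\alp_1\op\sig_0$ with $\alp_1\leq\alp$ and $\sig_0\leq\tau_0$.  Since $\alp$ is indecomposable, the finest direct-sum decomposition of $\alp\op\sig$ begins with $\alp$, so a direct-sum prefix $\alp_1$ of length at most $|\alp|$ is either $\emptyset$ or $\alp$; the $\emptyset$ case collapses to $\alp\op\sig\leq\tau_0\leq\tau$, so $\alp_1=\alp$ and $\sig_0=\sig\leq\tau_0$.  This shows $\tau_0 \in [\sig,\tau]$ and completes surjectivity.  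For claim (ii), the identical prefix argument applied to $\alp\op\pi_1\leq\alp\op\pi_2$ (writing $\alp\op\pi_1=\alp_2\op\pi_2'$) rules out $\alp_2=\emptyset$, which would give $\alp\op\sig\leq\alp\op\pi_1\leq\pi_2\leq\tau$, and so forces $\pi_1=\pi_2'\leq\pi_2$.

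Parts (b)--(d) follow from (a) by reversing and/or complementing, as in the proof of Corollary~\ref{cor-augmentation}, since these operations swap $\op$ with $\om$ and interchange indecomposability with skew indecomposability.  The main obstacle is the step forcing $\alp_0=\alp$ in surjectivity: this is precisely where both the indecomposability of $\alp$ and the hypothesis $\alp\op\sig\not\leq\tau$ are used, and the care needed is in tracking the constraint that the $\sig$-portion of the embedded copy of $\alp\op\sig$ must lie at later positions with larger values than the $\alp$-portion, which is exactly what confines the embedding to a suffix of $\tau_0$'s indecomposable decomposition.
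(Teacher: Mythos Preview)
Your proof is correct and follows essentially the same approach as the paper: define $\phi(\pi)=\alp\op\pi$, establish surjectivity by decomposing an arbitrary $\pi'\in[\alp\op\sig,\alp\op\tau]$ as a direct sum and using indecomposability of $\alp$ together with the hypothesis $\alp\op\sig\not\leq\tau$ to force the $\alp$-prefix, then derive (b)--(d) by symmetry. If anything, you are a bit more careful than the paper, which simply asserts that the bijection is ``easy to check'' to be order-preserving, whereas you explicitly verify that $\phi^{-1}$ is order-preserving (your claim~(ii)).
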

In words, each part says that the interval $[\sig,\tau]$ is isomorphic to its augmentation when the augmented interval does not intersect $[\sig,\tau]$.  For example, referring to Figure~\ref{fig-123-351624}, since $\one \op 123 = 1234 \not\leq 351624$, we get $[123,351624] \cong [1234, 1462735]$.  As the proof below shows, the isomorphism simply sends $\pi$ to $\one\op\pi$.

As in Corollary~\ref{cor-augmentation}, a sequence of the augmentations from Theorem~\ref{thm-isomorphism} preserves isomorphism as long as the relevant conditions are satisfied.  For example, for $\alp$ and $\gam$ indecomposable, we get that $[\sig,\tau]\,\cong\, [\alp\op\sig\op\gam\,,\, \alp\op\tau\op\gam]$ as long as $\alp\op\sig \not\leq \tau$ and $\alp\op\sig\op\gam \not\leq \alp\op\tau$, with the latter condition being equivalent to $\sig\op\gam \not\leq \tau$.
For example, 
\[
[321, 321\op 321]\, \cong\, [312\op 321 \op 231\, ,\, 312 \op 321 \op 321 \op 231].
\]
\begin{proof}[Proof of Theorem~\ref{thm-isomorphism}]
We will prove (a).  The other parts are similar, or can be derived from (a) like in the proof of Corollary~\ref{cor-augmentation}. 

Let $\pi \in [\sig,\tau]$.  Then clearly, $\alp\op\pi \in [\alp\op\sig\,,\,\alp\op\tau]$.  
For the other direction, let $\pi \in  [\alp\op\sig\,,\,\alp\op\tau]$.  We wish to show that $\pi$ is of the form $\alp\op \rho$ for some permutation $\rho \in [\sig,\tau]$.  

Since $\alp\op\sig \leq \pi$ but $\alp\op\sig \not\leq \tau$, when $\pi$ embeds into $\alp\op\tau$, some letters of $\pi$ must embed into the $\alp$ portion of $\alp\op\tau$.  So suppose $\pi = \alp' \op \rho$ where $\emptyset < \alp' \leq \alp$ and $\rho \leq \tau$.  Since $\alp\op\sig \leq \pi$, we have $\alp\op\sig = \alp_1 \op \alp_2 \op \sig$ with $\emptyset \leq \alp_1 \leq \alp'$ and $\alp_2 \op \sig \leq \rho$.  Because $\alp$ is indecomposable, we require $\alp_1 = \alp$ or $\alp_2 = \alp$.   In the latter case, we get $\alp\op\sig \leq \rho \leq \tau$, a contradiction.  Thus $\alp' \leq \alp = \alp_1 \leq \alp'$, and so $\alp'=\alp$ and $\pi = \alp \op \rho$, as required.

We conclude that there is a bijection from $[\sig,\tau]$ to $[\alp\op\sig\,,\,\alp\op\tau]$ that sends $\pi$ to $\alp\op\pi$.  It is easy to check that this bijection is order-preserving.  
\end{proof}

Theorem~\ref{thm-isomorphism}(a) does not identify all isomorphisms of the form $\ist\cong [\alp\op\sig\,,\, \alp\op\tau]$.  As a basic example, we have $[\one, 12] \cong [12, 123]$.  The same is true even if we restrict to disconnected intervals, with $[1324, 1365724] \cong [\one \op 1324\,,\, 1 \op 1365724]$ serving as an example.  Looking at this latter isomorphism, one might wonder if it is often the case that 
\[
[\one^k \op \sig\,,\, \one^k \op \tau]\,  \cong\, [\one^{k+1} \op \sig \,,\, \one^{k+1} \op \tau]
\]
for sufficiently large $k$, where $\one^k$ denotes $1\op1\op\cdots\op1$ with $k$ copies of $\one$.  The next result shows that intervals $[\one^k \op \sig\,,\, \one^k \op \tau]$ eventually stabilize as $k$ increases.

\begin{proposition}\label{prop-addingones}
For any interval $\ist$, we have
\begin{equation}\label{equ-addingones}
[\one^k \op \sig\,,\, \one^k \op \tau]\,  \cong\, [\one^{k+1} \op \sig \,,\, \one^{k+1} \op \tau]
\end{equation}
whenever $k \geq \len{\tau}-\len{\sig}-1$.  In fact, if $\tau$ takes the form $\one^\ell \op \tau'$ for some $\tau'$, then \eqref{equ-addingones} holds whenever $k \geq \len{\tau}-\len{\sig}-\ell-1$.
\end{proposition}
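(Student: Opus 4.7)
The plan is to construct the explicit isomorphism $\phi(\pi) = \one\op\pi$ from $[\one^k\op\sig,\,\one^k\op\tau]$ to $[\one^{k+1}\op\sig,\,\one^{k+1}\op\tau]$. It is straightforward that $\phi$ is well-defined, injective, and order-preserving, using the easy observation that $\one\op\alp \leq \one\op\bet$ iff $\alp \leq \bet$ (any embedding can be adjusted to send the leading $\one$ of the lower element to the leading $\one$ of the upper element). The only substantive step will be surjectivity, and this is where the condition $k \geq \len{\tau}-\len{\sig}-\ell-1$ enters.

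Since the hypothesis weakens as $\ell$ grows, I may assume $\ell$ is chosen maximally, so that $\tau'$ is empty or does not begin with $\one$. Then in $\one^{k+1}\op\tau = \one^{k+1+\ell}\op\tau'$ the initial run is exactly $(1, 2, \ldots, k+1+\ell)$. Surjectivity reduces to showing every $\pi' \in [\one^{k+1}\op\sig,\,\one^{k+1}\op\tau]$ satisfies $\pi'(1) = 1$; for then $\pi' = \one\op\pi$ for a unique $\pi$, and applying the above iff-observation to the two defining inequalities of $\pi'$ places $\pi$ in the original interval.

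To verify $\pi'(1) = 1$, I would write $\pi' = (\one^{k+1}\op\tau) - Z$ with $|Z| \leq \len{\tau}-\len{\sig}$ and split into two cases. If $\{1, \ldots, k+1+\ell\} \not\subseteq Z$, then the smallest kept position $p^*$ lies in the initial run and has value $p^*$, which is strictly smaller than the value at every other kept position; flattening then yields $\pi'(1) = 1$. If instead $\{1, \ldots, k+1+\ell\} \subseteq Z$, then $\pi' \leq \tau'$, and combined with $\pi' \geq \one^{k+1}\op\sig$ this forces $\one^{k+1}\op\sig \leq \tau'$ and, by lengths, $k \leq \len{\tau}-\len{\sig}-\ell-1$. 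The hypothesis pins $k$ at equality, whence the lengths force $\tau' = \one^{k+1}\op\sig$; but this contradicts the maximality of $\ell$, since $\one^{k+1}\op\sig$ begins with $\one$. Hence the second case cannot occur, completing the argument.

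The main obstacle will be the rank-versus-initial-run bookkeeping in the second case: it is precisely the maximality of $\ell$ that permits the boundary $k = \len{\tau}-\len{\sig}-\ell-1$, by ruling out the otherwise-dangerous equality $\tau' = \one^{k+1}\op\sig$. Once the second case is eliminated, $\phi$ is surjective and the proof is complete.
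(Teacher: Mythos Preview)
Your proof is correct and follows essentially the same route as the paper: define $\phi(\pi)=\one\op\pi$, reduce to surjectivity via the order-reflecting property of $\one\op(-)$, and rule out elements $\pi'$ with $\pi'(1)\neq 1$ by a length count against $\tau'$.

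The only real difference is in how the boundary case is dispatched. You assume $\ell$ is maximal, obtain the weak inequality $k \le \len{\tau}-\len{\sig}-\ell-1$, and at equality force $\tau'=\one^{k+1}\op\sig$, contradicting maximality. The paper avoids this detour by noting one extra fact: if $\pi'(1)\neq 1$ then $\pi'$ is certainly not the bottom element $\one^{k+1}\op\sig$ (which does begin with $1$), so $\pi'>\one^{k+1}\op\sig$ strictly, hence $\len{\pi'}\ge k+2+\len{\sig}$. Combined with $\len{\pi'}\le\len{\tau'}=\len{\tau}-\ell$ this yields the \emph{strict} inequality $k<\len{\tau}-\len{\sig}-\ell-1$, contradicting the hypothesis directly, with no need to assume $\ell$ maximal or analyze equality. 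Your argument is fine, just slightly longer than necessary.
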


\begin{proof}
We will prove the latter assertion since it implies the former.  
First observe that permutations $\pi$ and $\pi'$ satisfy $\pi \leq \pi'$ if and only if $\one \op \pi \leq \one \op \pi'$.  Therefore the map that sends $\pi$ to $\one\op\pi$ will give the desired isomorphism whenever it is surjective.  

So suppose we have an element $\pi$ of $[\one^{k+1} \op \sig \,,\, \one^{k+1} \op \tau]$ that is not of the form $\pi = \one\op \pi'$ for some $\pi'$, i.e., $\pi(1) \neq 1$.  Thus when $\pi$ embeds in $\one^{k+1}\op\tau = \one^{k+1}\op\one^\ell\op\tau'$,  it must embed entirely in $\tau'$, implying that $\len{\pi} \leq \len{\tau}-\ell$.  Also, since $\pi > \one^{k+1} \op \sig$, we know that $\len{\pi} > k+1+\len{\sig}$.  Consequently, we have $k+1+\len{\sig} < \len{\tau}-\ell$, and the result follows.
\end{proof}

The bound on $k$ in Proposition~\ref{prop-addingones} is sharp in the sense that there exist cases where $[\one^k \op \sig\,,\, \one^k \op \tau]$ and $[\one^{k+1} \op \sig \,,\, \one^{k+1} \op \tau]$ are not isomorphic when $k=\len{\tau}-\len{\sig}-\ell-2$.  One example with $\ell=0$ is given by $\ist = [132, 213465]$ since
\[
[\one\op132\,,\, \one\op 213465]\, \not\cong\, [\one \op \one\op 132\,,\, \one \op\one\op 213465]
\]
essentially caused by the fact that $213465$ is an element of the latter interval.

\section{Layered permutations and generalized subword order}\label{sec-layered}

The goal of this section is to completely determine disconnectivity and shellability conditions for intervals of layered permutations.  In contrast with Theorem~\ref{thm-disconnected-subinterval}, we will give an infinite
 class of intervals that are shellable.  In fact, our technique will carry through to the more general case of intervals $[u,w]$ in generalized subword order when the ordering on the alphabet $P$ consists of a rooted forest.  We begin with the necessary preliminaries.

\begin{definition}
A permutation is said to be \emph{layered} if the letters of each component of its finest decomposition are decreasing.  
\end{definition}

For example, $32165798 = 321 \op 21 \op \one \op 21$ is layered.  
We see that every layered permutation is uniquely determined by its composition of layer lengths; it will be helpful to think of layered permutations in terms of these compositions. 

To put these compositions in a more general setting, let $P$ be a poset and let $P^*$ denote the set of finite words in the alphabet consisting of the elements of $P$.  We define generalized subword order on $P^*$ as follows.

\begin{definition}
Let $P$ be a poset.  For $u, w \in P^*$, we write $u \leq w$ and say that $u$ is less than or equal to $w$ in \emph{generalized subword order} if there exists a subword $w(i_1)w(i_2)\cdots w(i_k)$ of the same length as $u$ such that
\[
u(j) \leq_P w(i_j) \mbox{\ \ for all $j$ with $1\leq j \leq k$}. 
\]
\end{definition}
Note that we compare $u(j)$ and $w(i_j)$ in the inequality above according to the partial order $P$.  For example, if $P$ is an antichain, then generalized subword order on $P^*$ is equivalent to ordinary subword order.  More importantly for us, if $P$ is the usual order $\mathbb{P}$ on the positive integers, then generalized subword order amounts to pattern containment order on layered permutations.  For example, with $P = \mathbb{P}$, that $112 \leq 3212$ in generalized subword order is equivalent to the inequality $\one \op \one \op 21 \leq 321 \op 21 \op \one \op 21$ for layered permutations, i.e., $1243 \leq 32165798$.  

We will work in the language of generalized subword order throughout the remainder of this section, referring to layered permutations, or equivalently to the $P=\mathbb{P}$ case, from time to time.  Let us introduce some new notation and translate some of our previous notation and terminology to this generalized subword setting.  We will use $P$ throughout to denote our ordered alphabet, and let $P_0$ denote $P$ with a bottom element $0$ adjoined. We will use $\leqpo$ to denote an inequality in $P_0$, and the symbol $\leq$ without a subscript, when applied to words, will represent an inequality in $P^*$.  We will typically use $u$ and $w$ in place of $\sig$ and $\tau$, $\parts{w}$ will denote the number of letters 
of $w$, and $\rk{w}$ will denote the \emph{rank} of $w$ in $P^*$, which is equal to the sum of the ranks of the letters of $w$ in $P_0$.  For example, with $P=\mathbb{P}$, $\parts{3212}=4$ and $\rk{3212} = 8$, which is consistent with the notation $\len{32154687}=8$ for the corresponding layered permutation.  Ranks are defined in the usual way in $P_0$ since we will hereafter restrict to the case where $P$ is a \emph{rooted forest}, meaning that it consists of a disjoint union of trees, each rooted at a unique bottom element.  Equivalently, every element of $P_0$ except $0$ covers exactly one element.  Note that $P$ being a rooted forest includes the cases when $P$ is an antichain or a chain.  

The notion of embedding for compositions will not be an exact extension of the version for layered permutations.  Instead, suppose $u$ and $w$ are words in $(P_0)^*$.  Then $\emb$ is an \emph{embedding} of $u$ in $w$ if $\emb$ is a word in $(P_0)^*$ obtained from $u$ by inserting $\parts{w}-\parts{u}$ zeros such that $\emb(i) \leqpo w(i)$ for $1 \leq i \leq \parts{w}$.  For example, with $P = \mathbb{P}$, $112$ has three embeddings in $32120$, namely $01120$, $10120$ and $11020$.  If there is more than one embedding of $u$ in $w$, then there is always one embedding $\rho$ that is \emph{rightmost}, defined as follows: if $\emb$ is another embedding, and $\rho(i)$ and $\emb(j)$ both correspond to the same letter of $u$, then $i \geq j$.  For example, with $P = \mathbb{P}$, the rightmost embedding of $112$ in $32120$ is $01120$.  

Our first of two main results of this section gives conditions for an open interval $\ouw$ in $P^*$ to be disconnected.  The only implication we will need for later proofs is that (2)$\Rightarrow$(1), which can be proved as (2)$\Rightarrow$(3)$\Rightarrow$(1) without requiring any further preliminaries; the full details are in the relevant portions of the proof below.  However, we need that (1) implies (2) for the assertion we make immediately before Question~\ref{que-decomposable} and, more to the point, a characterization of disconnectivity in the current case is important for its own sake.  A feature of our proof that (1)$\Rightarrow$(3) is that it requires results from \cite{mcnamara-sagan, sagan-vatter} that rely on Forman's discrete Morse theory.  For the relevant background on discrete Morse theory in the current setting, we refer the reader to \cite[\S2]{mcnamara-sagan} for the bare bones or to \cite[\S4]{sagan-vatter} for more of the topological context.  Readers interested in more general background should consult Forman's papers \cite{forman-dmt, forman-dmt2, forman-users-guide}, and Babson and Hersh \cite{babson-hersh} for the theory applied to order complexes of posets.  Next, we describe the ordering of the maximal chains used in \cite{mcnamara-sagan, sagan-vatter}.

We will order the chains lexicographically according to their edge labels, where we always read along chains from top to bottom.  So let us describe how to label the edges of a maximal chain $C$ in an interval $\iuw$ of $P^*$.  Since the edge labels along $C$ will depend on an embedding of each element of $C$ in $w$, we will first identify a canonical such embedding to ensure that the labeling is well defined.  For elements $v$ and $v'$ of $C$ with $v'$ covering $v$, denoted $v' \covers v$, if $v$ and $v'$ have the same number of letters then there is a unique embedding of $v$ in $v'$.  If $\parts{v} = \parts{v'} -1$, then $v$ is obtained from $v'$ by deleting a letter $a$ that is minimal in $P$.  If this $a$ appears in a consecutive sequence of $a$'s that is maximal under containment, then deletion of any of these $a$'s will also yield $v$.  Our convention in this situation will be to always delete the leftmost $a$ in the sequence.  One can check that, equivalently, the resulting embedding of $v$ in $v'$ is the rightmost embedding, although we will not need that fact.  Working from $w$ down $C$, this process defines a canonical embedding of $v$ in $v'$ for each covering relation, and thus inductively defines a canonical embedding of $v$ in $w$ for any element $v$ of $C$.  These latter embeddings depend on $C$, and it will often be convenient to think of $C$ in terms of the embeddings of its elements in $w$, rather than in terms of the elements themselves.  See~\eqref{equ-position-labels} below for an example, where the labels on the edges will be explained next.

A natural chain labeling of $\iuw$ would label the edge $v' \covers v$ along $C$ by the position in $w$ that is decreased or deleted according to the convention of the previous paragraph.  For example, with $P=\mathbb{P}$, 
\begin{equation}\label{equ-position-labels}
3212 \coverslabel{1} 2212 \coverslabel{2} 2112 \coverslabel{2} 2012 \coverslabel{1} 1012.
\end{equation}
This is exactly the labeling used in \cite{sagan-vatter}, and we will call it the \emph{position labeling}.
In \cite{mcnamara-sagan}, the edge labels are pairs $(i,j)$ where $i$ denotes the position to be decreased and $j$ refers to the new letter in that position; since our $P$ is a rooted forest, it turns out that the second label $j$ is unnecessary and the labeling is equivalent to the position labeling.

In the last part of the following theorem, \emph{a minimal skipped interval} is a notion from discrete Morse theory, which is explained for the current context in \cite[\S4]{sagan-vatter}.

\begin{theorem}\label{thm-layered-disconnected} Let $P$ be a rooted forest.  For $u, w \in P^*$ with $\rk{w}-\rk{u} \geq 3$, the following are equivalent:
\begin{enumerate}
\item $\ouw$ is disconnected;
\item\label{item-layered-disconnected} $u$ and $w$ are the concatenations $u = (v_1, a, v_2)$ and $w= (v_1, a, a, v_2)$ for some letter $a \in P$ and for $v_1, v_2 \in P^*$;
\item\label{item-layered-disconnected-embedding} there exists an embedding $\emb$ of $u$ in $w$ such that, for some $i$, $\emb(i)=0$, $w(i-1)=w(i)$ and $w(j)=\emb(j)$ for $j \neq i$; 
\item under the position labeling, $\ouw$ contains a minimal skipped interval (MSI) with the maximal possible number of elements, i.e., $\rk{w}-\rk{u}-1$ elements;
\end{enumerate}
\end{theorem}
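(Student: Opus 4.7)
The plan is to prove the equivalences via the cycle $(2)\Leftrightarrow(3)$, $(3)\Rightarrow(1)$, $(1)\Rightarrow(3)$, and $(3)\Leftrightarrow(4)$: the first two links are combinatorial, while the last two rely on the discrete Morse machinery from \cite{mcnamara-sagan,sagan-vatter}. The equivalence $(2)\Leftrightarrow(3)$ is a bare translation: given $w=(v_1,a,a,v_2)$ and $u=(v_1,a,v_2)$, take the embedding $\emb=(v_1,a,0,v_2)$ with $i=\parts{v_1}+2$, so that $\emb(i)=0$, $w(i-1)=w(i)=a$, and $\emb(j)=w(j)$ for $j\ne i$; conversely, given any embedding witnessing (3), deleting its unique zero from $w$ immediately yields the decomposition in (2).

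For $(3)\Rightarrow(1)$ I would adapt the template of Lemma~\ref{lem-sum-disconn}. Writing $w=(v_1,a,a,v_2)$ and $u=(v_1,a,v_2)$, let $p_1=\parts{v_1}+1$ and $p_2=p_1+1$ be the two positions of $w$ carrying the repeated $a$. I would split the embeddings of $u$ in $w$ into those whose zero set is contained in $\{1,\ldots,p_1\}$ and those whose zero set is contained in $\{p_2,\ldots,\parts{w}\}$, obtaining two non-empty classes $E_1,E_2$, and correspondingly partition $\ouw$ into the subposets reachable from each class. The rank hypothesis $\rk{w}-\rk{u}\ge 3$ then prevents a cover relation in $\ouw$ from crossing between the two subposets: any such crossing would force an embedding of $u$ in $w$ whose zero set meets both sides of the split and can be modified to zero out both $p_1$ and $p_2$, giving $\parts{u}\le\parts{w}-2$, a contradiction. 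The natural analogue of Proposition~\ref{prop-disconnectedtest} for generalized subword order then delivers the disconnection.

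For $(1)\Rightarrow(3)$ I would invoke the chain Morse matching for generalized subword order built in \cite{mcnamara-sagan}, extending \cite{sagan-vatter} beyond the layered case, and argue by contraposition. If no embedding as in (3) exists, the explicit MSI description from those sources constrains the critical cells of the induced Morse matching sufficiently to show that $\D(u,w)$ is either a point or a wedge of spheres of dimension $\rk{w}-\rk{u}-2\ge 1$, and in either case $\ouw$ is connected. Finally, $(3)\Leftrightarrow(4)$ comes out of the same analysis: an MSI of the maximum size $\rk{w}-\rk{u}-1$ corresponds under the position labeling precisely to two adjacent equal edge labels whose erasure recovers $u$, which is exactly condition (3).

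The main obstacle is the discrete-Morse step $(1)\Rightarrow(3)$: a direct combinatorial proof would have to rule out \emph{a priori} many partitions of $\ouw$ into disconnected pieces, and only the structural MSI classification from \cite{mcnamara-sagan,sagan-vatter} lets us localize every possible disconnection to the ``double-letter'' obstruction in (2) and (3). The remaining bookkeeping, matching the position labeling of the present paper with the labeling conventions of those sources, should be straightforward because our rooted-forest hypothesis on $P$ already makes the \cite{mcnamara-sagan} two-coordinate labels collapse to a single position coordinate.
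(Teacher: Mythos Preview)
Your $(2)\Leftrightarrow(3)$ is correct and matches the paper. Your $(3)\Rightarrow(1)$ is in the right spirit but leans on an unstated ``analogue of Proposition~\ref{prop-disconnectedtest}'' for $P^*$; note that in generalized subword order elements of $\ouw$ arise not only by deleting positions of $w$ but also by \emph{decreasing} letters, so the zero-set bookkeeping of Proposition~\ref{prop-disconnectedtest} does not carry over verbatim. The paper bypasses this by partitioning $\ouw$ according to whether the first edge label (under the position labeling) from $w$ is $<i$ or $\ge i$, and then argues directly that a chain starting on one side can only reduce positions on that side, so the two pieces meet only at $u$ and $w$.

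The genuine gap is in your $(1)\Rightarrow(3)$. Your contrapositive asserts that if no embedding as in (3) exists then the Morse analysis of \cite{mcnamara-sagan,sagan-vatter} forces $\D(u,w)$ to be a point or a wedge of spheres of dimension $\rk{w}-\rk{u}-2$. That conclusion would require every MSI in $\iuw$ to be a singleton (cf.\ \cite[Prop.~4.2]{babson-hersh} and Remark~\ref{rem-dmt-proof}), which is equivalent to $\iuw$ having no non-trivial disconnected \emph{sub}interval, a strictly stronger hypothesis than the failure of (3) for $\iuw$ itself. For instance, with $P=\mathbb{P}$, $u=3$ and $w=3311$, condition (3) fails (any embedding of $u$ in $w$ has three zeros, not one), yet $[3,33]\subseteq[3,3311]$ is a rank-$3$ disconnected subinterval, so the Morse complex for $[3,3311]$ has a non-singleton MSI and your intermediate claim is false. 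The paper avoids this entirely by routing through (4): the implication $(1)\Rightarrow(4)$ is a one-line general fact about any poset lexicographic order (the lexicographically first maximal chain whose interior lies in the second component has, by definition, its entire open interval as a single MSI), and then $(4)\Rightarrow(3)$ uses \cite[Lemma~5.3, Prop.~5.7]{sagan-vatter} and \cite[Prop.~3.8]{mcnamara-sagan} to show that a maximal MSI forces \emph{all} labels along the chain to equal a single position $i$ (not merely ``two adjacent equal edge labels'' as you wrote), that the resulting embedding is not rightmost, and that $w(i-1)=w(i)$.
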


Note that item (\ref{item-layered-disconnected}) in Theorem \ref{thm-layered-disconnected} implies that for an interval $\ist$ of layered permutations with $|\tau|-|\sig|\geq3$ to be disconnected, the composition of $\sig$ is obtained from the composition of $\tau$ by deleting a component that has size at least 3 and that is equal to its preceding component in $\tau$.
An example of this is $[215436,215438769]$, with corresponding compositions 231 and 2331,
 respectively.

\begin{proof}[Proof of Theorem~\ref{thm-layered-disconnected}]
We will show that (1)$\Rightarrow$(4)$\Rightarrow$(3)$\Rightarrow$(1), but let us first show that (2)$\Leftrightarrow$(3).  If $u = (v_1, a, v_2)$ and $w= (v_1, a, a, v_2)$, then one embedding of $u$ in $w$ takes the form $(v_1, a, 0, v_2)$, implying (3).  Conversely, (3) implies that $u$ can be obtained from $w$ by deleting a letter that equals its immediate predecessor, which is equivalent to (2).  

We next show that (1)$\Rightarrow$(4).  Suppose $(u,w)$ is disconnected and is the disjoint union of subposets $Q_1$ and $Q_2$.  Then for any poset lexicographic order of the maximal
chains of $\iuw$, suppose without loss of generality that the lexicographically first chain (reading edge labels from top to bottom) has its interior elements in $Q_1$.  If $C$ is the lexicographically first chain with its interior elements in $Q_2$, then the set of all interior elements of $C$, denoted $C(w,u)$, forms a single MSI by definition of MSI.  Clearly there are $\rk{w}-\rk{u}-1$ elements in $C(w,u)$.

To show (4)$\Rightarrow$(3), suppose $C(w,u)$ is an MSI for some maximal chain $C$ of $\iuw$.  Since $\rk{w}-\rk{u}\geq3$, $C(w,u)$ has at least two elements.  By \cite[Lemma~5.3]{sagan-vatter}, the labels along $C$ cannot contain a descent, since otherwise $C$ would have a single-element MSI, contradicting the fact that $C(w,u)$ is an MSI.  By \cite[Prop.~5.7]{sagan-vatter}, the labels along $C$ cannot contain an ascent, since otherwise $C$ would not be critical, contradicting the fact that $C(w,u)$ is an MSI that contains all the interior elements of $C$.  Therefore, along $C$, just a single position $i$ of $w$ is decreased in going to $u$, and let $\emb$ be the resulting embedding of $u$ in $w$.  This puts us in the setting of \cite[Prop.~3.8]{mcnamara-sagan}, which classifies the MSIs of $P^*$ when a single position is decreased. That proposition gives us the following two relevant facts when $P$ is a rooted forest.  The first is that $\emb$ is not the rightmost embedding.  Then \cite[Lemma~3.7]{mcnamara-sagan} tells us that $\emb(i)=0$ and $w(i-1) \leqpo w(i)$.  The second fact is that $w(i-1)$ cannot be strictly below $w(i)$ in $P_0$.  We conclude that $w(i-1) = w(i)$, and we have arrived at (3).

Finally, we show (3)$\Rightarrow$(1).  Let $C$ be the maximal chain that obtains $u$ from $w$ 
by reducing position $i$ to 0, i.e., by reducing $w(i)$ repeatedly until it becomes a minimal element of $P$, and then deleting that minimal element.  We say that $C$ \emph{zeroes out} position $i$. Since $\rk{w}-\rk{u}\geq3$, we know $w(i)=w(i-1)$ is not a minimal element of $P$, and so $C$ obeys the convention of always zeroing out the leftmost position in a consecutive sequence of some minimal element of $P$.    Under the position labeling, let $Q_1$ consist of all those elements on maximal chains of $[u,w]$ whose first label (at the top) is less than $i$.  Note that $Q_1$ is nonempty since the chain $C'$ that zeroes out position $i-1$ of $w$ is a maximal chain from $w$ down to $u$ that is contained in $Q_1$.  (If $C'$ does not obey our convention about zeroing out positions, then there will be another chain in $Q_1$ that does obey the convention.)  Similarly, let $Q_2$ consist of all those elements on maximal chains of $[w,u]$ whose first label is at least $i$.  In particular, $C$ is contained in $Q_2$.  We wish to show that $Q_1$ and $Q_2$ intersect only at $w$ and $u$.  

Let $C_1 \in Q_1$ and $C_2 \in Q_2$ be arbitrary.  We know that 
\[
u = w(1)\cdots w(i-1) w(i+1) \cdots w(\parts{w}).
\]  
We also know $C_1$ starts at the top by reducing $w(j)$ for some $j<i$. To eventually arrive at $u$, $C_1$ must zero out position $j$.  Since $\parts{u} = \parts{w}-1$, $C_1$ can zero out only one position.  Thus $C_1$ cannot reduce any of the portion $w(i+1)\cdots w(\parts{w})$ if it is to eventually arrive at $u$.  Similarly, $C_2$ zeros out some $w(j)$ for $j \geq i$ and cannot reduce any of the portion $w(1)\cdots w(i-1)$. 

Let $v$ be the first element strictly below $w$ at which $C_1$ and $C_2$ intersect.  We wish to show that $v=u$.  Since $v \in \iuw$, we know $v$ has $\parts{w}$ or $\parts{w}-1$ letters.  By the discussion of the previous paragraph and since $v \in C_1 \cap C_2$, either $v=u$ or $v$ takes the form
\[
v = w(1)\cdots w(i-1)\, a\, w(i+1) \cdots w(\parts{w}),
\]
with $a \in P$.  In the latter case, $v$ can only be obtained from $w$ by reducing $w(i)$, contradicting the fact that $v \in C_1$.   We conclude that $v=u$ as required.    
\end{proof}

As in Theorem~\ref{thm-disconnected-subinterval}, we know that if an interval
 contains a non-trivial disconnected subinterval, then it is not shellable.   It is natural to ask which intervals $[u,w]$ in $P^*$ without such disconnected subintervals are shellable.  Our second main result of this section, Theorem~\ref{thm-layered-shellable}, tells us that when $P$ is a rooted forest, all such intervals are shellable.  This result is a companion to a result from \cite{mcnamara-sagan}, which states that if $P_0$ is finite and has rank at most 2, then any interval in $P^*$ is shellable.  

In order to use Theorem~\ref{thm-layered-shellable} as a test for shellability, we will first extract from Theorem~\ref{thm-layered-disconnected} a criterion for an interval $\iuw$ to contain no disconnected \emph{subintervals}.  It is simpler to state the negated version of such a result.

\begin{proposition}\label{pro-disconnected-subinterval}
Let $P$ be a rooted forest.  An interval $\iuw$ in $P^*$ contains a non-trivial disconnected subinterval if and only if there exits an embedding $\emb$ of $u$ in $w$, an element $a \in P_0$ of rank at least 3, and positions $i<j$, that satisfy all of the following conditions:
\begin{itemize}
\item $w(i) \geqpo a \geqpo \emb(i)$;
\item $w(j) \geqpo a$;
\item $\emb(i+1),\ldots, \emb(j)$ are all zero.
\end{itemize}
\end{proposition}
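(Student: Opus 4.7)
The plan is to derive both directions of the equivalence from Theorem~\ref{thm-layered-disconnected}, which characterises disconnected subintervals of rank at least~3 as exactly those of the form $[(v_1,a,v_2),(v_1,a,a,v_2)]$ with $a\in P$ of rank at least~3 in $P_0$. The strategy for each direction is to translate between such a ``repeated-letter'' subinterval and the embedding data described in the proposition.

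For the forward direction, I would take a non-trivial disconnected subinterval $[u',w']\subseteq\iuw$ and apply Theorem~\ref{thm-layered-disconnected} to write $u'=(v_1,a,v_2)$ and $w'=(v_1,a,a,v_2)$ with $a$ of rank at least~3 in $P_0$. Fixing any embedding of $w'$ into $w$ pins down two positions $i<j$ of $w$ at which the two copies of $a$ sit, so automatically $w(i)\geqpo a$ and $w(j)\geqpo a$. I would then embed $u'$ into $w'$ using the \emph{first} copy of $a$ (so that the position $j$ of $w$ is left unused) and compose with any embedding of $u$ into $u'$; call the resulting embedding of $u$ in $w$ by $\eta$. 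By construction $\eta(i)\leqpo a$, since $\eta(i)$ is the image in $u$ of the $a$-coordinate of $u'$, and the positions $i+1,\ldots,j$ of $w$ are untouched by the composition, so $\eta(i+1)=\cdots=\eta(j)=0$.

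For the converse, given data $\eta,a,i,j$ satisfying the three bullets, I would form $u_1$ by concatenating the nonzero entries among $\eta(1),\ldots,\eta(i-1)$ and $u_2$ by concatenating the nonzero entries among $\eta(j+1),\ldots,\eta(\parts{w})$, so that $u$ reassembles as either $u_1\cdot\eta(i)\cdot u_2$ or, when $\eta(i)=0$, simply $u_1u_2$. I then set $u':=(u_1,a,u_2)$ and $w':=(u_1,a,a,u_2)$. The chain $u\leq u'\leq w'\leq w$ is immediate: $u\leq u'$ holds because $\eta(i)\leqpo a$; $u'\leq w'$ is trivial; and $w'\leq w$ holds by reusing $\eta$'s assignment for the letters of $u_1$ and $u_2$ and placing the two new copies of $a$ at positions $i$ and $j$, which is legitimate since $w(i),w(j)\geqpo a$. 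Because the rank of $[u',w']$ equals the rank of $a$ in $P_0$, Theorem~\ref{thm-layered-disconnected} then certifies $[u',w']$ as a non-trivial disconnected subinterval of $\iuw$.

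The only real piece of bookkeeping is the case split on whether $\eta(i)=0$ or $\eta(i)$ is a letter of $u$ below $a$; this affects the syntactic reassembly of $u$ from $u_1,\eta(i),u_2$ but in all cases yields $u\leq u'$, so it is an obstacle in presentation only, not in substance.
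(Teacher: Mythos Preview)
Your proposal is correct and follows essentially the same approach as the paper: both directions translate between the embedding data and the ``repeated-letter'' description of disconnected subintervals via Theorem~\ref{thm-layered-disconnected}, constructing $u'=(u_1,a,u_2)$ and $w'=(u_1,a,a,u_2)$ from $\eta$ in one direction and composing embeddings $u\hookrightarrow u'\hookrightarrow w'\hookrightarrow w$ in the other. The only cosmetic difference is that the paper invokes condition~(\ref{item-layered-disconnected-embedding}) of Theorem~\ref{thm-layered-disconnected} in the forward direction while you use condition~(\ref{item-layered-disconnected}), but these are immediately equivalent.
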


For example, with $P=\mathbb{P}$, the interval $[121, 23141]$ satisfies the conditions of the proposition by taking $\emb=12001$, $a=3$, $i=2$ and $j=4$.  The disconnected subinterval given in the first paragraph of the proof below is $[131, 1331]$ (there is one other, namely $[231,2331]$).  On the other hand, $[141, 23141]$ has no disconnected subintervals since we can readily check that none of the three embeddings $10041$, $01041$, $00141$ satisfies the conditions of the proposition.  Thus Theorem~\ref{thm-layered-shellable} will tell us that $[141, 23141]$ is shellable.  

\begin{proof}[Proof of Proposition~\ref{pro-disconnected-subinterval}]
If the conditions are satisfied, let $\emb'$ be the embedding such that $\emb'(i)=a$ and $\emb'(k)=\emb(k)$ for $k \neq i$.  Let $\emb''$ be defined by $\emb''(i)=\emb''(j)=a$ and $\emb''(k)=\emb(k)$ for $k \neq i,j$.  Let $u'$ and $w'$ be the permutations corresponding to $\emb'$ and $\emb''$ respectively.  Then $u \leq u' < w' \leq w$, and $u'$ and $w'$ satisfy~(\ref{item-layered-disconnected}) of Theorem~\ref{thm-layered-disconnected}.  Therefore $[u',w']$ is a non-trivial disconnected subinterval of $[u,w]$.  

Now suppose $[u',w']$ is a non-trivial disconnected subinterval of $[u,w]$.  In particular, $|w'|-|u'| \geq 3$.  By (\ref{item-layered-disconnected-embedding}) of Theorem~\ref{thm-layered-disconnected}, there exists an embedding $\rho$ of $u'$ in $w'$ and a position $r$ such that $\rho(r)=0$, $w'(r-1)=w'(r)=\rho(r-1)=a$ and $\rho(k)=w'(k)$ for $k\neq r$.  Since $|w'|-|u'| \geq 3$, we get that $a$ has rank at least 3 in $P_0$.  When we embed $w'$ in $w$, suppose $w'(r-1)$ matches up with $w(i)$ and $w'(r)$ matches up with $w(j)$ for some $i,j$.  Such an embedding, along with $\rho$, induces an embedding $\emb'$ of $u'$ in $w$.  We see that we must have $\emb'(i) = a$ and $\emb'(i+1), \ldots \emb'(j)$ all equal to zero.  Then $\emb'$ along with any embedding of $u$ in $u'$ will induce an embedding $\emb$ of $u$ in $w$ that satisfies the conditions of the proposition.
\end{proof}

We will prove shellability using the notion of CL-shellability, introduced by Bj\"orner and Wachs \cite{bjorner-wachs-coxeter-groups}, where it is called ``L-shellability'' and where chains are read from top to bottom.  We will follow what is now the customary definition of CL-shellability from \cite{bjorner-wachs-lex-shellable-posets}, where chains are instead read from bottom to top.  Because our chain labeling will be read from top to bottom, we will actually show that the dual of the interval $[u,w]$ is CL-shellable and hence shellable; this implies the shellability of $[u,w]$ since the order complex of $[u,w]$ is clearly isomorphic to that of its dual.  In this case, we say that $[u,w]$ is \emph{dual CL-shellable}.  The conditions needed to show that $[u,w]$ is dual CL-shellable are stated at the end of the first paragraph of the proof below of Theorem~\ref{thm-layered-shellable}.  Readers interested in a more detailed exposition of CL-shellability (and a wealth of other information about poset topology) are referred to \cite{wachs-lectures}.

\begin{theorem}\label{thm-layered-shellable}
Let $P$ be a rooted forest.  Suppose an interval $\iuw$ in $P^*$ does not contain a non-trivial disconnected subinterval.  Then $\iuw$ is dual CL-shellable.  
\end{theorem}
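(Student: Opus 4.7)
The plan is to show dual CL-shellability of $[u,w]$ using the position labeling from \cite{sagan-vatter, mcnamara-sagan}. Given a rooted chain from $w$ down to an element $y$ of $[u,w]$, each element of a subinterval $[x,y]$ inherits a canonical embedding into $w$ via the leftmost-deletion convention described in the setup of Theorem~\ref{thm-layered-disconnected}, so edges of maximal chains in a rooted subinterval $[x,y]_r$ receive well-defined position labels.

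For each rooted subinterval $[x,y]_r$, the candidate ``critical'' chain is built greedily: process the positions of $w$ affected between $y$ and $x$ from right to left, reducing and then deleting each chosen position before moving to the next. By construction, its label sequence read from top to bottom is weakly decreasing, which translates to the unique weakly-increasing-chain requirement of dual CL-shellability after reversing chain direction.

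The proof then amounts to verifying two claims. First, this greedy chain is the unique maximal chain in $[x,y]_r$ whose labels are weakly decreasing from top to bottom. Second, it is strictly lex-minimal among all maximal chains of $[x,y]_r$ in the appropriate ordering. The second claim follows from the ``ascent-swap'' technique of \cite[Prop.~5.7]{sagan-vatter}: any chain with an ascent in its labels admits a local swap producing a lex-smaller chain, so iterating yields the greedy chain as the lex-minimum. The first claim is where the hypothesis is essential. If a second chain $C'$ in $[x,y]_r$ also had weakly decreasing labels, then arguing as in the implication (4)$\Rightarrow$(3) in the proof of Theorem~\ref{thm-layered-disconnected}, together with \cite[Prop.~3.8]{mcnamara-sagan}, would force an embedding of $x$ in $y$ satisfying condition~(\ref{item-layered-disconnected-embedding}) of Theorem~\ref{thm-layered-disconnected} inside $[x,y]$, from which Proposition~\ref{pro-disconnected-subinterval} produces a non-trivial disconnected subinterval of $[u,w]$, contradicting the hypothesis.

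The main obstacle I anticipate is the bookkeeping around canonical embeddings when $[x,y]$ admits several embeddings of $x$ in $y$. The greedy chain depends on which embedding is chosen, and this choice must interact coherently with the rooted chain that determines the labeling. I expect to proceed by induction on $\rk{y} - \rk{x}$, peeling off one position at a time, and to exploit the rooted-forest structure of $P$ so that each non-minimal letter has a unique parent; this guarantees that at each step the rightmost ``reducible'' position is well defined and that the greedy construction commutes with restriction to subintervals, which is needed to make the CL-labeling conditions checkable one rooted interval at a time.
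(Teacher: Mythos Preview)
Your proposal has a genuine gap: the unmodified position labeling is \emph{not} a dual CL-labeling, even under the hypothesis that $[u,w]$ contains no non-trivial disconnected subinterval. The paper itself flags this (see the discussion around Figure~\ref{fig-bad-cl-interval}). With $P=\mathbb{P}$, take $[u,w]=[22,222]$, which has rank $2$ and hence vacuously satisfies the hypothesis. Under the position labeling the three maximal chains receive label sequences $(1,1)$, $(2,2)$, $(3,3)$, all of which are simultaneously weakly increasing and weakly decreasing; your ``first claim'' --- uniqueness of the weakly decreasing chain in every rooted interval --- is therefore false here, and no choice of total order on labels repairs it. The same obstruction appears inside larger intervals satisfying the hypothesis (e.g.\ $[22,2221]$), so it cannot be dismissed as a low-rank boundary case.

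Your proposed contradiction mechanism also breaks exactly at this point. You plan to feed a second weakly decreasing chain into the (4)$\Rightarrow$(3) argument of Theorem~\ref{thm-layered-disconnected} and then invoke Proposition~\ref{pro-disconnected-subinterval} to produce a non-trivial disconnected subinterval. But Proposition~\ref{pro-disconnected-subinterval} requires the repeated letter $a$ to have rank at least $3$ in $P_0$; when the repeated letter has rank $2$, as in $[22,222]$, the proposition yields nothing, and indeed no disconnected subinterval exists. The paper's proof handles precisely this boundary by modifying the position labeling: whenever one reduces the $i$-th $b$ (for $i>1$) in a maximal consecutive run of a rank-$2$ letter $b$ to its unique child $a$ and then deletes that $a$, the deletion label $k$ is replaced by $\decrease{k}$ with $k-1<\decrease{k}<k$. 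With this tweak the greedy leftmost-reduction chain is shown to be the unique increasing chain and lex-first; the no-disconnected-subinterval hypothesis is invoked only when the repeated letter has rank $\geq 3$, while the rank-$1$ and rank-$2$ cases are handled respectively by the leftmost-deletion convention and by the $\decrease{k}$ modification. Without an analogous device at rank-$2$ letters, your argument cannot close.
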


Before proving Theorem~\ref{thm-layered-shellable}, it will be helpful to introduce and give relevant terminology for the chain labeling we will use.  We would like to use the position labeling described immediately before Theorem~\ref{thm-layered-disconnected} as our chain labeling.  
Unfortunately, this labeling is too simple to give a CL-labeling, as illustrated by Figure~\ref{fig-bad-cl-interval}(a) for the case $P=\mathbb{P}$, where all three maximal chains are weakly increasing from top to bottom.  
\begin{figure}[htbp]
\begin{center}
\begin{tikzpicture}[scale=1.5]
\tikzstyle{every node}=[shape=circle, inner sep=2pt]; 
\begin{scope}
\draw (1,0) node[draw] (a1) {22};
\draw (0,1) node[draw] (a2) {122};
\draw (1,1) node[draw] (a3) {212};
\draw (2,1) node[draw] (a4) {221};
\draw (1,2) node[draw] (a5) {222};
\draw (a1) -- (a2) --  (a5);
\draw (a1) -- (a3) --  (a5);
\draw (a1) -- (a4) --  (a5);
\draw (0.3,0.5) node {1};
\draw (0.3,1.5) node {1};
\draw (0.9,0.5) node {2};
\draw (0.9,1.5) node {2};
\draw (1.7,0.5) node {3};
\draw (1.7,1.5) node {3};
\draw (1,-0.8) node {(a)};
\end{scope}
\begin{scope}[xshift=4cm]
\draw (1,0) node[draw] (a1) {22};
\draw (0,1) node[draw] (a2) {122};
\draw (1,1) node[draw] (a3) {212};
\draw (2,1) node[draw] (a4) {221};
\draw (1,2) node[draw] (a5) {222};
\draw (a1) -- (a2) --  (a5);
\draw (a1) -- (a3) --  (a5);
\draw (a1) -- (a4) --  (a5);
\draw (0.3,0.5) node {1};
\draw (0.3,1.5) node {1};
\draw (0.86,0.5) node {$\decrease{2}$};
\draw (0.9,1.5) node {2};
\draw (1.7,0.5) node {$\decrease{3}$};
\draw (1.7,1.5) node {3};
\draw (1,-0.8) node {(b)};
\end{scope}
\end{tikzpicture}
\caption{Labeling according to the position decreased as in (a) can result in more than one increasing chain.  In this case, we modify the labels so that only the chain that deletes the leftmost 2 has increasing labels, as in (b).}
\label{fig-bad-cl-interval}
\end{center}
\end{figure}
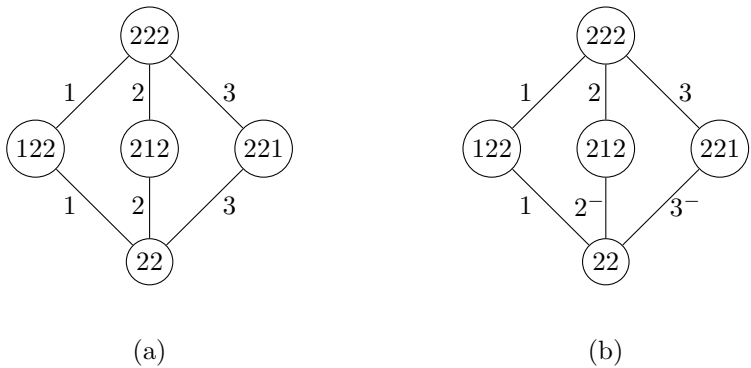
To rectify this situation, we make the following special modification to the position labeling.  Suppose 
$w \covers v \covers u$ and $w$ has a consecutive sequence of $b$'s that is maximal under inclusion, where $b$ is an element of rank 2 in $P_0$.  Since $P$ is a rooted forest, $b$ covers a unique element $a$ in $P$, and $a$ is a minimal element of $P$.  Suppose that the $i$-th of these $b$'s in the consecutive sequence in $w$ is decreased to $a$ in going to $v$ and then that $a$ is deleted in going to $u$.  If $i>1$, then change the label $k$ on $v \covers u$ to $\decrease{k}$, where $k-1 < \decrease{k} < k$ (if we prefer to be specific, $\decrease{k} = k-0.5$ will certainly suffice).  The result is that only the chain that deletes the leftmost $b$ in the consecutive sequence gets weakly increasing labels from top to bottom in $\iuw$.  An example of this modified labeling in the case $P=\mathbb{P}$ is shown in Figure~\ref{fig-bad-cl-interval}(b).  While this modification may seem somewhat arbitrary, we will see in the proof below that it is exactly what we need to get a dual CL-labeling.  We will call the labeling just described the  \emph{modified position labeling}.

\begin{proof}[Proof of Theorem~\ref{thm-layered-shellable}]
We wish to show that the modified position labeling is a dual CL-labeling.  Let $[v,v']_r$ be a top-rooted interval in $\iuw$.  Following the chain $r$ from $w$ to $v'$ gives $v'$ a particular embedding $\emb$ in $w$.  We wish to show that there is a unique increasing maximal chain from $\emb$ to an embedding of $v$ in $\emb$, and that this increasing chain has the lexicographically first labels of all maximal chains in $[v,v']_r$.  None of these conditions to be checked will be affected if we discard any letters of $\emb$ that are zero, and assume that $\emb$ has only nonzero letters.  Therefore, we lose no generality by taking $\emb=w$ and $v=u$.   Thus we will show dual CL-shellability by showing that there is a unique increasing maximal chain from $w$ to an embedding of $u$ in $w$, and that this increasing chain has the lexicographically first labels of all maximal chains in $[u,w]$.

Let a maximal chain $C$ be defined in the following way:  
starting with $w$, decrease the leftmost letter possible such that the result will still be above $u$.  
For example, if $w=2211$ and $u=2$ with $P=\mathbb{P}$, then $C$ is given by
\[
2211 \coverslabel{1} 1211 \coverslabel{1} 0211 \coverslabel{3} 0201 \coverslabel{4} 0200.
\]
We must check several aspects of $C$.
\begin{itemize}
\item Since we are decreasing the leftmost possible letter at each stage, any deletion of a letter from a maximal consecutive sequence of $a$'s, where $a$ is a minimal element of $P$, will respect the convention of deleting the leftmost such $a$.
\item For the same reason, $C$ will eventually arrive at the rightmost embedding $\rho$ of $u$.  
Indeed, suppose $C$ eventually arrived at an embedding $\emb$ of $u$ that was not rightmost, and let $i$ be the leftmost position where $\emb$ differs from the rightmost embedding $\rho$.  Since each letter of an embedding of $u$ is either zero or a particular letter of $u$, and $\rho$ is rightmost, it must be the case that $\rho(i)=0$ and $\emb(i)\neq0$.  This is a contradiction since the definition of $C$ implies that $\emb(i)$ should have been decreased to 0 in this case.
\item For an element $b$ of rank 2 in $P_0$, if we encounter a maximal sequence of consecutive $b$'s and one such $b$ is to be decreased to 0 in two steps, we will always decrease the leftmost such $b$.  In particular, the labels along $C$ will not undergo any of the modifications that change a label $k$ to $\decrease{k}$.
\item Since we always decrease letters as far left as possible, the labels along $C$ will be increasing.  For the same reason, $C$ is clearly the lexicographically least maximal chain in $\iuw$.
\end{itemize}

It remains to show that $C$ is the only increasing chain from $w$ down to $u$.  Consider another chain $C'$ whose labels are increasing.  If $C'$ ends at the rightmost embedding $\rho$ of $u$ in $w$, then $C'$ must decrease the same letters of $w$ as $C$ and by the same amounts.  Since both chains are increasing, $C'$ must then be identical to $C$.  Therefore, suppose $C$ ends at an embedding $\emb$ of $u$ with $\emb \neq \rho$.  Find the rightmost position $j$ at which $\rho$ and $\emb$ differ.  Since each position of an embedding of $u$ is either 0 or a particular letter of $u$, and since $\rho$ is rightmost, it must be the case that $\emb(j)=0$ and $\rho(j) = u(k)  \neq 0$ for some $k$.  Working left from position $j$, the next nonzero entry of $\emb$ must be $\emb(i)=u(k)$ for some $i$.  Note that $w(i), w(j) \geq u(k)$.  The setup for $w$, $\rho$ and $\emb$ can be summarized as 
\[
\arraycolsep=0pt
\begin{array}{rlclcccccccl}
w =  &\ (w(1), &\ldots&, w(i), &\ldots&,w(j-1)& ,& w(j)&,& \ldots& ,w(\parts{w})), \\
\rho=  &\ (\rho(1), &\ldots&, \rho(i), &\ldots&, \rho(j-1)&,& u(k)&,& \rho(j+1), \ldots&, \rho(\parts{w})), \\
\emb=  &\ (\emb(1), &\ldots, \emb(i-1)&, u(k), &0,0,& \ldots &,&0&,&  \rho(j+1), \ldots &,\rho(\parts{w})).
\end{array}
\]
Since $C'$ has increasing labels, during the process of decreasing the letter in position $j$ of $w$, it must at some point encounter elements $v_1$, $v_2$ with $v_1 > v_2$ that embed in $w$ as
\[
\left(\emb(1), \ldots, \emb(i-1), u(k), 0, 0, \ldots, 0, u(k), w(j+1), \ldots, w(\parts{w}) \right)
\]
and
\begin{equation}\label{equ-v2-embedding}
 \left(\emb(1), \ldots, \emb(i-1), u(k), 0, 0, \ldots, 0, 0, w(j+1), \ldots, w(\parts{w}) \right)
\end{equation}
respectively.
There are three cases to consider.  If $u(k)=a$ where $a$ is a minimal element of $P$, then the convention dictates that the $a$ in position $i$ of $v_1$ (or an $a$ even further left) should have been decreased instead of the $a$ in position $j$, contradicting the fact that~\eqref{equ-v2-embedding} is the embedding of $v_2$ corresponding to $C'$.  If $u(k)=b$ where $b$ has rank 2 in $P_0$, then the labels on the edges from $v_1$ down to $v_2$ will be $j, \decrease{j}$ in that order, contradicting the fact that $C'$ is increasing.  If $u(k)>3$, then the open interval $(v_1,v_2)$ is in the situation of (\ref{item-layered-disconnected}) of Theorem~\ref{thm-layered-disconnected}, contradicting our hypothesis that $\iuw$ does not contain a non-trivial disconnected subinterval.  
\end{proof}

\begin{remark}\label{rem-dmt-proof}
When $P$ is a rooted forest, ideas from discrete Morse theory give an alternative way to show that an interval $[u,w]$ in $P^*$ is shellable if it does not contain a non-trivial disconnected subinterval.  
Let us give a sketch of the proof for readers familiar with discrete Morse theory.  Suppose $[u,w]$ contains no non-trivial disconnected subintervals and that, under the position labeling, a maximal chain $C$ from $w$ to $u$ contains an MSI $C(v',v)$ with more than one element.  In particular, $\rk{v'}-\rk{v}\geq3$.  Restrict to the interval $(v, v')$ and discard positions where $v'$ is zero, adjusting the edge labels accordingly.  Then we are in the situation of Condition (4) of Theorem~\ref{thm-layered-disconnected}.  Thus $(v, v')$ is disconnected, contradicting the fact that $[u,w]$ contains no non-trivial disconnected subintervals.  We conclude that all MSIs in $[u,w]$ have just a single element, in which case \cite[Prop.~4.2]{babson-hersh} implies $[u,w]$ is shellable.

Although this discrete Morse theoretic proof certainly has the advantage of being short, it does not give an explicit CL-labeling like our original proof.  A further advantage of our original proof is that it uses more classical ideas, and so might be more accessible to many readers.  One might also speculate that our original proof would have a better chance of being generalized; see Subsection~\ref{subsec-separable} for a discussion of the case of \emph{separable} permutations.
\end{remark}

\begin{remark}
It is worth comparing Theorem~\ref{thm-layered-shellable} to a similar shellability result in the literature that applies to all posets.  As noted by Wachs \cite{wachs}, a result of Billera and Myers \cite{billera-myers} implies that any poset is shellable if it is $(2+2)$-free, meaning it does not contain the 4-element poset consisting of two disjoint 2-element chains as an induced subposet.  The converse result does not hold as shown, for example, by the Boolean lattice of rank 3.  More importantly for us, Theorem~\ref{thm-layered-shellable} is not implied by \cite{billera-myers} since there are examples of intervals in $P^*$ that do not contain non-trivial disconnected subintervals and are not $(2+2)$-free.  In other words, the hypotheses of Theorem~\ref{thm-layered-shellable} apply, but \cite{billera-myers} does not.  One example is $[11,221]$ when $P=\mathbb{P}$, corresponding to the interval $[12,21435]$ in $\clp$.
\end{remark}

As a consequence of shellability, for $P$ a rooted forest, we get that any interval $[u,w] \in P^*$ that does not contain a non-trivial disconnected subinterval is homotopic to a wedge of $|\mu(u,w)|$ spheres, each of the top dimension $\rk{w}-\rk{u}-2$.  Therefore, we know the homotopy type completely since a formula for $\mu(u,w)$ is given in \cite{sagan-vatter}.  A formula for $\mu(u,w)$ for general $P$ is the main result of \cite{mcnamara-sagan}.  Modifying this latter formula for the case of decomposable permutations is the subject of the next section.  
\section{The M\"obius function of decomposable intervals}\label{mob-decomp}

Suppose $\tau$ is a decomposable permutation and let $\tau = \tau_1 \op \cdots \op \tau_t$ be its finest decomposition throughout this section.  Results in \cite[Prop.~1 and~2]{mob-sep} give recurrences  that reduce the computation of the M\"obius function $\mst$ to M\"obius function calculations of the form $\mu(\sig', \tau')$ where $\tau'$ is a single component of $\tau$ and $\sig'$ is a direct sum of consecutive components of $\sig$.  For example, a corollary of these results of \cite{mob-sep} is that if $\sig$ is indecomposable, then $\mst$ is either 0 or $\pm\mu(\sig,\tau_1)$, depending on the form of $\tau$.  

A disadvantage of the results of \cite{mob-sep} is that the recurrences are given in the form of two different propositions, one for the case $\tau_1=\one$ and one for $\tau_1 >\one$; the formulas for $\mst$ in the two propositions look very different, as shown below in Propositions~\ref{prop-bjjs1} and~\ref{prop-bjjs2}.  We now state our new formula, which replaces the two propositions by a single recursive expression for $\mst$.   

\begin{proposition}\label{prop-mobius}
Consider permutations $\sig$ and $\tau$ and let $\tau=\tau_1 \op \cdots \op \tau_t$ be the finest decomposition of $\tau$.  Then
\begin{equation}\label{equ-mobius}
\mu(\sig, \tau) = \sum_{\sig = \sg_1 \op \cdots \op \sg_t}\ \prod_{1 \leq m \leq t} 
\left\{ 
\begin{array}{ll} 
\mu(\sg_m\,,\tau_m) +1 & \mbox{if $\sg_m = \emptyset$ and $\tau_{m-1}=\tau_m$}\,, \\ 
\mu(\sg_m\,,\tau_m) & \mbox{otherwise},
\end{array} \right. 
\end{equation}
where the sum is over all direct sums $\sig = \sg_1 \op \cdots \op \sg_t$ such that $\emptyset \leq \sg_m \leq \tau_m$ for all $1 \leq m \leq t$. 
\end{proposition}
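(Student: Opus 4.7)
The plan is to derive the unified formula~\eqref{equ-mobius} by verifying that it specializes to each of the two case-specific recurrences of Burstein et al.~\cite{mob-sep} (stated below as Propositions~\ref{prop-bjjs1} and~\ref{prop-bjjs2}), which split according to whether $\tau_1 = \one$ or $\tau_1 > \one$. Conceptually, \eqref{equ-mobius} is really an expansion: for a fixed decomposition $\sig = \sg_1 \op \cdots \op \sg_t$, the product distributes over the indices $m$ at which the piecewise factor has two possible values, so the entire right-hand side can be rewritten as a signed sum of products of ordinary $\mu(\sg_m, \tau_m)$ factors, indexed by pairs consisting of a decomposition of $\sig$ together with a choice of index set where the ``$+1$'' contribution is taken. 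The task is to identify this reorganized sum with the correction terms appearing in~\cite{mob-sep}.

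I would first handle the case $\tau_1 > \one$. Here~\cite[Prop.~2]{mob-sep} writes $\mst$ as a main sum over componentwise decompositions of $\sig$ matched against $\tau_1 \op \cdots \op \tau_t$, plus corrections at each index $m \geq 2$ with $\tau_{m-1} = \tau_m$. Expanding~\eqref{equ-mobius} by, at each such $m$, distributing between the $\mu(\sg_m, \tau_m)$ contribution and the $+1$ contribution (with $\sg_m = \emptyset$ whenever the $+1$ is chosen) yields precisely the main term and corrections in their stated form. For the case $\tau_1 = \one$, the same expansion is carried out, but now the algebraic identity $\mu(\emptyset, \one) + 1 = 0$ forces any summand with $\sg_m = \emptyset$ at an index where $\tau_{m-1} = \tau_m = \one$ to vanish; this cancellation is exactly the behavior built into~\cite[Prop.~1]{mob-sep} to handle a leading run of singletons in $\tau$, so the identification is again term-by-term.

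The main obstacle is the bookkeeping: one must establish a bijection between the indexing of runs of equal components used in the case-specific propositions of~\cite{mob-sep} and the index sets of ``free choices'' produced by expanding the product in~\eqref{equ-mobius}. Once such a bijection is made explicit, the identity follows by matching summands, with the single algebraic ingredient $\mu(\emptyset, \one) = -1$ accounting for the vanishing in the $\tau_1 = \one$ case. As a sanity check before writing the proof in full, I would verify~\eqref{equ-mobius} on small examples such as $\tau = \one \op \one \op \one$ and $\tau = 21 \op 21$, and against one example in each case where $\tau$ has a mixture of singleton and non-singleton components with repetitions, to confirm agreement with the values given by Propositions~\ref{prop-bjjs1} and~\ref{prop-bjjs2}.
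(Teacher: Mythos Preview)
Your overall strategy coincides with the paper's: show that the right-hand side of~\eqref{equ-mobius} reproduces each of Propositions~\ref{prop-bjjs1} and~\ref{prop-bjjs2}. The algebraic inputs you identify are also the ones the paper uses, namely $\mu(\emptyset,\one)+1=0$ in the $\tau_1=\one$ case, and $\mu(\emptyset,\tau_m)=0$ (hence $\mu(\emptyset,\tau_m)+1=1$) when $\tau_m>\one$.

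However, there is a genuine gap in your plan for the case $\tau_1>\one$. Proposition~\ref{prop-bjjs2} does \emph{not} state $\mst$ as ``a main sum over componentwise decompositions of $\sig$ matched against $\tau_1\op\cdots\op\tau_t$, plus corrections''; it is the recursion
\[
\mst=\sum_{i=1}^{s}\sum_{j=1}^{k}\mu(\sig_{\leq i},\tau_1)\,\mu(\sig_{>i},\tau_{>j}),
\]
in which $\mu(\sig_{>i},\tau_{>j})$ is itself a M\"obius value in $\clp$, not a product of factors $\mu(\sg_m,\tau_m)$. Thus, after you expand~\eqref{equ-mobius} into a sum of products of such factors, there is nothing on the Proposition~\ref{prop-bjjs2} side to match those products against term by term: the two expressions live at different levels of the recursion. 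The ``bijection between indexings'' you sketch does not close up on its own.

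What the paper does instead is use induction on $t$. It fixes $\sg_1=\sig_{\leq i}$ (noting $\sg_1=\emptyset$ contributes $0$ since $\tau_1>\one$), then tracks the maximal $j$ with $\sg_2=\cdots=\sg_j=\emptyset$, using $\mu(\emptyset,\tau_1)+1=1$ to collapse those factors along the initial run $\tau_1=\cdots=\tau_k$. Only then is the remaining inner sum $\sum_{\sg_{j+1}\op\cdots\op\sg_t}\prod_{m>j}(\cdots)$ identified with $\mu(\sig_{>i},\tau_{>j})$ via the induction hypothesis (i.e.,~\eqref{equ-mobius} applied to $\tau_{>j}$). The same inductive step is used in the $\tau_1=\one$ case to obtain the terms $\mu(\sig_{>k-1},\tau_{>k})$ and $\mu(\sig_{>k},\tau_{>k})$ of Proposition~\ref{prop-bjjs1}. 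Your proposal should make this induction explicit; without it, neither case reduces to the target recurrence.
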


The condition $\tau_{m-1}=\tau_m$ is considered false when $m=1$ since $\tau_0$ does not exist.  Proposition~\ref{prop-mobius} is inspired by, and is an exact analogue of, the formula from \cite{mcnamara-sagan} for the M\"obius function for generalized subword order.  Unfortunately, we have not been able to find a way to obtain Proposition~\ref{prop-mobius} as an application of the formula for generalized subword order.  Instead, we will prove Proposition~\ref{prop-mobius} by showing that it gives the same recursive expressions for $\mu(\sig, \tau)$ as the propositions of \cite{mob-sep}.  Before doing so, let us give an example of Proposition~\ref{prop-mobius}.

\begin{example} As an example of how Proposition~\ref{prop-mobius} can be used, we compute $\mu(12,24136857)=\mu(12\,,\,2413\op2413).$  It is straightforward to compute by hand that $\mu(12, 2413)=3$ and $\mu(\one, 2413)=-3$.  We also know that $\mu(\emptyset, \tau)=0$ for any $\tau>\one$.  On the other hand, $[12\,,\,2413\op2413]$ has 62 elements and 223 edges, meaning that computing $\mu(12, 24136857)$ directly is a much less pleasant exercise.  Instead, applying Proposition~\ref{prop-mobius},     there are three terms in the sum:
\begin{itemize}
\item $12 = \one \op \one$ contributes $\mu(\one,2413)\mu(\one,2413) = 9$;
\item $12 = \emptyset \op 12$ contributes $\mu(\emptyset,2413)\mu(12,2413) = 0$;
\item $12 = 12 \op \emptyset$ contributes $\mu(12, 2413)(\mu(\emptyset, 2413)+1)=3$, with the $+1$ arising because we have $\sg_2=\emptyset$ and $\tau_1 = \tau_2$. 
\end{itemize} 
Therefore $\mu(12, 24136857)=12$.  
\end{example}

For the purposes of comparison and since they are needed in our proof of Proposition~\ref{prop-mobius}, we next give the two propositions from \cite{mob-sep}.  For a finest decomposition $\tau=\tau_1 \op \cdots \op \tau_t$, we will use the notation $\tau_{\leq i} = \tau_1 \op \cdots \op \tau_i$ and $\tau_{>i} = \tau_{i+1} \op \cdots \op \tau_t$, with $\tau_{\geq i}$ defined similarly.  The first proposition covers the case $\tau_1 = \one$.  

\begin{proposition}[Proposition 1 of \cite{mob-sep}]\label{prop-bjjs1}
Let $\sig$ and $\tau$ be nonempty permutations with finest decompositions $\sig = \sig_1 \op \cdots \op \sig_s$ and $\tau=\tau_1 \op \cdots \op \tau_t$, where $t \geq2$.  Suppose that $\tau_1=\one$.  Let $k \geq 1$ be the largest integer such that all the components $\tau_1, \ldots, \tau_k$ are equal to $\one$, and let $\ell \geq 0$ be the largest integer such that all the components $\sig_1, \ldots, \sig_\ell$ are equal to $\one$.  Then
\[
\mst = \left\{ \begin{array}{ll} 
0 & \mbox{if $k-1>\ell$}\,,\\
-\mu(\sig_{>k-1}\,, \tau_{>k}) & \mbox{if $k-1=\ell$}\,,\\
\mu(\sig_{>k}\,,\tau_{>k}) - \mu(\sig_{>k-1}\,,\tau_{>k}) & \mbox{if $k-1<\ell$}\,.
\end{array}
\right.
\]
\end{proposition}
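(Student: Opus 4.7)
The plan is to prove the proposition by strong induction on $\len{\tau}$, using the defining Möbius recurrence
\[
\mu(\sig, \tau) = -\sum_{\sig \le \pi < \tau} \mu(\sig, \pi).
\]
For the inductive step, I would stratify the elements $\pi \in [\sig, \tau)$ by the number $j(\pi) \ge 0$ of leading $\one$-components in the finest decomposition of $\pi$, writing $\pi = \one^{j(\pi)} \op \pi''$ with $\pi''$ empty or beginning with a non-$\one$ component. Using the standard fact that each indecomposable component of a permutation must embed into a single component of any permutation containing it (applied both to $\sig \le \pi$ and to $\pi \le \tau$), the constraint $\pi \le \tau = \one^k \op \tau_{>k}$ forces $j(\pi) \le k$ and $\pi'' \le \tau_{>k}$, while $\sig \le \pi$ restricts which pairs $(j(\pi), \pi'')$ can occur in terms of $\ell$.

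Next, I would apply the inductive hypothesis to each $\mu(\sig, \pi)$ in the sum. Whenever $\pi$ has at least two components of its finest decomposition with $\pi_1 = \one$, the proposition itself applies with $j(\pi)$ playing the role of $k$ (and $\ell$ unchanged). For the remaining $\pi$, namely those that are indecomposable or satisfy $j(\pi) = 0$, the Möbius function $\mu(\sig, \pi)$ is either directly computable or is handled by the companion Proposition~\ref{prop-bjjs2} (which covers $\tau_1 > \one$); for this reason, it is cleanest to prove both recurrences simultaneously by a common induction on $\len{\tau}$.

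Fixing $\pi''$ and summing over the allowed values of $j(\pi)$, the iterated Möbius values should collapse via the very three-case structure being proved. The expected behavior is as follows. If $k-1 > \ell$, consecutive values of $j(\pi)$ pair up via a natural involution (sending $j \leftrightarrow j+1$ in the regime where both contribute non-trivially) and their contributions cancel, yielding total zero. If $k - 1 = \ell$, the telescoping leaves a single surviving term equal to $-\mu(\sig_{>k-1}, \tau_{>k})$, corresponding to the unique boundary configuration where all $\ell = k-1$ leading $\one$'s of $\sig$ are absorbed by $\one^{k-1} \op \tau_{>k}$ and the remainder of $\sig$ embeds into $\tau_{>k}$. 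If $k - 1 < \ell$, two boundary contributions survive to produce the required difference $\mu(\sig_{>k}, \tau_{>k}) - \mu(\sig_{>k-1}, \tau_{>k})$.

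The main obstacle will be executing the cancellation in the case $k-1 > \ell$, where the formal argument requires careful bookkeeping: an explicit pairing of terms indexed by $(j(\pi), \pi'')$ with their ``off-by-one'' counterparts, relying on the asymmetry built into the three-case formula. A secondary challenge is arranging the joint induction so that every $\mu(\sig,\pi)$ appearing in the recurrence either falls under one of Propositions~\ref{prop-bjjs1}--\ref{prop-bjjs2} at a smaller instance, or else reduces to a known base case such as $\pi$ indecomposable with $\sig = \emb$ or $\sig$ indecomposable.
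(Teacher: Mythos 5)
This proposition is stated in the paper as a cited result --- Proposition~1 of Burstein et al.~\cite{mob-sep} --- and the present paper does not reprove it; it only quotes Propositions~\ref{prop-bjjs1} and~\ref{prop-bjjs2} as known facts in order to verify the unified recurrence of Proposition~\ref{prop-mobius}. There is therefore no in-paper proof to compare your attempt against.

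Evaluating your attempt on its own terms: the broad plan (induction on $\len{\tau}$ via the M\"obius recurrence, stratifying $\pi\in[\sig,\tau)$ by the length of its leading block of $\one$'s, then telescoping) is a reasonable direction, but one of your stated steps is false and the rest is too sketchy to verify. You claim that $\pi\leq\tau=\one^k\op\tau_{>k}$ forces $j(\pi)\leq k$. It does not: a leading $\one$-component of $\pi$ may embed into a non-$\one$ component of $\tau$, not only into the initial block of $\one$'s. Concretely, take $\tau=132=\one\op 21$, so $k=1$, and $\pi=12=\one\op\one$, so $j(\pi)=2$; then $\pi<\tau$ but $j(\pi)>k$. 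Thus your stratification does not correctly enumerate $[\sig,\tau)$ by pairs $(j,\pi'')$ with $j\leq k$ and $\pi''\leq\tau_{>k}$, and the cancellations you anticipate would miss such elements. (The second half of your claim, that $\pi''\leq\tau_{>k}$, is correct, since the first non-$\one$ component of $\pi$ must land in $\tau_{>k}$ and drags the rest of $\pi''$ with it.) Beyond this, the crux of the argument --- the explicit pairing or telescoping that produces the three-case answer, especially the claimed involution giving total cancellation when $k-1>\ell$ --- is described only as ``expected behavior'' rather than carried out, so even after repairing the stratification most of the proof remains to be supplied.
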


The remaining case is $\tau_1 >\one$ and is covered by the next proposition.

\begin{proposition}[Proposition 2 of \cite{mob-sep}]\label{prop-bjjs2}
Let $\sig$ and $\tau$ be nonempty permutations with finest decompositions $\sig = \sig_1 \op \cdots \op \sig_s$ and $\tau=\tau_1 \op \cdots \op \tau_t$, where $t \geq2$.  Suppose that $\tau_1>\one$.  Let $k \geq 1$ the the largest integer such that all the components $\tau_1, \ldots, \tau_k$ are equal to $\tau_1$.  Then
\[
\mst = \sum_{i=1}^s \sum_{j=1}^k \mu(\sig_{\leq i}\,,\tau_1)\mu(\sig_{>i}\,,\tau_{>j}).
\]
\end{proposition}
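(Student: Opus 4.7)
The plan is to proceed by induction on $\len{\tau} - \len{\sig}$, using the Möbius recurrence $\mu(\sig,\pi) = -\sum_{\sig \le \rho < \pi}\mu(\sig,\rho)$ together with a structural analysis of the interval $\ist$ based on the direct-sum decomposition of $\tau = \tau_1 \op \cdots \op \tau_t$. The base case is trivial, and the inductive step will be driven by the indecomposability of each $\tau_\ell$ (which follows from the hypothesis that $\tau_1 \op \cdots \op \tau_t$ is the \emph{finest} decomposition).

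The first main step is a structural lemma: since each $\tau_\ell$ is indecomposable, any $\pi \in \ist$ admits a (unique) coarse decomposition $\pi = \pi^{(1)} \op \cdots \op \pi^{(t)}$ with $\pi^{(\ell)} \le \tau_\ell$ for each $\ell$, where some $\pi^{(\ell)}$ may be empty. Indeed, an indecomposable component of $\pi$ cannot straddle a boundary between two consecutive $\tau_\ell$'s without becoming decomposable itself. An inequality $\sig \le \pi$ then corresponds to a weakly increasing assignment from $\sig_1,\ldots,\sig_s$ to the indices $1,\ldots,t$, where each group of consecutive $\sig_m$'s sent to index $\ell$ assembles to a direct sum $\le \pi^{(\ell)}$. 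This gives $\ist$ a ``near-product'' structure on which the Möbius function should factor.

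Using this structure, I would partition $\ost$ according to split data $(i,j)$ with $1 \le i \le s$ and $1 \le j \le k$, where $i$ records the index at which $\sig$'s components split between the initial block of equal parts $\tau_1 = \cdots = \tau_k$ and the suffix $\tau_{>k}$, and $j$ records the last copy of $\tau_1$ into which $\sig_{\le i}$ embeds. By the near-product structure, the contribution to the Möbius recurrence coming from each $(i,j)$-block factors, after applying the inductive hypothesis to each strictly smaller factor interval, as $\mu(\sig_{\le i}, \tau_1) \cdot \mu(\sig_{>i}, \tau_{>j})$. Collecting over all valid $(i,j)$ then yields the claimed double sum, with the convention that $\mu(\sig_{\le i}, \tau_1) = 0$ whenever $\sig_{\le i} \not\le \tau_1$ automatically pruning inadmissible splits.

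The main obstacle will be the interchangeability of the $k$ identical copies of $\tau_1$: a single $\pi$ may be reached via several different split data $(i,j)$, so the natural recurrence overcounts. The proof must verify that this overcounting cancels exactly via the defining property $\sum_\pi \mu(\sig,\pi) = 0$ applied internally to each cluster of equivalent embeddings. A clean route is to first establish the special case $\tau = \tau_1^{\op k}$ directly (where the second factor $\mu(\sig_{>i}, \tau_{>j})$ simplifies dramatically since $\tau_{>k} = \emptyset$), and then extend to general $\tau$ by a secondary induction on $t - k$, absorbing the nontrivial suffix into the second factor of each term. The hypothesis $\tau_1 > \one$ is essential here, as it prevents a degenerate ``sliding'' of single letters across the repeated block, which would otherwise collapse the two-index sum into the more complicated expression appearing in Proposition~\ref{prop-bjjs1}.
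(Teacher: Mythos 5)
First, note that the paper at hand does not prove this proposition; it quotes it from \cite{mob-sep} and uses it (together with Proposition~\ref{prop-bjjs1}) to derive the unified Proposition~\ref{prop-mobius}.  So there is no in-paper proof to compare against, and your proposal must be judged on its own terms.

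Your high-level plan has a genuine gap precisely at the step you flag as ``the main obstacle,'' and I do not think your proposed remedy is more than a restatement of the difficulty.  Two specific problems.  First, the structural lemma you lean on is not correct as stated: the decomposition $\pi = \pi^{(1)} \op \cdots \op \pi^{(t)}$ with $\pi^{(\ell)} \leq \tau_\ell$ \emph{exists}, but it is emphatically \emph{not unique}; e.g., for $\tau = 21\op 21$ and $\pi = 21$, both $21\op\emptyset$ and $\emptyset\op 21$ are valid.  Since uniqueness is exactly what would make your ``partition of $\ost$ according to split data $(i,j)$'' well-defined, the near-product structure you invoke does not actually exist, and the factorization ``the contribution from each $(i,j)$-block factors'' has nothing to rest on.  Second, your gloss of the indices $(i,j)$ does not match the formula you are trying to prove.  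You describe $i$ as ``the index at which $\sig$'s components split between the initial block $\tau_1 = \cdots = \tau_k$ and the suffix $\tau_{>k}$,'' but the first factor in the claimed identity is $\mu(\sig_{\leq i}, \tau_1)$, i.e., $\sig_{\leq i}$ is compared against a \emph{single} copy of $\tau_1$, not against the whole block $\tau_1^{\op k}$; and $\sig_{>i}$ is compared against $\tau_{>j}$, which still contains $k-j$ copies of $\tau_1$.  So $i$ marks the part of $\sig$ that is assigned to one particular copy of $\tau_1$ (say the $j$-th), and $j$ marks which copy — a quite different bookkeeping than what you describe.  The overcounting you mention (a single $\pi$ is reachable via many split data) is the whole content of the proposition, and ``apply $\sum_{\sig\leq\pi\leq\tau}\mu(\sig,\pi)=0$ internally to each cluster'' is not yet an argument: one would need to identify the clusters, show their Möbius sums telescope to the claimed $(i,j)$-terms, and check that this is compatible with your double induction on $\len{\tau}-\len{\sig}$ and $t-k$.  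Until the uniqueness issue is replaced by a canonical choice (e.g., leftmost or rightmost embedding) and the resulting inclusion--exclusion is worked out explicitly, this is a plausible plan but not a proof.
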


Since reversal of permutations preserves containment, all three propositions remain true when decompositions are replaced by skew decompositions and direct sums by skew sums.  

Although Propositions~\ref{prop-bjjs1} and~\ref{prop-bjjs2} as stated in \cite{mob-sep} require that $\tau$ be decomposable, we can check that they also give correct expressions for the M\"obius function even when $\tau$ is indecomposable, i.e., $t=1$.  This allows us to use $t=1$ as the base case in the induction parts of the proof below.  Observe also that the decomposition $\sig=\sg_1 \op \cdots \op \sg_t$ appearing in Proposition~\ref{prop-mobius} has the same number of components as the finest decomposition of $\tau$ but is otherwise arbitrary and can include empty components.  On the other hand, the decomposition of $\sig$ appearing in Propositions~\ref{prop-bjjs1} and~\ref{prop-bjjs2} is the finest decomposition.  This difference is the reason for our choice of different characters for the components of the two decompositions.

\begin{proof}[Proof of Proposition~\ref{prop-mobius}]
We first consider the case $\tau_1=\one$ and adopt the notation of Proposition~\ref{prop-bjjs1}.  Suppose first that $k-1 >\ell$.  Then in every decomposition $\sg=\sg_1 \op \cdots \op \sg_t$ of Proposition~\ref{prop-mobius}, there must exist $m$ with $2 \leq m \leq k$ such that $\sg_m=\emptyset$ and $\tau_{m-1}=\tau_m=\one$.  This $m$ will contribute $-1+1$ to the product of~\eqref{equ-mobius}, consistent with Proposition~\ref{prop-bjjs1}.  

If $k-1=\ell$, for $\sig=\sg_1 \op \cdots \op \sg_t$ to contribute a nonzero amount to the sum, it must be the case that $\sg_1=\emptyset$ and $\sg_2 = \cdots = \sg_k=\one$ to avoid the situation of the previous paragraph.  We first note that if $k=t$, then Propositions~\ref{prop-mobius} and~\ref{prop-bjjs1} give equal values for $\mst$.  From here on, it will be helpful to abbreviate the expression 
\begin{equation}\label{equ-twocases}
\left\{ 
\begin{array}{ll} 
\mu(\sg_m\,,\tau_m) +1 & \mbox{if $\sg_m = \emptyset$ and $\tau_{m-1}=\tau_m$}\,, \\ 
\mu(\sg_m\,,\tau_m) & \mbox{otherwise}
\end{array} \right.
\end{equation}
from Proposition~\ref{prop-mobius} by $\twocases$.
For $k<t$, Proposition~\ref{prop-mobius} gives
\begin{align*}
\mu(\sig, \tau) =  & \sum_{\sig = \sg_1 \op \cdots \op \sg_t}\ \prod_{1 \leq m \leq t} 
\twocases \\
=& \sum_{\sig = \emptyset \op \one \op \cdots \op \one \op \sg_{k+1} \cdots \op \sg_t} (-1)(+1)^{k-1} \prod_{k+1 \leq m \leq t} 
\twocases \\
=& -\sum_{\sg_{k+1}\op \cdots \op \sg_t}\ \prod_{k+1 \leq m \leq t} 
\twocases \\
=& -\mu(\sig_{>k-1}\,, \tau_{>k}),
\end{align*}
with the last equality being by induction in Proposition~\ref{prop-mobius} on $t$, the number of components in the finest decomposition of $\tau$.  Proposition~\ref{prop-mobius} clearly holds when $t=1$.

Now suppose $k-1 < \ell$.  For $\sig=\sg_1 \op \cdots \op \sg_t$ to contribute a nonzero amount to~\eqref{equ-mobius}, we again require that $\sg_2 = \cdots = \sg_k=\one$ but we can now have $\sg_1 = \emptyset$ or $\sg_1 = \one$.  The first possibility will contribute $-\mu(\sig_{>k-1}\,, \tau_{>k})$ as above.  A very similar calculation shows that the second possibility will contribute $\mu(\sig_{>k}, \tau_{>k})$.

We now consider the trickier case $\tau_1>\one$ and refer to Proposition~\ref{prop-bjjs2}.  In an embedding of $\sig$ in $\tau$, we will have the portion $\sig_1 \op \cdots \op \sig_i$ of $\sig$ embedding in $\tau_1$ for some $0 \leq i \leq s$.  Moving to the setting of Proposition~\ref{prop-mobius}, this situation corresponds to $\sg_1 = \sig_1 \op \cdots \op \sig_i$.  If $i=0$, then $\sg_1=\emptyset$ in Proposition~\ref{prop-mobius} and it will contribute $\mu(\emptyset, \tau_1)=0$ to the product in~\eqref{equ-mobius}.  So we can assume $1 \leq i \leq s$ and the right-hand side of~\eqref{equ-mobius} becomes
\begin{equation}\label{equ-mobius2}
\sum_{i=1}^s \mu(\sig_{\leq i}\,,\tau_1) \sum_{\sig_{>i} = \sg_2 \op \dots \op \sg_t}\ \prod_{2 \leq m\leq t} 
\twocases.
\end{equation}
Next, consider the fact that we must have $\sg_2 = \sg_3 = \cdots =\sg_j = \emptyset$ for some maximal $j$ with $1 \leq j \leq t$ (where $j=1$ just means that $\sg_2 \neq \emptyset$).  There are two cases to consider, namely $j\leq k$ and $j > k$.

If $j \leq k$ then the contribution of $m$ with $2 \leq m \leq j$ to the product in~\eqref{equ-mobius2} will be $\mu(\emptyset,\tau_1)+1 = 1$, and so these values can be ignored in the product.  Therefore, the portion of~\eqref{equ-mobius2} corresponding to the $j\leq k$ case can be written as
\begin{equation}\label{equ-small-j-contrib}
\sum_{i=1}^s \mu(\sig_{\leq i}\,,\tau_1) \sum_{j=1}^k\ \sum_{\sig_{>i} = \sg_{j+1} \op \dots \op \sg_t}\ \prod_{j+1 \leq m\leq t} 
\twocases,
\end{equation}
with the additional condition on the third sum that $\sg_{j+1} \neq \emptyset$.  

If $j>k$, then the contribution of $m$ with $2 \leq m \leq k$ to the product in~\eqref{equ-mobius2} can be ignored like before.  Therefore, the portion of~\eqref{equ-mobius2} corresponding to the $j>k$ case can be written as
\begin{equation}\label{equ-big-j-contrib}
\sum_{i=1}^s \mu(\sig_{\leq i}\,,\tau_1) \sum_{\sig_{>i} = \sg_{k+1} \op \dots \op \sg_t}\ \prod_{k+1 \leq m\leq t} 
\twocases,
\end{equation}
now with the additional condition on the second sum that $\sg_{k+1}=\emptyset$.  

Combining~\eqref{equ-small-j-contrib} and~\eqref{equ-big-j-contrib}, we can rewrite~\eqref{equ-mobius2} as
\begin{equation}\label{equ-j-combined}
\sum_{i=1}^s \mu(\sig_{\leq i}\,,\tau_1) \sum_{j=1}^k\ \sum_{\sig_{>i} = \sg_{j+1} \op \dots \op \sg_t}\ \prod_{j+1 \leq m\leq t} 
\twocases,
\end{equation}
with the additional condition on the third sum that $\sg_{j+1} \neq \emptyset$ when $j<k$.  This additional condition ensures that the condition ``$\sg_m = \emptyset$ and $\tau_{m-1}=\tau_m$'' of~\eqref{equ-twocases} will never be satisfied by the first term of the product in~\eqref{equ-j-combined}, just like in the expression for $\mst$ of Proposition~\ref{prop-mobius}.  By induction on the number of components~$t$ in the finest decomposition of $\tau$, \eqref{equ-j-combined} becomes
\begin{equation}\label{equ-bjjs2-as-required}
\sum_{i=1}^s \mu(\sig_{\leq i}\,,\tau_1) \sum_{j=1}^k \mu(\sig_{>i}\,, \tau_{>j}),
\end{equation}
as required. 

It is easily checked that Propositions~\ref{prop-mobius} and~\ref{prop-bjjs2} both give $\mu(\sig, \tau_1)$ in the base case $t=1$ of the induction.  An incisive reader may notice that the argument above has the potential to run into technical difficulties in~\eqref{equ-small-j-contrib}, \eqref{equ-big-j-contrib} and \eqref{equ-j-combined} in the case when $k=t$, i.e., $\tau = \tau_1 \op \cdots \op \tau_1$.  The proof above will work fine in this case except when $j\geq k$, which amounts to $j=k$ since $j \leq t$.  Note that $j=t$ then also dictates that $i=s$ by the original definition of $j$.  In this situation, following through our ideas from above, the portion of~\eqref{equ-mobius2} corresponding to $i=s$ and $j=t=k$ is $\mu(\sig, \tau_1)$, which remains consistent with~\eqref{equ-bjjs2-as-required}.
\end{proof}

\section{Open problems}\label{sec-open}

\subsection{Preservation of disconnectivity under diminution}

It is natural to wonder if any converse results exist for Corollary~\ref{cor-augmentation}.  For example, suppose $\tau$ can be decomposed as $\tau = \tau_1 \op\tau_2 \op\cdots\op\tau_t$ and $\sig=\tau_1 \op\sig'$ for some $\sig'$.  Is it true that if $(\sig,\tau)$ is disconnected, then so is 
$(\sig'\,,\, \tau_2\op\cdots\op\tau_t)$?  The answer is ``no'' due, for example, to the fact that $(321\,,\, 321\op321)$ is disconnected, but $(\emptyset, 321)$ is not.  The answer is still ``no'' if we insist that $\sig'\neq\emptyset$, since $(231\op312\,,\,231\op231\op312)$ is disconnected, but $(312\,,\,231\op312)$ is not.  However, in the previous example, if instead of deleting the 231 from the front, we delete the 312 from the end to yield $(231\,,\, 231\op231)$, then disconnectivity is preserved.  The answer to the following question is ``yes'' for all $\len{\tau} \leq 10$, and for layered permutations by Theorem~\ref{thm-layered-disconnected}. 

\begin{question}\label{que-decomposable}
Suppose $\sig$ and $\tau$ are decomposable permutations with $\len{\tau}-\len{\sig}\geq3$ and 
with finest decompositions $\sig=\sig_1 \op \cdots\op\sig_s$ and $\tau=\tau_1\op\cdots\op\tau_t$.  If $(\sig,\tau)$ is disconnected, then is at least one of the following two statements true:
\begin{itemize}
\item $\sig_1 = \tau_1$ and $(\sig_2\op\cdots\op\sig_s\,,\,\tau_2\op\cdots\op\tau_t)$ is disconnected;
\item $\sig_s = \tau_t$ and $(\sig_1\op\cdots\op\sig_{s-1}\,,\,\tau_1\op\cdots\op\tau_{t-1})$ is disconnected?
\end{itemize} 
\end{question}

Note that this question does not just ask about preservation of disconnectivity under deletion of certain elements, but also asks if the finest decompositions have matching first or last parts when $(\sig,\tau)$ is disconnected.  The answer is ``no'' if we allow $\len{\tau}-\len{\sig}=2$, as shown by the interval $(12, 2143)$.  An affirmative answer to Question~\ref{que-decomposable} would imply that all disconnected $(\sig,\tau)$ with $\sig$ and $\tau$ decomposable and $\len{\tau}-\len{\sig}\geq3$ can be viewed as consequences of parts (a) and (c) of Corollary~\ref{cor-augmentation}.  Obviously, a similar question can be asked about skew decompositions.

\subsection{Non-shellable intervals without disconnected subintervals}

In view of Theorem~\ref{thm-layered-shellable}, it is natural to ask if there exist intervals $[\sig,\tau]$ that are not shellable but have no non-trivial disconnected subintervals.  While we do not have a good way to  test shellability computationally, 
we can test whether a poset is Cohen-Macaulay, i.e., whether all the homology is in the top dimension, which is implied by shellability.  The first intervals $\ist$ that have no non-trivial disconnected subintervals but are not Cohen-Macaulay, and thus not shellable, occur when $\len{\tau}=7$.  One such example is $[123, 3416725]$.  It would be interesting to determine if there is something simple about the structure of such intervals that implies their non-shellability.  

\subsection{Separable permutations}\label{subsec-separable}

By Theorem~\ref{thm-layered-shellable}, we know that an interval of layered permutations of rank at least 3 is shellable if and only if it does not contain any non-trivial disconnected subintervals.  Does the same property hold for any larger class of intervals?  It does not hold in general for $[\sig,\tau]$, and not even with $\sig$ and $\tau$ decomposable, since $[\one \op 123, \one\op 3416725]$ is not shellable but has no non-trivial disconnected subintervals.  Moreover, the interval $[\one\op123,\one\op3416725]$ is \emph{not} isomorphic to $[123,3416725]$, so the non-shellability of the former interval is not a trivial consequence of the non-shellability of the latter one (where 3416725 is indecomposable).

Layered permutations are special cases of \emph{separable} permutations.  A permutation is separable if it can be generated from the permutation $\one$ by successive sums and skew sums.  In other words, a permutation is separable if it is equal to $\one$ or can be expressed as the sum or skew sum of separable permutations.  For example, $52143 = \one \om ((\one\om\one)\op(\one\om\one))$.  Equivalently, a permutation is separable if it avoids the patterns 2413 and 3142 (see \cite{bose-buss-lubiw}).  Consequently, if $\tau$ is separable, then any $\sig \leq \tau$ is also separable.  

\begin{conjecture}\label{conj-sep-shell}
An interval $[\sig,\tau]$ of separable permutations with $\len{\tau}-\len{\sig} \geq 3$ is shellable if and only if it has no non-trivial disconnected subintervals.  
\end{conjecture}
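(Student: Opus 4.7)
The ``only if'' direction is immediate from Bj\"orner's result \cite[Prop.~4.2]{bjorner} that disconnected intervals of rank at least $3$ are not shellable, together with the fact that subintervals of shellable intervals are shellable. For the ``if'' direction, the plan is to mimic, as closely as possible, the dual CL-shellability argument behind Theorem~\ref{thm-layered-shellable}, with the separation tree of $\tau$ playing the role that the composition of layer lengths played for layered permutations.

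First I would establish a separable analog of Theorem~\ref{thm-layered-disconnected}: an exact combinatorial characterization of those open intervals $(\sig',\tau')$ of separable permutations that are disconnected, phrased in terms of embeddings and the separation trees of $\sig'$ and $\tau'$. The expected form is that $(\sig',\tau')$ is disconnected precisely when $\sig'$ is obtained from $\tau'$ by removing a subtree that is identical to, and sits adjacent to, another subtree in the separation tree of $\tau'$---a natural generalization of the repeated-run condition~(\ref{item-layered-disconnected}) of Theorem~\ref{thm-layered-disconnected}. Given such a characterization, the no-nontrivial-disconnected-subinterval hypothesis becomes a local condition on pairs of matched sibling subtrees, much as in the layered case.

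Next I would define a chain labeling of $\ist$.  For each maximal chain $C$, propagate a canonical embedding top-down from $\tau$ using the convention that among all ways a covering relation can be realized, we always take the ``leftmost-in-tree'' choice; then label each edge by the position of $\tau$ being decreased or deleted, with the $k \mapsto \decrease{k}$ modification of Section~\ref{sec-layered} applied whenever the move occurs inside a block of identical sibling subtrees.  By the greedy argument used in the proof of Theorem~\ref{thm-layered-shellable}, the goal would be to show that in every top-rooted interval there is a unique lex-first increasing maximal chain, and that the only way a competing chain could also be increasing is for the region where it diverges from the greedy one to support a disconnected subinterval of rank at least $3$, contradicting the hypothesis.

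The main obstacle will be the last step: showing that such divergences always witness a non-trivial disconnected subinterval.  In the layered case this reduced cleanly to condition~(\ref{item-layered-disconnected}) of Theorem~\ref{thm-layered-disconnected} because positions sit on a line and the only obstruction is two adjacent equal blocks.  For separable permutations, the interleaving of $\op$ and $\om$ in the separation tree creates much richer potential obstructions, and the canonical embedding has to be chosen so that every such obstruction really does produce a disconnected rank-$3$ subinterval, rather than a connected but awkward one.  I would expect this step to require a careful induction on the separation tree of $\tau$, using Corollary~\ref{cor-augmentation} and Theorem~\ref{thm-isomorphism} to reduce to irreducible configurations, together with a separable-adapted refinement of Proposition~\ref{prop-disconnectedtest}.
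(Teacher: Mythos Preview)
The statement you are trying to prove is a \emph{conjecture} in the paper, not a theorem; the paper offers no proof of the ``if'' direction beyond computational evidence (Cohen--Macaulayness for $\len{\tau}\leq 9$, sign-alternation of the M\"obius function for $\len{\tau}\leq 10$) and the trivial rank-3 case.  So there is no ``paper's own proof'' to compare against.

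What the paper \emph{does} say about your proposed approach is directly relevant, however.  In Subsection~\ref{subsec-separable} the authors explicitly consider extending the proof of Theorem~\ref{thm-layered-shellable} to separable permutations and report failure: ``we have been unable to characterize shellability in the case of intervals of separable permutations.''  They identify a concrete obstruction that your sketch glosses over, namely that ``the idea of the rightmost embedding does not extend immediately to separable permutations,'' with the example that it is unclear whether $10002$ or $01200$ should be the canonical embedding of $12$ in $14532$.  Your ``leftmost-in-tree'' convention is an attempt to resolve exactly this ambiguity, but you have not argued that any such convention actually yields a unique increasing chain in each rooted interval.

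More fundamentally, your first step---a separable analogue of Theorem~\ref{thm-layered-disconnected} characterizing disconnected intervals via adjacent identical sibling subtrees---is itself open.  The paper proves only the easy direction (Lemma~\ref{lem-separable-disconnected}, that such a configuration implies disconnectivity) and explicitly asks whether the converse holds.  Without that converse, your final step cannot go through: when a competing increasing chain diverges from the greedy one, you need to know that the resulting local configuration is \emph{exactly} one of your forbidden adjacent-subtree situations, and you have no such classification.  Your last paragraph candidly flags this as ``the main obstacle,'' which is accurate; but that means what you have written is a plausible program, not a proof.
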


It was shown in \cite[Cor. 24 and 25]{mob-sep} that for a separable permutation $\tau$, the M\"obius function $\mu(\one,\tau)$ can only take the values 0, 1 and $-1$, and that the same is true of $\mst$ if $\sig$ occurs precisely once in $\tau$.  If true, Conjecture \ref{conj-sep-shell} would therefore imply that, for such $\sig$ and $\tau$, intervals $[\one,\tau]$ and $\ist$ are each either contractible or homotopy equivalent to a single sphere (of dimension $\len{\tau}-3$ and $\len{\tau}-\len{\sig}-2$, respectively).
  
As in Theorem~\ref{thm-disconnected-subinterval},
the ``only if'' direction of Conjecture~\ref{conj-sep-shell} is known.  The ``if'' direction holds for $[\sig,\tau]$ of rank 3, since shellability of such $[\sig,\tau]$ is equivalent to connectivity of $(\sig,\tau)$.   As other evidence in favor of the ``if'' direction, we have found, by computer tests, that all such intervals with $\len{\tau}\leq9$ are Cohen-Macaulay.  A weaker condition than $[\sig,\tau]$ being Cohen-Macaulay is that the M\"obius function alternates in sign, i.e., the sign of every subinterval of $[\sig,\tau]$ is $(-1)^r$ where $r$ is the rank of the subinterval \cite[Prop.~3.8.11]{ec1,ec1e2}.  We have checked that if $[\sig,\tau]$ has no {non-trivial disconnected subintervals}, then the M\"obius function of $[\sig,\tau]$ alternates in sign whenever $\len{\tau}\leq 10$ and also for $\len{\sig}=7$ when $\len{\tau}=11$.

An obvious question is whether the proof of Theorem~\ref{thm-layered-shellable} could be extended to separable permutations.  As it happens, the proof of $(2) \Rightarrow (1)$ of Theorem~\ref{thm-layered-disconnected} goes through in the case of separable permutations, which follows from Lemma~\ref{lem-sum-disconn} and Corollary~\ref{cor-augmentation}, but we have been unable to characterize shellability in the case of intervals of separable permutations.

\begin{lemma}\label{lem-separable-disconnected}
Let $\sig$ and $\tau$ be separable permutations with $\len{\tau}-\len{\sig}\geq3$.  Suppose $\tau$ has a contiguous subword of contiguous letters that, after flattening, takes the form $\pi\op\pi$ with $\pi$ indecomposable or $\pi\om\pi$ with $\pi$ skew indecomposable.  Suppose $\sig$ is obtained from $\tau$ by removing one of these copies of {$\pi$}.  Then $(\sig,\tau)$ is disconnected.   
\end{lemma}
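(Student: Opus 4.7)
The plan is to induct on $|\tau|$, using Lemma \ref{lem-sum-disconn} for the base case and Corollary \ref{cor-augmentation} to propagate disconnectivity through the decomposition of $\tau$.  I will treat the case $M = \pi\oplus\pi$ with $\pi$ indecomposable throughout; the case $M = \pi\ominus\pi$ with $\pi$ skew indecomposable is symmetric, using parts (b) and (d) of Corollary \ref{cor-augmentation} in place of (a) and (c).

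The base case is when $\tau = M$, so that $\tau = \pi\oplus\pi$ and $\sigma = \pi$; then $|\tau|-|\sigma| \geq 3$ forces $|\pi| \geq 3$, and Lemma \ref{lem-sum-disconn} applies directly.  For the inductive step, $\tau$ is separable of length at least $2|\pi|+1$, hence either decomposable or skew decomposable.  Write $\tau = \tau_1 \oplus \cdots \oplus \tau_t$ if $\tau$ is decomposable and $\tau = \tau_1 \ominus \cdots \ominus \tau_t$ otherwise (using the finest such decomposition in each case), and split into two subcases according to how $M$ interacts with the components.

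The easier subcase is when $M$ is contained in a single component $\tau_k$.  Here $\sigma$ has the analogous (skew) decomposition with $\tau_k$ replaced by the permutation $\sigma'$ obtained from $\tau_k$ by removing one copy of $\pi$.  Since $\tau_k$ is separable with $|\tau_k| < |\tau|$ and still contains $M$ as a block, the induction hypothesis gives that $(\sigma', \tau_k)$ is disconnected.  Iteratively applying Corollary \ref{cor-augmentation} with each $\tau_j$ ($j \neq k$) as the augmenting permutation then transports disconnectivity up to $(\sigma, \tau)$.

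The main obstacle is the subcase in which $M$ spans several components of the (skew) decomposition.  The key step is to show that any $\tau_k$ meeting $M$ must in fact be entirely contained in $M$: a proper overlap $M \cap \tau_k$ would be a prefix or suffix of $\tau_k$'s positions whose values form the top or bottom of $\tau_k$'s value range, exhibiting $\tau_k$ as a nontrivial direct or skew sum and contradicting its (skew) indecomposability.  Hence $M$ is the (skew) sum of consecutive whole components $\tau_k \oplus \cdots \oplus \tau_m$.  When $\tau$ is decomposable, uniqueness of the finest decomposition together with the indecomposability of $\pi$ forces $m = k+1$ and $\tau_k = \tau_{k+1} = \pi$; starting from $(\pi, \pi\oplus\pi)$ disconnected by Lemma \ref{lem-sum-disconn} and augmenting via Corollary \ref{cor-augmentation}(a), (c) with $\tau_1 \oplus \cdots \oplus \tau_{k-1}$ on the left and $\tau_{k+2} \oplus \cdots \oplus \tau_t$ on the right then yields $(\sigma,\tau)$ disconnected.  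When $\tau$ is skew decomposable, this subcase does not arise at all: $M = \pi\oplus\pi$ is decomposable and hence skew indecomposable, so it cannot be written as a nontrivial skew sum of consecutive components.
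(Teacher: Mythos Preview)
Your argument is correct and is exactly the approach the paper has in mind: the text simply asserts that the result ``follows from Lemma~\ref{lem-sum-disconn} and Corollary~\ref{cor-augmentation},'' and your induction on $|\tau|$ along the separable decomposition is the natural way to unpack that sentence. The block argument showing that a partially overlapping component would be forced to (skew) decompose, and the observation that in the skew-decomposable case the direct sum $\pi\oplus\pi$ cannot straddle components, are precisely the details one needs to make the one-line proof rigorous.
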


We can also ask if the converse of Lemma~\ref{lem-separable-disconnected} is  true, although in the layered case, the corresponding statement was not needed in the proof of Theorem~\ref{thm-layered-shellable}.  

One difficulty of extending the proof of Theorem~\ref{thm-layered-shellable} seems to be that the idea of the rightmost embedding does not extend immediately to separable permutations.  For example, is 10002 or 01200 the ``correct'' rightmost embedding of 12 in 14532?

\subsection{Structural questions and the consecutive pattern poset}

Let us say that $\sig$ occurs in $\tau$ as a \emph{consecutive pattern} if there is a subsequence of \emph{consecutive} letters of $\tau$ that appear in the same relative order of size as those in $\sig$.  For example, 352 is an occurrence of the consecutive pattern 231 in 43521, whereas 452 is not.  The consecutive pattern poset $\cpp$ is then the obvious analogue of $\clp$ for consecutive patterns.  One indication that $\cpp$ is more tractable than $\clp$ is that every element of $\cpp$ covers at most two elements.  Another indication is that stronger results have been obtained on the structure of $\cpp$ than of $\clp$.  In particular, the M\"obius function of all intervals of $\cpp$ has been determined in \cite{bernini-ferrari-steingrimsson, sagan-willenbring}. 

The goal of \cite{elizalde-mcnamara} has been to see to what extent the results of the present work could be extended to $\cpp$ and, as one would hope and might even expect, stronger results can be obtained in the setting of $\cpp$.  The first is that the analogue of Theorem~\ref{thm-layered-shellable} is true for \emph{all} intervals in $\cpp$ not containing a disconnected subinterval, not just those formed by layered permutations.  The statement of Theorem~\ref{thm-disconnected-subinterval} carries through to $\cpp$ verbatim.  In addition, there are two results in \cite{elizalde-mcnamara} whose analogues in $\clp$ are open questions.  Let us call the number of elements of a given rank the \emph{size} of the rank.  The first result is that all intervals of $\cpp$ are rank-unimodal, meaning that the rank sizes read from bottom to top form a sequence of the form $a_0 \leq a_1 \leq \cdots \leq a_k \geq a_{k+1} \geq \cdots \geq a_{\ell}$.  We have checked the following corresponding assertion for all intervals $[\sig,\tau]$ in $\clp$ with $|\tau| \leq 8$.

\begin{conjecture}
Every interval $[\sig,\tau]$ in $\clp$ is rank-unimodal.
\end{conjecture}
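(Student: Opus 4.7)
The plan is to attempt a proof by constructing, for each interval $[\sigma,\tau]$, an injection between adjacent rank levels that progresses toward the conjectural peak.  By Lemmas~\ref{lem-zerosets} and~\ref{lem-samerun}, elements of rank $k$ are in bijection with run-equivalence classes of subsets $Z \subseteq \{1,\ldots,|\tau|\}$ of size $|\tau|-|\sigma|-k$ satisfying $\tau - Z \geq \sigma$.  I would first fix a canonical representative in each class (for instance, the one in which the positions of $Z$ are leftmost within every maximal run of $\tau$), thereby turning the comparison of rank sizes into a comparison of cardinalities of families of canonical zero-sets.

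The second step is a reduction to the case of indecomposable (and, dually, skew indecomposable) $\tau$.  When $\tau = \tau_1 \oplus \cdots \oplus \tau_t$ is decomposable, every $\pi \in [\sigma,\tau]$ takes the form $\pi = \rho_1 \oplus \cdots \oplus \rho_t$ along some embedding, with $\rho_m \leq \tau_m$, suggesting an identity in the spirit of Proposition~\ref{prop-mobius} that expresses the rank generating polynomial of $[\sigma,\tau]$ in terms of those of the intervals $[\sg_m,\tau_m]$ over decompositions $\sigma = \sg_1 \oplus \cdots \oplus \sg_t$.  Since products of polynomials with positive log-concave coefficients are themselves log-concave and hence unimodal, it would be natural to strengthen the target to log-concavity of the rank-size sequence $a_k$, which behaves better under products.

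For the remaining indecomposable case, the injection from rank $k$ to rank $k+1$ below the peak would take a canonical $Z$ and delete its leftmost element $z$ for which $\tau - (Z \setminus \{z\}) \geq \sigma$; such a $z$ exists whenever $k < |\tau|-|\sigma|$, since any element of $[\sigma,\tau]$ covering $\tau - Z$ supplies one.  A dual injection above the peak would add a carefully chosen position to $Z$.  Locating the peak itself remains nontrivial; Proposition~\ref{prop-chain} pinpoints the case when the entire interval is a chain (peak at $0$) and Proposition~\ref{prop-ranktwo} gives control near the top, but these are only starting points.

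The hard part will be verifying injectivity of the proposed map modulo run-equivalence.  Distinct canonical zero-sets $Z$ and $Z'$ can, after removing their leftmost deletable elements, land in run-equivalent sets, and engineering a canonical choice robust to such collisions appears to require new structural input --- in particular a sharper understanding of how embeddings of $\sigma$ in $\tau$ interact with the runs of $\tau$.  A secondary obstacle is that any analogue of Proposition~\ref{prop-mobius} for rank generating polynomials is likely to involve inclusion-exclusion signs, which obscure positivity.  As a concrete first case I would attempt the conjecture for layered $\tau$, where Theorem~\ref{thm-layered-disconnected} offers strong structural control and could serve as a testing ground for the general injection.
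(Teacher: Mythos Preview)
The statement you are attempting to prove is listed in the paper as an open \emph{conjecture} in Section~\ref{sec-open}; the authors give no proof and only report a computer check for $|\tau|\le 8$.  So there is nothing in the paper to compare your argument against, and what you have written is (as you yourself indicate in the final paragraph) a research plan rather than a proof.

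That said, one foundational step of your plan is already incorrect.  You assert that elements of rank $k$ in $[\sigma,\tau]$ are in bijection with ``run-equivalence classes'' of admissible zero-sets $Z$, citing Lemma~\ref{lem-samerun}.  But Lemma~\ref{lem-samerun} only governs \emph{singleton} deletions; for larger $Z$ the equivalence $\tau-Z=\tau-Z'$ is not generated by within-run swaps.  For instance, take $\tau=3142$, which has no nontrivial runs at all.  Deleting positions $\{1,3\}$ leaves the values $1,2$, while deleting $\{1,4\}$ leaves the values $1,4$; both flatten to $12$, yet $\{1,3\}$ and $\{1,4\}$ are not run-equivalent in any sense.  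So the canonical-representative scheme of your first paragraph does not parametrize rank levels, and the injection you build on top of it has no well-defined domain or codomain.

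The decomposition step has a similar defect, which you partly anticipate: an element $\pi\in[\sigma,\tau]$ with $\tau=\tau_1\oplus\cdots\oplus\tau_t$ need not decompose uniquely as $\rho_1\oplus\cdots\oplus\rho_t$ with $\rho_m\le\tau_m$, so the rank generating function of $[\sigma,\tau]$ is not a product (or even a signed sum of products) of those of the $[\sg_m,\tau_m]$ in any way that preserves log-concavity.  Proposition~\ref{prop-mobius} succeeds for the M\"obius function precisely because the alternating signs absorb the overcounting; there is no reason to expect the same cancellation to respect unimodality.  In short, the conjecture remains open, and the obstacles you flag at the end are genuine and currently unresolved.
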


To explain the second result from \cite{elizalde-mcnamara} that could possibly carry over to $\clp$, we need some definitions.  Recall that a ranked poset $P$ is said to be \emph{Sperner} if the size of the largest antichain equals the largest rank size.   A \emph{$k$-family} of $P$ is a union of $k$ antichains.  If $P$ has rank $r$ and $1 \leq k \leq r+1$, we say that $P$ is \emph{$k$-Sperner} if the sum of the $k$ largest rank sizes equals the size of the largest $k$-family.  Finally, $P$ is said to be \emph{strongly Sperner} if it is $k$-Sperner for all $k$.  Since all intervals in $\cpp$ are strongly Sperner \cite{elizalde-mcnamara}, is is natural to ask the following questions.

\begin{questions}
Are all intervals in $\clp$ Sperner?  If so, are they strongly Sperner?
\end{questions}

\bibliographystyle{abbrv}
\bibliography{topology-perm-poset}

\end{document}